\newcommand{\EF}{{\mathrm{eff}}}
\newcommand{\FF}{{\mathrm{ff}}}
\newcommand{\PM}{{\mathrm{pm}}}
\newcommand{\TR}{{\mathrm{tr}}}
\newcommand{\FP}{{\mathrm{fp}}}
\newcommand{\AV}{{\mathrm{ave}}}
\newcommand{\FU}{{\mathrm{+}}}
\newcommand{\RE}{{\mathrm{-}}}
\newtheorem{theorem}{Theorem}
\newtheorem{appendixtheorem}{Theorem}[section]
\renewcommand{\vec}[1]{{\ensuremath{\boldsymbol{\mathrm #1}}}}
\newcommand{\ten}[1]{\ensuremath{\boldsymbol{\mathsf{#1}}}}
\journal{Applied Mathematics and Computation}
\definecolor{Orange}{cmyk}{0.1,0.81,0.87,0}
\definecolor{MidnightBlue}{cmyk}{0.98,0.13,0,0.43}
\definecolor{RoyalBlue}{cmyk}{1,0.50,0,0}
\definecolor{DarkBlue}{cmyk}{1,1,0,0}
\definecolor{Red}{cmyk}{0,1,1,0}
\newcommand{\Rone}[1]{{\color{black}  {#1}}}
\newcommand{\Rthree}[1]{{\color{black}  {#1}}}
\begin{document}

\begin{frontmatter}

\title{Stokes--Brinkman--Darcy models for fluid--porous systems: \\derivation, analysis and validation}

\author[1]{Linheng Ruan}\ead{linheng.ruan@ians.uni-stuttgart.de}

\author[1]{Iryna Rybak\corref{cor2}}\ead{iryna.rybak@ians.uni-stuttagrt.de}

\affiliation[1]{organization={Institute of Applied Analysis and Numerical Simulation, University of Stuttgart},
            addressline={Pfaffenwaldring 57}, 
            city={Stuttgart},
            postcode={70569}, 
            country={Germany}}
\cortext[cor2]{Corresponding author}

\begin{abstract}
Flow interaction between a plain-fluid region in contact with a porous layer attracted significant attention from modelling and analysis sides due to numerous applications in biology, environment and industry. In the most widely used coupled model, fluid flow is described by the Stokes equations in the free-flow domain and Darcy’s law in the porous medium, and complemented by the appropriate interface conditions. However, traditional coupling concepts are restricted, with a few exceptions,  to one-dimensional flows parallel to the fluid--porous interface. 

In this work, we use an alternative approach to model interaction between the plain-fluid domain and porous medium by considering a transition zone, and propose the full- and hybrid-dimensional Stokes--Brinkman--Darcy models. In the first case, the equi-dimensional Brinkman equations are considered in the transition region, and the appropriate interface conditions are set on the top and bottom of the transition zone. In the latter case, we perform a dimensional model reduction by averaging the Brinkman equations in the normal direction and using the proposed transmission conditions. The well-posedness of both coupled problems is proved, and some numerical simulations are carried out in order to validate the concepts.
\end{abstract}



\begin{keyword}
fluid--porous interface \sep  porous medium  \sep Stokes equation \sep Brinkman equation \sep Darcy's law \sep MAC scheme


\MSC[2020] 35Q35 \sep 65N08 \sep 76B03\sep 76D07 \sep 76S05
\end{keyword}

\end{frontmatter}


\biboptions{sort&compress}

\section{Introduction}
\label{sec:intro}

Coupled fluid--porous systems have recently received substantial attention due to their various applications in the biology, environment and industry, such as transport of drugs in living tissues, subsurface drainage, and filtration systems. To avoid high computational complexity by resolving the pore scale geometry, the macroscale perspective is usually considered, where the Stokes equations are applied to describe laminar flow in the free-flow region and Darcy’s law is used to model slow flow through the porous layer. Flow and transport in such systems are highly interface-driven, therefore the choice of interface conditions plays a crucial role. In the last two decades, various coupling strategies have been developed to describe fluid behaviour between the free flow and the porous medium. They can be categorised as a sharp interface or a transition region approach~\cite{Goyeau_Lhuillier_etal_03, Angot_etal_17, Jaeger_etal_01}. There exist a number of works on coupling conditions at the sharp fluid--porous interface for the Stokes--Darcy models that are based on the Beavers--Joseph condition, e.g.~\cite{Beavers_Joseph_67, Discacciati_Quarteroni_09, Jaeger_Mikelic_00, Saffman, Cao_Gunzburger_etal_10, Discacciati_Miglio_Quarteroni_02, Bars_Worster_06, Girault-Riviere-09}.  However, most of them are only suitable for flows parallel to the interface~\cite{Eggenweiler_Rybak_20}. 

Several generalisations to the classical sharp interface model based on the Beavers--Joseph condition~\cite{Beavers_Joseph_67, Saffman} are proposed, many of them based on averaging techniques, e.g., asymptotic modelling, homogenisation, boundary layer theory, and upscaling~\Rone{\cite{Angot_etal_20, Zampogna_Bottaro_16, Naqvi_Bottaro_2021, Ahmed-bottaro-24, Eggenweiler_Rybak_MMS20, Strohbeck-Eggenweiler-Rybak-23, Lacis_Bagheri_17, Jaeger_Mikelic_09, Carraro_etal_15}, including boundary conditions on rough and porous surfaces~\cite{Lacis_etal_20, Marusic-Paloka_Pazanin_2022, Marusic-Paloka_Pazanin_2023}}. Some of these coupling conditions are only theoretically derived and determination of effective parameters appearing in these conditions is still needed to use them in numerical simulations, while others are not verified yet for multi-dimensional flows near the fluid--porous interface. Recently, the generalised coupling conditions are derived using the homogenisation and boundary layer theory~\cite{Eggenweiler_Rybak_MMS20} and rewritten in the same differential form as the Beavers--Joseph condition~\cite{Strohbeck-Eggenweiler-Rybak-23}. The parameters in these conditions are calculated using detailed geometric information on the pore scale and the coupling conditions are validated numerically against pore scale resolved simulations. These conditions extend the classical Beavers--Joseph condition and, in contrast to the original version, accurately capture arbitrary flow scenarios at the fluid--porous interface.

Alternatively, we can consider a narrow transition zone between the plain-fluid domain and the porous medium and derive the dimensionally reduced model~\cite{Angot_etal_17, Angot_etal_20,Jackson_Rybak_etal_12}. This approach has been actively used in the last years to model fluid behaviour in fractured porous media, e.g.~\cite{Martin_etal_05, Knabner_Roberts_14, gander-hennicker-masson-21, Lesinigo_etal_11, brenner-etal-2018, Rybak-Metzger-20, Formaggia_etal_14}. As a result, a hybrid-dimensional model is derived, where the equations in the transition zone are of co-dimension one. In contrast to the sharp interface approximation, dimensional model reduction allows storage and transport of thermodynamic properties in the tangential direction. \Rone{Recently, a coupling framework based on an overlapping domain decomposition method has been proposed in~\cite{Discacciati_Gervasio_24} to simulate the flow in the coupled system where the transition region between the free-fluid and the porous-medium domains is not explicitly defined. Thus, no additional model is introduced therein. This framework is suitable to accurately describe fluid flows in coupled systems with arbitrary flow directions. Another approach that accounts for the double-layer structure near the fluid–porous interface, leading to the Brinkman double-layer model, has been recently developed in~\cite{Kang_Wang_2024} using volume averaging. However, this investigation considers only flows that are parallel to the interface. 
Alternatively, coupled systems with a transition layer between the two flow domains can also be represented using a one-domain approach, as demonstrated in recent studies~\cite{Hernandez-Rodriguez_etal_2022, Arico_etal_2024}.}

In this paper, we propose an efficient alternative to the generalised interface conditions from~\cite{Eggenweiler_Rybak_MMS20}, \Rthree{which are valid for arbitrary flow directions,} and derive the \Rthree{full-dimensional and the} hybrid-dimensional Stokes--Brinkman--Darcy model\Rthree{s} based on our previous work~\cite{Ruan-Rybak-23}. In the full-dimensional model, we couple the Stokes equations in the plain-fluid domain and Darcy's law in the porous medium with the Brinkman equations in the transition region using the appropriate coupling conditions on the top and bottom~\cite{Ruan-Rybak-23, angot2018well}. On the top of the transition region, the interface conditions proposed in~\cite{Angot_etal_17, OchoaTapia_Whitaker_95, ValdesParada_etal_09} are taken into account, including the continuity of velocity and the stress jumps. On the bottom of the transition zone, we consider the classical set of interface conditions based on the Beavers--Joseph--Saffman approach~\cite{Beavers_Joseph_67, Discacciati_Quarteroni_09, Saffman}. The full-dimensional model is applicable for flow problems, where the size of the transition region plays a crucial role.

To derive a hybrid-dimensional model,  we assume a narrow transition zone between the two flow domains. We treat it as a lower-dimensional entity and call it the \emph{complex interface}. This approach extends the asymptotic modelling for multi-dimensional viscous fluid flow and convective transfer at the fluid--porous interface proposed in~\cite{Angot_etal_17}. To develop a dimensionally reduced model, we average the equations in the transition region in the normal direction and make \emph{a priori} hypotheses on the pressure and velocity profiles in a similar way it is done in~\cite{Lesinigo_etal_11, Rybak-Metzger-20} in order to derive the corresponding transmission conditions. We prove the well-posedness for the full-dimensional as well as the  dimensionally reduced Stokes--Brinkman--Darcy models to guarantee the proposed models have unique weak solutions. As a proof of the concept, we provide the numerical convergence study and compare both models with the analytical solutions to validate the models. \Rthree{We also present a comparative study between the full-dimensional and hybrid-dimensional Stokes--Brinkman--Darcy models with respect to accuracy and computational efficiency in the context of a filtration problem.} Thorough investigation of the proposed hybrid-dimensional Stokes--Brinkman--Darcy model and its intercomparison with alternative \Rthree{sharp interface concepts} available in the literature is beyond the scope of this work and is \Rthree{ presented in our recent manuscript~\cite{Ruan_Rybak_24}}.

The paper has the following structure. In section~\ref{sec:model}, we provide the model assumptions considered in this work and present the Stokes, the Brinkman and Darcy's models together with the corresponding interface conditions on the top and on the bottom of the transition region. In section~\ref{sec:model-dimred}, we derive the dimensionally reduced model and propose several transmission conditions required for the model closure. Section~\ref{sec:ana} is devoted to the well-posedness analysis of the coupled full-dimensional and reduced-dimensional models. Numerical results for the developed models are provided in section~\ref{sec:numresults}. Conclusions and future work follow in section~\ref{sec:conclusion}. Some auxiliary results can be found in~\ref{sec:appB}.

\section{Full-dimensional model}
\label{sec:model}
In this section, we present the Stokes, the Brinkman and Darcy's equations that serve as a basis for the full-dimensional formulation as well as for the derivation of the hybrid-dimensional counterpart. 
The entire flow domain $\overline{\Omega} = \overline{\Omega}_\FF \cup \overline{\Omega}_\TR \cup \overline{\Omega}_\PM\subset\mathbb{R}^2$ is composed of the free-flow subdomain $\Omega_\FF \subset\mathbb{R}^2$ and the porous medium $\Omega_\PM\subset\mathbb{R}^2$ divided by the transition region $\Omega_\TR\subset\mathbb{R}^2$ (Fig.~\ref{fig:Fig1intro}, left). The interfaces on the top of the transition zone, $\gamma_\FF=\overline{\Omega}_\FF \cap \overline{\Omega}_\TR \setminus \partial \Omega$, and on the bottom, $\gamma_\PM=\overline{\Omega}_\TR\cap \overline{\Omega}_\PM \setminus \partial \Omega$, are considered to be straight (Fig.~\ref{fig:Fig1intro}, left). We can model a narrow transition region of a thickness $d\in \mathbb{R}^+$ that is significantly smaller than the length of the coupled domain (Fig.~\ref{fig:Fig1intro}) as a complex interface $\gamma$. This interface is a lower-dimensional entity having co-dimension one. In this work, we make the following assumptions: we consider (i)~laminar ($Re \ll 1$), steady-state flow under isothermal conditions;  (ii) the fluid is incompressible with a constant dynamic viscosity $\mu \in \mathbb{R}^+$; (iii)~the transition zone and the porous layer are fully saturated.

Now we introduce the coupled full-dimensional model that includes the Stokes equations describing flow behaviour in the plain-fluid domain, the Brinkman equations in the transition region and Darcy's law in the porous layer. The appropriate interface conditions are chosen on the top and on the bottom of the transition zone, $\gamma_\FF$ and $\gamma_\PM$, respectively. We denote the fluid velocities and pressures as $(\vec{v}_i, p_i)$, $i\in\{\FF, \TR, \PM\}$ within the respective domains. 

\begin{figure}[h!]
    \centering
    \includegraphics[scale= 0.85]{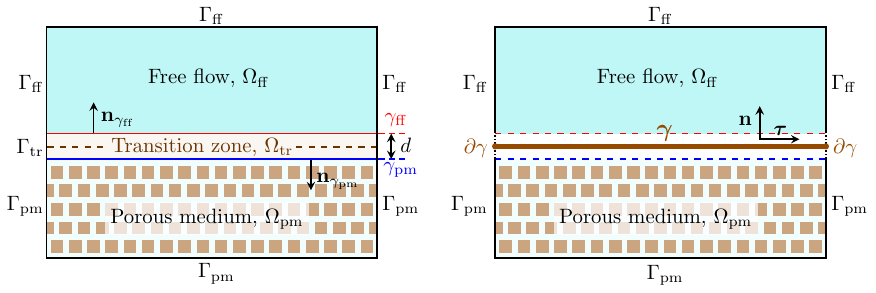}
    \caption{Coupled fluid--porous system with transition zone as full-dimensional inclusion (left) and interface of co-dimension one (right)}
    \label{fig:Fig1intro}
\end{figure}

\subsection{Stokes equations}
\label{sec:stokes}
Under the assumptions at the beginning of section~\ref{sec:model}, the flow in $\Omega_\FF$ is governed by the Stokes equations. The incompressibility condition is ensured through the mass conservation
\begin{equation}
    \nabla \cdot \vec{v}_\FF = 0 \quad \textnormal{in } \Omega_\FF. \label{equ:stokes1}
\end{equation}
For laminar flows, where convective acceleration is disregarded, the momentum conservation equations are derived by applying Newton's law
\begin{equation}
        -\nabla \cdot \ten{T}\left(\vec{v}_\FF, p_\FF\right) = \vec{f}_\FF \quad \textnormal{in } \Omega_\FF.\label{equ:stokes2}
\end{equation}
Here, $\ten{T}\left(\vec{v}_\FF,p_\FF\right) =  \mu \nabla \vec{v}_\FF-p_\FF\ten{I}$ is the viscous stress tensor, and  $\vec{f}_\FF$ is the source term, e.g., body force. On the outer boundary $\Gamma_\FF= \partial \Omega_\FF \setminus \gamma_\FF$, we consider
\begin{equation}
    \vec{v}_\FF = \overline{\vec{v}}_\FF \quad \textnormal{on } \Gamma_{D,\FF} ,\quad  \ten{T}\left(\vec{v}_\FF, p_\FF\right) \cdot \vec{n}_\FF = \overline{\vec{t}}_{\FF} \quad \textnormal{on } \Gamma_{N, \FF} , \label{equ:FFBC}
\end{equation}
where $\overline{\vec{v}}_\FF$, $\overline{\vec{t}}_{\FF}$ are given functions and $\vec{n}_\FF$ is the unit normal vector on the boundary pointing outward from $\Omega_\FF$. Note that there are two separate segments on $\Gamma_{\FF} = \Gamma_{D,\FF} \cup \Gamma_{N,\FF}$, where either the Dirichlet ($D$) or the Neumann ($N$) conditions are set, and $\Gamma_{D,\FF}\ne \emptyset$.

\subsection{Brinkman equations}
\label{sec:brinkman}
We consider the Brinkman equations to describe flow behaviour in the transition zone 
\begin{eqnarray}
    \nabla \cdot \vec{v}_\TR &=& 0 \quad \textnormal{in } \Omega_\TR, \label{equ:Brinkman1}\\
    \mu \ten{K}_\TR^{-1} \vec{v}_\TR-\nabla \cdot  \ten{T}_\EF\left(\vec{v}_\TR, p_\TR\right)  &=& \vec{f}_\TR  \hspace{+1.5ex} \textnormal{in } \Omega_\TR, \label{equ:Brinkman2}
\end{eqnarray}
where $\ten{T}_\EF \left(\vec{v}_\TR,p_\TR\right) = \mu_\EF \nabla \vec{v}_\TR-p_\TR\ten{I}$ represents the stress with the effective viscosity $\mu_\EF \in \mathbb{R}^+$. The transition region permeability $\ten{K}_\TR$ is the second-order tensor which is supposed to be symmetric and positive definite. \Rthree{In this work, we consider homogeneous transition region having constant permeability.} On the outer boundary $\Gamma_\TR= \partial \Omega_\TR \setminus \{ \gamma_\FF \cup \gamma_\PM\} $, the following boundary conditions
\begin{equation}
        \vec{v}_\TR = \overline{\vec{v}}_\TR \quad \textnormal{on } \Gamma_{D,\TR}, \quad \ten{T}_\EF\left(\vec{v}_\TR, p_\TR\right) \cdot \vec{n}_\TR= \overline{\vec{t}}_\TR \quad \textnormal{on } \Gamma_{N,\TR}  \label{equ:TRBC}
\end{equation}
with the given functions $\overline{\vec{v}}_\TR$, $ \overline{\vec{t}}_\TR$ are imposed, where  $\vec{n}_\TR$ is the unit outward normal vector on  $\partial \Omega_\TR$ and $\Gamma_\TR = \Gamma_{D,\TR} \cup \Gamma_{N,\TR}$, $\Gamma_{D,\TR}\cap \Gamma_{N,\TR} = \emptyset$.

\subsection{Darcy's law}
\label{sec:darcy}
To describe slow flow through porous media, Darcy's law is  applied 
\begin{eqnarray}
    \nabla \cdot \vec{v}_\PM &=& q \hspace{+11.7ex}\textnormal{in } \Omega_\PM,\label{equ:Darcy1}  \\
    \vec{v}_\PM &=& -\frac{\ten{K}_\PM}{\mu} \nabla p_\PM   \hspace{2.5ex}\textnormal{in } \Omega_\PM,
\label{equ:Darcy2}\end{eqnarray}
with the symmetric, positive definite and bounded permeability $\ten{K}_\PM$ and the source term $q$. \Rthree{The porous medium has a lower or equal permeability than the transition zone.}
Substituting Eq.~\eqref{equ:Darcy2} into Eq.~\eqref{equ:Darcy1}, we formulate the porous-medium model in the primal form
\begin{eqnarray}
    -\nabla \cdot\left(\frac{\ten{K}_\PM}{\mu} \nabla p_\PM\right) = q. \label{equ:darcyupdated}
\end{eqnarray}
We set the following boundary conditions 
\begin{equation}
     p_\PM = \overline{p}_\PM \quad \textnormal{on } \Gamma_{D,\PM}, \quad \vec{v}_\PM \cdot \vec{n}_\PM = \overline{v}_\PM\quad \textnormal{on } \Gamma_{N,\PM} ,\label{equ:PMBC}
\end{equation}
on the outer boundary $\Gamma_\PM= \partial \Omega_\PM \setminus \gamma_\PM=\Gamma_{D,\PM} \cup \Gamma_{N,\PM}$, 
where $\overline{p}_\PM$ and $ \overline{v}_\PM$ are given functions,  $\vec{n}_\PM$ is the unit outward normal vector on $\partial \Omega_\PM$, and 
$\Gamma_{D,\PM}\cap \Gamma_{N,\PM} = \emptyset$.

\subsection{Interface conditions}
\label{sec:ICfulldim}
To complete the formulation of the coupled full-dimensional model, it is necessary to choose appropriate interface conditions on the interfaces $\gamma_\FF$ and $\gamma_\PM$.
On the top of the transition zone, the continuity of velocity
\begin{equation}
        \vec{v}_\FF = \vec{v}_\TR \quad \textnormal{on } \gamma_\FF,\label{equ:continuityICFF}
\end{equation}
and the stress jump conditions~\cite{Angot_etal_20}:
\begin{equation}
    \ten{T}\left(\vec{v}_\FF, p_\FF\right) \cdot \vec{n}_{\gamma_\FF}-\ten{T}_\EF\left(\vec{v}_\TR, p_\TR\right) \cdot \vec{n}_{\gamma_\FF} = \frac{\mu}{\sqrt{K_\TR}} \vec{\beta} \vec{v}_\FF \quad \textnormal{on }\gamma_\FF \label{equ:stress-jump}
\end{equation}
are taken into account, where $\vec{n}_{\gamma_\FF}$ is the unit normal vector on $\gamma_\FF$ pointing from $\Omega_\TR$ to $\Omega_\FF$ (Fig.~\ref{fig:Fig1intro}, left), $ \vec{\beta}$ denotes the friction tensor and $K_\TR:= \|\ten{K}_\TR \|_{\infty}$. We stress that $\vec{\beta}$ is symmetric and positive semi-definite.

On the bottom of the transition zone, continuity of the normal velocity and the balance of forces are prescribed 
\begin{eqnarray}
        \vec{v}_\TR \cdot \vec{n}_{\gamma_\PM} &=& \vec{v}_\PM \cdot \vec{n}_{\gamma_\PM} \quad \textnormal{on } \gamma_\PM, \label{equ:mass-conservationICPM}\\
        - \vec{n}_{\gamma_\PM} \cdot \ten{T}_\EF\left(\vec{v}_\TR, p_\TR\right) \cdot \vec{n}_{\gamma_\PM}&=& p_\PM \hspace{+7.4ex} \textnormal{on } \gamma_\PM,\label{equ:forcebalanceICPM}
\end{eqnarray}
where the normal vector $\vec{n}_{\gamma_\PM}$ on $\gamma_\PM$ points from $\Omega_\TR$ to $\Omega_\PM$  (Fig.~\ref{fig:Fig1intro}, left). The tangential velocity component is described by the Beavers--Joseph--Saffman condition~\cite{Beavers_Joseph_67,Saffman}:
\begin{equation}
    \vec{v}_\TR\cdot \vec{\tau}_{\gamma_\PM}= - \frac{\sqrt{K_\PM}}{\alpha } \frac{\partial \vec{v}_\TR}{\partial \vec{n}_{\gamma_\PM}}\cdot \vec{\tau}_{\gamma_\PM} \quad \textnormal{on }\gamma_\PM, \label{equ:BJS-ICPM}
\end{equation}
where $\alpha \in \mathbb{R}^+$ is the slip coefficient, $\vec{\tau}_{\gamma_\PM}$ is the unit tangential vector on $\gamma_\PM$ and $K_\PM := \vec{\tau}_{\gamma_\PM} \cdot \ten{K}_\PM\cdot \vec{\tau}_{\gamma_\PM}$.

To summarise, the full-dimensional formulation is composed of the Stokes model~\eqref{equ:stokes1}, \eqref{equ:stokes2} in the plain-fluid subdomain, the equi-dimensional Brinkman equations \eqref{equ:Brinkman1}, \eqref{equ:Brinkman2} in the transition zone, and Darcy's law \eqref{equ:Darcy1}, \eqref{equ:Darcy2} in the porous layer supplemented by the interface conditions \eqref{equ:continuityICFF}--\eqref{equ:BJS-ICPM} on the top and on the bottom of the transition zone. 

\section{Dimensionally reduced model}
\label{sec:model-dimred}
As mentioned in section~\ref{sec:model}, the narrow transition zone can be efficiently modelled as an interface $\gamma$ of co-dimension one (Fig.~\ref{fig:Fig1intro}, right). The transition zone is given by $\displaystyle \Omega_\TR =  \left\{\vec{x} \in \mathbb{R}^2 \big| \, \vec{x} = \vec{s} +1/2 \, \xi \, d(\vec{s}) \vec{n}, \, \vec{s}\in\gamma, \xi \in [-1,1] \right\}$. 
At each point $\vec{s} \in \gamma$, the thickness $d$ is uniquely determined. We can assume sufficiently smooth interface $\gamma$ and thus define the coordinate system with the unit normal $\vec{n}$ and tangential vectors $\vec{\tau}$ (Fig.~\ref{fig:Fig1intro}, right). The corresponding coordinates are denoted by $n$ and $s$, respectively. We note that $\vec{n}_{\gamma_\FF} = \vec{n}$ in Eq.~\eqref{equ:stress-jump}, $\vec{n}_{\gamma_\PM} = -\vec{n}$ in Eqs.~\eqref{equ:mass-conservationICPM}, \eqref{equ:forcebalanceICPM}, and $\vec{\tau}_{\gamma_\PM} = -\vec{\tau}$ in Eq.~\eqref{equ:BJS-ICPM}.  To avoid unnecessary technical difficulties, we restrict ourselves to the case of a constant thickness $d = d(\vec{s})$ and a straight interface $\gamma$ in this work. We will denote the velocity and pressure defined on $\gamma_\FF$ and $\gamma_\PM$ in the reduced model using the same notation as in the full-dimensional formulation.

\subsection{Averaged Brinkman equations}
\label{sec:derivation-dimred}
To derive the hybrid-dimensional model, the Brinkman equations \eqref{equ:Brinkman1}, \eqref{equ:Brinkman2} are projected into the normal and tangential directions on the interface~$\gamma$ and averaged across $\Omega_\TR$. First, we average the mass conservation equation~\eqref{equ:Brinkman1} in the normal direction that yields
\begin{eqnarray}
    \vec{v}_\TR\cdot \vec{n}| _{\gamma_\FF} - \vec{v}_\TR\cdot \vec{n} | _{\gamma_\PM} + d \frac{\partial V_\vec{\tau} }{\partial \vec{\tau}} = 0,\label{equ:integratingMass}
    \qquad V_{\vec{\tau}} := \frac{1}{d}  \int^{d/2}_{-d/2} \vec{v}_\TR \cdot \vec{\tau}\,\mathrm{d} n,
\end{eqnarray}
where $V_{\vec{\tau}}$ is the averaged tangential velocity. Then, we substitute the mass conservation equations~\eqref{equ:continuityICFF}, \eqref{equ:mass-conservationICPM} across the top and bottom boundaries of the transition zone into Eq.~\eqref{equ:integratingMass} and obtain 
\begin{equation}
    \vec{v}_\FF \cdot \vec{n} | _{\gamma_\FF} - \vec{v}_\PM \cdot \vec{n} | _{\gamma_\PM} + d \frac{\partial  V_{\vec{\tau}} }{\partial \vec{\tau}}= 0 \quad \textnormal{on } \gamma. \label{equ:averagedMass}
\end{equation}
Note that in this case we obtain the conservation of mass on the complex interface $\gamma$ with the additional source terms $\vec{v}_\FF \cdot \vec{n} | _{\gamma_\FF}$ and $\vec{v}_\PM \cdot \vec{n} | _{\gamma_\PM}$ coming  from the plain-fluid domain and from the porous layer, respectively, that yields transport of mass along $\gamma$. For the case there exists no transition zone $(d=0)$, we get the classical mass conservation condition across the fluid--porous interface 
\begin{eqnarray*}
        \vec{v}_\FF \cdot \vec{n} | _{\gamma_\FF} - \vec{v}_\PM \cdot \vec{n} | _{\gamma_\PM} = 0.
\end{eqnarray*}

Following the same procedure, we derive the averaged momentum conservation equations. The Brinkman equations \eqref{equ:Brinkman2} are projected into the local orthogonal reference system that yields
\begin{eqnarray}
    \mu\vec{M}_{\vec{n}\vec{n}} \left(\vec{v}_\TR \cdot \vec{n}\right) + \mu\vec{M}_{\vec{n}\vec{\tau}}  \left(\vec{v}_\TR \cdot\vec{\tau}\right) -\left(\nabla\cdot \ten{T}_\EF \left(\vec{v}_\TR, p_\TR\right)\right) \cdot \vec{n} &=& \vec{f}_\TR \cdot \vec{n}, \label{equ:BrinkmanMomentumnormal}\\
    \mu\vec{M}_{\vec{\tau}\vec{n}} \left(\vec{v}_\TR \cdot\vec{n}\right)+ \mu\vec{M}_{\vec{\tau}\vec{\tau}} \left(\vec{v}_\TR \cdot\vec{\tau}\right) -\left(\nabla\cdot \ten{T}_\EF \left(\vec{v}_\TR, p_\TR\right)\right) \cdot \vec{\tau}&=& \vec{f}_\TR \cdot \vec{\tau}, \label{equ:BrinkmanMomentumtangential}
\end{eqnarray}
where $\vec{M}_{\vec{a}\vec{b}} := \vec{a}^\top \ten{K}_\TR^{-1}\vec{b} \in \mathbb{R}$ for the vectors $\vec{a}, \vec{b}\in \mathbb{R}^2$.
Integrating Eq.~\eqref{equ:BrinkmanMomentumnormal} in the normal direction, we get 
\begin{eqnarray}
         -\left(\mu_\EF \frac{\partial \vec{v}_\TR}{\partial \vec{n}}\cdot \vec{n}-p_\TR\right)\Bigg|_{\gamma_\FF}  + \left(\mu_\EF \frac{\partial \vec{v}_\TR}{\partial \vec{n}}\cdot \vec{n} - p_\TR\right)\Bigg|_{\gamma_\PM}  
         = d \left(F_\vec{n} - \mu \vec{M}_{\vec{n} \vec{n}} V_{\vec{n}}- \mu \vec{M}_{\vec{n}\vec{\tau}} V_\vec{\tau} + \mu_\EF \frac{\partial^2 V_\vec{n}}{\partial \vec{\tau}^2}\right), \label{equ:averagedMomentumnormaloriginal}
\end{eqnarray}
where the averaged normal velocity 
$V_{\vec{n}}:= \frac{1}{d}  \int^{d/2}_{-d/2} \vec{v}_\TR \cdot \vec{n}\,\mathrm{d} n $
and the averaged source term $F_{\vec{n}}:= \frac{1}{d}  \int^{d/2}_{-d/2} \vec{f}_\TR\cdot \vec{n}\,\mathrm{d} n $ are defined. 
Taking into account the normal component of the stress jump condition~\eqref{equ:stress-jump} on $\gamma_\FF$ and the balance of normal forces \eqref{equ:forcebalanceICPM} on $\gamma_\PM$, we obtain
\begin{eqnarray}
         \bigg(-\mu \frac{\partial \vec{v}_\FF}{\partial \vec{n}}\cdot \vec{n}+p_\FF +  \frac{\mu }{\sqrt{K_\TR}} \left(\vec{\beta} \vec{v}_\FF\right)  \cdot \vec{n} \bigg)\bigg|_{\gamma_\FF}  - p_\PM \big|_{\gamma_\PM} = d \left(F_{\vec{n}} - \mu\vec{M}_{\vec{n}\vec{n}} V_{\vec{n}} - \mu \vec{M}_{\vec{n}\vec{\tau}} V_{\vec{\tau}} + \mu_\EF \frac{\partial^2 V_{\vec{n}}}{\partial \vec{\tau}^2}\right)\quad \textnormal{on }\gamma.\label{equ:averagedMomentumnormal}
\end{eqnarray}
We note that Eq.~\eqref{equ:averagedMomentumnormal} is the normal component of the momentum conservation equation on the interface $\gamma$. As above, when $d=0$ (no transition zone), we recover from Eq.~\eqref{equ:averagedMomentumnormal} the normal component of the stress jump condition between the free flow and the porous medium~\cite{angot2018well}:
\begin{eqnarray*}
         -\vec{n} \cdot \ten{T} \left(\vec{v}_\FF, p_\FF\right) \cdot \vec{n}\big|_{\gamma_\FF}+ \frac{\mu }{\sqrt{K_\TR}} \left(\vec{\beta} \vec{v}_\FF\right)\cdot\vec{n}  \big|_{\gamma_\FF} &=& p_\PM \big|_{\gamma_\PM} .
\end{eqnarray*}
Analogously, integrating Eq.~\eqref{equ:BrinkmanMomentumtangential} in the normal direction, we have
\begin{eqnarray}
    - \mu_\EF \frac{\partial \vec{v}_\TR}{\partial \vec{n}}\cdot \vec{\tau} \bigg|_{\gamma_\FF} + \mu_\EF \frac{\partial \vec{v}_\TR}{\partial \vec{n}}\cdot \vec{\tau} \bigg|_{\gamma_\PM}
    =  d \left( F_\vec{\tau}- \mu \vec{M}_{\vec{\tau}\vec{n}} V_{\vec{n}} -  \mu \vec{M}_{\vec{\tau}\vec{\tau}} V_\vec{\tau} +  \mu_\EF \frac{\partial^2 V_\vec{\tau}}{\partial \vec{\tau}^2} - \frac{\partial P}{\partial \vec{\tau}} \right).
\label{equ:averagedMomentumtangentialoriginal} \end{eqnarray}
Here, the averaged pressure in the transition region is given by $P:= \frac{1}{d}  \int^{d/2}_{-d/2} p_\TR \,\mathrm{d} n $ and the averaged source term is $F_{\vec{\tau}}:= \frac{1}{d}  \int^{d/2}_{-d/2} \vec{f}_\TR\cdot \vec{\tau}\,\mathrm{d} n$. Considering the tangential component of the stress jump condition~\eqref{equ:stress-jump} on $\gamma_\FF$ and the Beavers--Joseph--Saffman condition \eqref{equ:BJS-ICPM} on $\gamma_\PM$ in Eq.~\eqref{equ:averagedMomentumtangentialoriginal}, we obtain
\begin{eqnarray}
    \left(-\mu \frac{\partial \vec{v}_\FF}{\partial \vec{n}}\cdot \vec{\tau} + \frac{\mu }{\sqrt{K_\TR}} \left(\vec{\beta}\vec{v}_\FF\right)\cdot \vec{\tau}  \right)\Bigg|_{\gamma_\FF} + \frac{\alpha \mu_\EF }{\sqrt{K_\PM}} \vec{v}_\TR \cdot \vec{\tau} |_{\gamma_\PM}
    = d \left( F_{\vec{\tau}}- \mu \vec{M}_{\vec{\tau}\vec{n}} V_{\vec{n}} -  \mu \vec{M}_{\vec{\tau}\vec{\tau}} V_{\vec{\tau}} +  \mu_\EF \frac{\partial^2 V_{\vec{\tau}}}{\partial \vec{\tau}^2} - \frac{\partial P}{\partial \vec{\tau}} \right)\quad \textnormal{on }\gamma.\label{equ:averagedMomentumtangential}   
\end{eqnarray}
Note that Eq.~\eqref{equ:averagedMomentumtangential} is the tangential component of the momentum conservation on the complex interface $\gamma$, where a closure condition to express the tangential velocity $\vec{v}_\TR \cdot \vec{\tau} |_{\gamma_\PM}$ is required (see section~\ref{sec:transcondition}).

\subsection{Transmission conditions}
\label{sec:transcondition}
A closed formulation of the hybrid-dimensional model requires pressure and velocity on the interfaces $\gamma_\FF$ and $\gamma_\PM$ to derive the transmission conditions. Since it is not possible to extrapolate the pressure and velocity values on the interfaces in the reduced-dimensional model, we express $\left(\vec{v}_\TR, p_\TR\right)$ on $\gamma_\FF$ and $\gamma_\PM$ in terms of $\left(\vec{V},P\right)$ and $\left(\vec{v}_i,p_i\right)|_{\gamma_i}$ for $i \in \{\FF,\PM\}$, where the averaged velocity vector is given by $\vec{V}: = V_\vec{n} \vec{n} + V_{\vec{\tau}} \vec{\tau}$.

Following the ideas from \cite{Lesinigo_etal_11, Rybak-Metzger-20}, we make some \emph{a priori} hypotheses on the pressure and velocity profiles in~$\Omega_\TR$. The profiles are given by the interpolation of the interface values on $\gamma_\FF$ and $\gamma_\PM$. In this work, we assume a constant pressure profile 
\begin{eqnarray}
    p_\TR |_{\gamma_\FF} =p_\TR |_{\gamma_\PM} = P. \label{equ:constantpressure} 
\end{eqnarray}
According to~\cite{zimmerman1996hydraulic} a parabolic tangential velocity is expected in the parallel plate model. Similarly, we assume that the tangential velocity has a quadratic profile across the transition zone. Knowing the velocity profile across $\Omega_\TR$, we can express $\vec{v}_\TR \cdot {\vec{\tau}} |_{\gamma_\PM}$ in Eq.~\eqref{equ:averagedMomentumtangential} in terms of $V_{\vec{\tau}}$ and $\vec{v}_\FF \cdot \vec{\tau} |_{\gamma_\FF}$. Following a similar procedure as in \cite[Appendix~A]{Rybak-Metzger-20} and taking into consideration continuity of the tangential component in~\eqref{equ:continuityICFF} and the Beavers--Joseph--Saffman condition~\eqref{equ:BJS-ICPM}, the following closure relations for the tangential velocity are obtained
\begin{eqnarray}
        \vec{v}_\TR \cdot {\vec{\tau}} |_{\gamma_\PM} &=& \frac{2\sqrt{ K_\PM} \left(3V_{\vec{\tau}} - \vec{v}_\FF \cdot \vec{\tau} |_{\gamma_\FF}\right)}{\alpha d+4\sqrt{K_\PM}}, \label{equ:closure_tangential1}\\
        \frac{\partial \vec{v}_{\TR}}{\partial \vec{n}}\cdot\vec{\tau}\bigg|_{\gamma_\FF}&=&\frac{ -6\left(\alpha d + 2\sqrt{K_\PM}\right)V_{\vec{\tau}}+ 4\left(\alpha d + 3\sqrt{K_\PM}\right)\vec{v}_\FF \cdot \vec{\tau}|_{\gamma_\FF}}{d\left(\alpha d+4\sqrt{K_\PM}\right)}\label{equ:closure_tangential2}.
\end{eqnarray}
Substituting Eq.~\eqref{equ:closure_tangential1} into Eq.~\eqref{equ:averagedMomentumtangential}, we complete the expression for the tangential component of momentum conservation on the complex interface $\gamma$:
\begin{eqnarray}
    \left(-\mu \frac{\partial \vec{v}_\FF}{\partial \vec{n}}\cdot \vec{\tau} + \frac{\mu }{\sqrt{K_\TR}} \vec{\tau} \cdot \vec{\beta}\vec{v}_\FF\right)\Bigg|_{\gamma_\FF} +   \frac{ \alpha \mu_\EF \left(6V_{\vec{\tau}} - 2\vec{v}_\FF \cdot \vec{\tau} |_{\gamma_\FF}\right)}{\alpha d+4\sqrt{K_\PM}}
    = d \left( F_{\vec{\tau}}- \mu \vec{M}_{\vec{\tau}\vec{n}} V_{\vec{n}} -  \mu \vec{M}_{\vec{\tau}\vec{\tau}} V_{\vec{\tau}} +  \mu_\EF \frac{\partial^2 V_{\vec{\tau}}}{\partial \vec{\tau}^2} - \frac{\partial P}{\partial \vec{\tau}} \right) \label{equ:averagedMomentumtangentialupdate}.   
\end{eqnarray}

For the normal velocity component, 
\Rthree{it is difficult to justify the profile purely on physical grounds. Therefore, }
we consider different assumptions: linear, piecewise linear, and quadratic profiles. The corresponding closure conditions for these profiles are summarised in Table~\ref{tab:closureconditionnormal}. 

We can rewrite them in a general form as
\begin{eqnarray}
        d \frac{\partial \vec{v}_\TR}{\partial \vec{n}}\cdot\vec{n}\bigg|_{\gamma_\FF} \hspace{+0.6ex}= - \left(\lambda_1 +\lambda_2\right)V_\vec{n}+\lambda_1\vec{v}_\FF \cdot\vec{n}|_{\gamma_\FF}+\lambda_2\vec{v}_\PM\cdot\vec{n}|_{\gamma_\PM},\hspace{-0.25ex}\label{equ:generalnormalclosure-1}\\
        d\frac{\partial \vec{v}_\TR}{\partial \vec{n}}\cdot\vec{n}\bigg|_{\gamma_\PM} =\hspace{+1.8ex}\left(\lambda_1 + \lambda_2\right)V_\vec{n}-\lambda_2\vec{v}_\FF\cdot\vec{n}|_{\gamma_\FF}-\lambda_1\vec{v}_\PM\cdot\vec{n}|_{\gamma_\PM}, \label{equ:generalnormalclosure-2}
\end{eqnarray}
where $\lambda_1>\lambda_2\ge0$ are non-dimensional parameters dependent on the assumption of the normal velocity profile.
\begin{table}[h!]
    \centering
\renewcommand{\arraystretch}{1.2}
\begin{tabular}{c c} 
\hline 
Profiles &  Closure conditions\\
\hline 
$\begin{array}{cc}
     \textnormal{Linear}  \\
     (\lambda_1=2, \lambda_2=0) 
\end{array} $
& \hspace{+4ex}
$\begin{array}{cc}
     \frac{\partial \vec{v}_\TR}{\partial \vec{n}}\cdot\vec{n}\big|_{\gamma_\FF} \hspace{+0.6ex}= -\frac{1}{d}\left(2V_\vec{n}-2\vec{v}_\FF \cdot \vec{n}|_{\gamma_{\FF}} \right) \hspace{+12.3ex} \\
     \frac{\partial \vec{v}_\TR}{\partial \vec{n}}\cdot\vec{n}\big|_{\gamma_\PM}  = \hspace{+1.3ex}\frac{1}{d}\left(2V_\vec{n}- 2\vec{v}_\PM \cdot \vec{n}|_{\gamma_{\PM}}\right)\hspace{+10.7ex}
\end{array}$ \\
\hline
$\begin{array}{cc}
     \textnormal{Piecewise linear}  \\
     (\lambda_1=3,  \lambda_2=1) 
\end{array} $
& \hspace{+4ex}
$\begin{array}{cc}
     \frac{\partial \vec{v}_\TR}{\partial \vec{n}}\cdot\vec{n}\big|_{\gamma_\FF} \hspace{+0.6ex}= -\frac{1}{d}\left(4V_\vec{n}-3\vec{v}_\FF \cdot\vec{n}|_{\gamma_\FF}-\vec{v}_\PM\cdot\vec{n}|_{\gamma_\PM}\right) \hspace{+0.7ex} \\
      \frac{\partial \vec{v}_\TR}{\partial \vec{n}}\cdot\vec{n}\big|_{\gamma_\PM}= \hspace{+1.3ex}\frac{1}{d}\left(4V_\vec{n}-\vec{v}_\FF\cdot\vec{n}|_{\gamma_\FF}-3\vec{v}_\PM\cdot\vec{n}|_{\gamma_\PM}\right)\hspace{+0.7ex}
\end{array}$\\
\hline
$\begin{array}{cc}
     \textnormal{Quadratic}  \\
     (\lambda_1=4,  \lambda_2=2) 
\end{array} $
 & \hspace{+4ex}
 $\begin{array}{cc}
      \frac{\partial \vec{v}_\TR}{\partial \vec{n}}\cdot\vec{n}\big|_{\gamma_\FF}\hspace{+0.6ex} = -\frac{1}{d}\left(6V_\vec{n}-4\vec{v}_\FF \cdot\vec{n}|_{\gamma_\FF}-2\vec{v}_\PM\cdot\vec{n}|_{\gamma_\PM}\right) \hspace{-0.7ex}  \\
      \frac{\partial \vec{v}_\TR}{\partial \vec{n}}\cdot\vec{n}\big|_{\gamma_\PM}  =\hspace{+1.3ex} \frac{1}{d}\left(6V_\vec{n}-2\vec{v}_\FF\cdot\vec{n}|_{\gamma_\FF}-4\vec{v}_\PM\cdot\vec{n}|_{\gamma_\PM}\right) \hspace{-0.8ex}
 \end{array}$\\
\hline
\end{tabular}
    \caption{Closure conditions depending on the normal velocity profile in the transition zone and the corresponding parameters $\lambda_1$ and $\lambda_2$}
    \label{tab:closureconditionnormal}
\end{table}

Substituting Eqs.~\eqref{equ:constantpressure}, \eqref{equ:closure_tangential2} and \eqref{equ:generalnormalclosure-1} into Eq.~\eqref{equ:stress-jump}, we obtain the transmission conditions on~$\gamma_\FF$:
\begin{eqnarray}
    \vec{n}\cdot \ten{T}\left(\vec{v}_\FF, p_\FF \right) \cdot \vec{n} |_{\gamma_\FF} &=& -\frac{\mu_\EF \left(\lambda_1+\lambda_2\right)}{d} V_\vec{n} -P + \frac{\mu_\EF \lambda_1}{d} \vec{v}_\FF\cdot \vec{n} |_{\gamma_\FF}
      + \frac{\mu_\EF \lambda_2 }{d} \vec{v}_\PM\cdot \vec{n}|_{\gamma_\PM}
      +\frac{\mu }{\sqrt{K_{\TR}}} \left(\vec{\beta}\vec{v}_\FF \right)\cdot\vec{n}|_{\gamma_\FF}  , \label{equ:transmissionFFnormal}\\
    \vec{n}\cdot \ten{T}\left(\vec{v}_\FF, p_\FF \right) \cdot \vec{\tau} |_{\gamma_\FF}&=& - \frac{ \mu_\EF\left(6 \alpha d + 12\sqrt{K_\PM}\right) }{d\left(\alpha d+4\sqrt{K_\PM}\right)} V_\vec{\tau} +\frac{  \mu_\EF \left(4 \alpha d + 12\sqrt{K_\PM}\right) }{d\left( \alpha d+4\sqrt{K_\PM} \right)}\vec{v}_\FF \cdot \vec{\tau}|_{\gamma_\FF} + \frac{\mu }{\sqrt{K_\TR}} \left(\vec{\beta}\vec{v}_\FF \right) \cdot \vec{\tau}|_{\gamma_\FF}. \label{equ:transmissionFFtangential}
\end{eqnarray}
Analogously, substitution of Eqs.~\eqref{equ:constantpressure} and \eqref{equ:generalnormalclosure-2} into Eq.~\eqref{equ:forcebalanceICPM} yields the transmission condition on $\gamma_\PM$:
\begin{eqnarray}
    p_\PM |_{\gamma_\PM} &=& -\frac{\mu_\EF}{d} \left(\left(\lambda_1 + \lambda_2\right)V_\vec{n}-\lambda_2\vec{v}_\FF\cdot\vec{n}|_{\gamma_\FF}-\lambda_1\vec{v}_\PM\cdot\vec{n}|_{\gamma_\PM}\right) + P.\hspace{+13ex}
    \label{equ:transmissionPM} 
\end{eqnarray}
To summarise, the proposed hybrid-dimensional model consists of the full-dimensional Stokes equations~\eqref{equ:stokes1}, \eqref{equ:stokes2} in~$\Omega_\FF$, the full-dimensional Darcy law~\eqref{equ:Darcy1}, \eqref{equ:Darcy2} in~$\Omega_\PM$, the dimensionally reduced Brinkman equations \eqref{equ:averagedMass}, \eqref{equ:averagedMomentumnormal},  \eqref{equ:averagedMomentumtangentialupdate} on~$\gamma$ and the transmission conditions \eqref{equ:transmissionFFnormal}--\eqref{equ:transmissionPM}.

\section{Well-posedness of coupled models}
\label{sec:ana}
In this section, we prove existence and uniqueness of weak solutions for the coupled full- and hybrid-dimensional models developed in sections \ref{sec:model} and \ref{sec:model-dimred}.
We assume the permeability tensors $\ten K_\TR$, $\ten K_\PM$ and the friction tensor $\vec{\beta}$ to be uniformly elliptic
\begin{eqnarray}
    k_{\min,i} \| \vec{x} \|^2  \le \vec{x} \cdot \ten{K}_i \cdot \vec{x} \le k_{\max,i} \| \vec{x} \|^2, \quad &\forall \vec{x} \in \mathbb{R}^2, & i\in\{\TR,\PM\},\label{equ:boundedK}\\
    \beta_{\min} \|\vec x \|^2  \le \, \vec x \cdot \vec{\beta} \cdot \vec x  \,\le \beta_{\max} \|\vec x \|^2,\hspace{+0.6ex} \quad  &\forall \vec x \in \mathbb{R}^2,&   \label{equ:boundedbeta}
\end{eqnarray}
where $  k_{\max,i}\ge k_{\min,i}>0$ for $i\in\{\TR,\PM\}$ and $\beta_{\max}\ge\beta_{\min}\ge0.$

\subsection{Weak formulations of the Stokes and Darcy's problems}
First, we introduce the weak formulations of the Stokes and Darcy's problems. In the free flow, we consider test functions from the spaces
$\displaystyle \vec{w}_\FF \in H_\FF : = \left\{\vec{w} \in \left( H^1 \left(\Omega_\FF \right)\right)^2:\, \vec{w}|_{\Gamma_{D,\FF}}=\vec{0}\right\}$ and  $\displaystyle \psi_\FF \in Z_\FF := L^2\left(\Omega_\FF\right)$
equipped with the corresponding norms 
$\displaystyle \left\| \vec{w}_\FF \right\|_{H_\FF}^2 :=  \|\vec{w}_\FF \|^2_{L^2 \left(\Omega_\FF\right)} +  \|\nabla \vec{w}_\FF \|^2_{L^2 (\Omega_\FF)} $ and $\displaystyle  \| \psi_\FF \|_{Z_\FF} := \| \psi_\FF \|_{L^2\left(\Omega_\FF\right)}$, respectively.
We assume that the boundary data in Eq.~\eqref{equ:FFBC} satisfy $\displaystyle \overline{\vec{v}}_\FF \in \left(H^{1/2}\left(\Gamma_{D,\FF}\right)\right)^2$ and $\displaystyle \overline{\vec{t}}_\FF \in \left(H^{-1/2}\left(\Gamma_{N,\FF}\right)\right)^2$. 

Multiplying Eq.~\eqref{equ:stokes2} by the test function $\vec{w}_\FF\in H_\FF$, integrating it over $\Omega_\FF$ by parts and considering the boundary conditions~\eqref{equ:FFBC}, we obtain
\begin{eqnarray}
    \int_{\Omega_\FF} \vec{f}_\FF \cdot \vec{w}_\FF ~\mathrm{d}\vec{x}
           =\int_{\Omega_\FF} \mu \nabla \vec{v}_\FF : \nabla \vec{w}_\FF ~\mathrm{d}\vec{x} - \int_{\Omega_\FF} \! p_\FF \nabla \cdot \vec{w}_\FF~\mathrm{d}\vec{x} 
     +\int_{\gamma_{\FF}} \! \left(\ten{T}\left(\vec{v}_\FF, p_\FF\right) \cdot \vec{n}\right) \cdot \vec{w}_\FF~\mathrm{d} s  
     - \int_{\Gamma_{N,\FF}} \! \overline{\vec{t}}_\FF \cdot  \vec{w}_\FF~\mathrm{d} s. \label{equ:weakformulationofStokesMomentum}
\end{eqnarray}
The weak formulation of the mass conservation \eqref{equ:stokes1} is obtained in the standard way
\begin{eqnarray}
    \int_{\Omega_\FF} \left( \nabla \cdot \vec{v}_\FF\right) \psi_\FF~\mathrm{d}\vec{x} =0,  \quad \forall \psi_\FF \in Z_\FF. \label{equ:weakformulationofStokesmass}
\end{eqnarray}

We consider the porous-medium model in its primal form \eqref{equ:darcyupdated}, and consider the test function space 
$\displaystyle \varphi_\PM \in H_\PM:= \left\{\varphi \in H^1(\Omega_\PM):\, \varphi |_{\Gamma_{D,\PM}}=0 \right\} $
with the norm 
$\displaystyle     \| \varphi_\PM\|_{H_\PM}^2: =  \| \varphi_\PM \|^2_{L^2(\Omega_\PM)} + \| \nabla \varphi_\PM\|^2_{L^2(\Omega_\PM)}$.
The boundary data~\eqref{equ:PMBC} are assumed to be $\overline{p}_\PM \in H^{1/2}(\Gamma_{D,\PM})$ and $\overline{v}_\PM\in H^{-1/2} (\Gamma_{N,\PM})$. Multiplying Eq.~\eqref{equ:darcyupdated} by the test function $\varphi_\PM\in H_\PM$  and integrating over $\Omega_\PM$ by parts, we get 
\begin{eqnarray}
     \int_{\Omega_\PM} q \varphi_\PM ~\mathrm{d}\vec{x} 
       =   \int_{\Omega_\PM} \left( \frac{\ten{K}_\PM}{\mu} \nabla p_\PM\right) \cdot \nabla \varphi_\PM~\mathrm{d}\vec{x}   + \int_{\gamma_\PM} (\vec{v}_\PM \cdot \vec{n}) \varphi_\PM ~\mathrm{d} s +\int_{\Gamma_{N,\PM}} \overline{v}_\PM \varphi_\PM ~\mathrm{d} s\label{equ:weakformulationofDarcy},
\end{eqnarray}
where Darcy's velocity \eqref{equ:Darcy2} and the boundary conditions \eqref{equ:PMBC} are considered. For the model formulation, the stress $\ten{T}\left(\vec{v}_\FF, p_\FF\right) \cdot \vec{n}$ in Eq.~\eqref{equ:weakformulationofStokesMomentum} and the velocity $ \vec{v}_\PM \cdot \vec{n}$ in Eq.~\eqref{equ:weakformulationofDarcy} will be replaced by the interface conditions \eqref{equ:stress-jump} and \eqref{equ:mass-conservationICPM} for the full-dimensional model  and by the transmission conditions \eqref{equ:transmissionFFnormal}--\eqref{equ:transmissionPM} for the reduced-dimensional counterpart, respectively. 

\subsection{Weak formulation of the full-dimensional model}
\label{sec:weak-full}
In this section, we derive the weak form for the full-dimensional Stokes--Brinkman--Darcy problem. For the transition region, we choose test functions from the following spaces $ \vec{w}_\TR \in H_\TR := \left\{ \vec{w} \in \left(H^1 \left(\Omega_\TR\right)\right)^2 : \vec{w}|_{\Gamma_{D,\TR}}= \vec{0} \right\}$ and $ \psi_\TR \in Z_\TR := L^2 ( \Omega_\TR)$
with the norms 
$\displaystyle  \| \vec{w}_\TR \|_{H_\TR}^2 := \|\vec{w}_\TR \|^2_{L^2 (\Omega_\TR)} +  \|\nabla \vec{w}_\TR \|^2_{L^2 (\Omega_\TR)}$ and $\displaystyle  \| \psi_\TR \|_{Z_\TR} := \| \psi_\TR \|_{L^2\left(\Omega_\TR\right)}$.  
The boundary data~\eqref{equ:TRBC} are supposed to be $\overline{\vec{v}}_\TR \in \left(H^{1/2} \left(\Gamma_\TR\right)\right)^2$ and $\overline{\vec t}_\TR \in\left(H^{-1/2} \left(\Gamma_\TR\right)\right)^2 $. 

In a similar manner, we obtain the weak formulation for the Brinkman equations~\eqref{equ:Brinkman1},\eqref{equ:Brinkman2}:
\begin{eqnarray}
        \int_{\Omega_\TR} \left( \nabla \cdot \vec{v}_\TR\right) \psi_\TR~\mathrm{d}\vec{x} =0, \quad \forall \psi_\TR \in Z_\TR,
        \label{equ:weakformulationBrinkmanmass}
\end{eqnarray}
\vspace{-2ex}
\begin{eqnarray}
    \int_{\Omega_\TR} \vec{f}_\TR \cdot \vec{w}_\TR ~\mathrm{d}\vec{x} 
     &=&\int_{\Omega_\TR} \mu \left(\ten{K}_\TR^{-1} \vec{v}_\TR\right)\cdot \vec{w}_\TR ~\mathrm{d}\vec{x}+\int_{\Omega_\TR}   \ten{T}_\EF\left(\vec{v}_\TR, p_\TR\right)  : \nabla \vec{w}_\TR ~\mathrm{d}\vec{x}  -\int_{\gamma_{\FF}} \left(\ten{T}_\EF\left(\vec{v}_\TR, p_\TR\right) \cdot \vec{n}\right) \cdot \vec{w}_\TR~\mathrm{d} s\nonumber\\
     &\quad&+\int_{\gamma_{\PM}} \left(\ten{T}_\EF\left(\vec{v}_\TR, p_\TR\right) \cdot \vec{n}\right)\cdot \vec{w}_\TR~\mathrm{d} s - \int_{\Gamma_{N,\TR}}\overline{\vec{t}}_\TR \cdot \vec{w}_\TR~\mathrm{d} s, \quad \forall \vec{w}_\TR\in H_\TR. \label{equ:wealformulationBrinkmanMomentum}
\end{eqnarray}
Applying the stress jump condition~\eqref{equ:stress-jump}, the balance of normal forces~\eqref{equ:forcebalanceICPM} and the Beavers--Joseph--Saffman condition~\eqref{equ:BJS-ICPM} to the terms 
at the interfaces in Eq.~\eqref{equ:wealformulationBrinkmanMomentum}, we get 
\begin{eqnarray}
     -\int_{\gamma_{\FF}} \left(\ten{T}_\EF\left(\vec{v}_\TR, p_\TR\right) \cdot \vec{n}\right) \cdot \vec{w}_\TR~\mathrm{d} s&=&-\int_{\gamma_{\FF}} \left(\ten{T}\left(\vec{v}_\FF, p_\FF\right) \cdot \vec{n}\right) \cdot \vec{w}_\TR~\mathrm{d} s  +\int_{\gamma_\FF} \frac{\mu}{\sqrt{K_\TR}} \left(\vec{\beta}\vec{v}_\FF\right) \cdot \vec{w}_\TR~\mathrm{d} s, \label{equ:weakformulationTransitionTensorICFF}\\
    \int_{\gamma_{\PM}} \left(\ten{T}_\EF\left(\vec{v}_\TR, p_\TR\right) \cdot \vec{n}\right)\cdot \vec{w}_\TR~\mathrm{d} s  &=& -\int_{\gamma_{\PM}} p_\PM \left(\vec{w}_\TR\cdot\vec{n}\right)~\mathrm{d} s  
   +\int_{\gamma_{\PM}}\frac{  \alpha \mu_\EF }{\sqrt{K_\PM}}\left(\vec{v}_\TR\cdot\vec{\tau}\right)\left(\vec{w}_\TR\cdot\vec{\tau}\right)~\mathrm{d} s. \label{equ:weakformulationTransitionTensorICPM}
\end{eqnarray}
Substituting Eq.~\eqref{equ:mass-conservationICPM} into Eq.~\eqref{equ:weakformulationofDarcy}, we obtain 
\begin{eqnarray}
     \int_{\Omega_\PM} q \varphi_\PM ~\mathrm{d}\vec{x} =  \int_{\Omega_\PM} \left( \frac{\ten{K}_\PM}{\mu} \nabla p_\PM\right) \cdot \nabla \varphi_\PM~\mathrm{d}\vec{x}   
     +\int_{\gamma_\PM} \left(\vec{v}_\TR \cdot \vec{n}\right) \varphi_\PM ~\mathrm{d} s +\int_{\Gamma_{N,\PM}} \overline{v}_\PM \varphi_\PM ~\mathrm{d} s.\label{equ:weakformulationofDarcyupdate}
\end{eqnarray}

For the well-posedness analysis of the coupled full-dimensional problem, we choose the test function spaces 
$\displaystyle \mathcal{H}^\FU := \left\{ ( \vec{w}_\FF, \vec{w}_\TR, \varphi_\PM )  \in H_\FF \times H_\TR\times H_\PM \big| \  \vec{w}_\FF = \vec{w}_\TR \textnormal{ on } \gamma_\FF \right\}$
and $\displaystyle \mathcal{Z}^\FU:= Z_\FF \times Z_\TR$,
and define the corresponding norms 
$\displaystyle \| \vec \theta ^\FU\|_{\mathcal{H}^\FU}^2 :=  \| \vec{w}_\FF \|_{H_\FF}^2  + \| \vec{w}_\TR\|_{H_\TR}^2 + \|\varphi_\PM \|_{H_\PM}^2$ and  $ \displaystyle \| (\psi_\FF, \psi_\TR) \|_{\mathcal{Z}^\FU}^2 :=  \|\psi_\FF\|_{Z_\FF}^2+ \|\psi_\TR\|_{Z_\TR}^2$
for $\vec \theta ^\FU: = (\vec{w}_\FF, \vec{w}_\TR, \varphi_\PM) \in \mathcal{H}^\FU $ and $(\psi_\FF, \psi_\TR) \in \mathcal{Z}^\FU$, respectively. 
Considering Eqs.~\eqref{equ:weakformulationofStokesMomentum}--\eqref{equ:weakformulationofDarcyupdate}, we define the bilinear operators
 $\mathcal{A}^\FU:\mathcal{H}^+\times \mathcal{H}^\FU \to \mathbb{R}$ and $\mathcal{B}^\FU:\mathcal{H}^\FU \times \mathcal{Z}^\FU \to \mathbb{R}$ as
\begin{eqnarray*}
    \mathcal{A}^\FU(\vec{\zeta}^\FU; \vec{\theta}^\FU)  &:=& \mathcal{A}_\FF(\vec{v}_\FF; \vec{w}_\FF)  + \mathcal{A}_\TR (\vec{v}_\TR; \vec{w}_\TR) + \mathcal{A}_\PM (p_\PM; \varphi_\PM) 
    +\mathcal{A}_{\gamma_\FF,\gamma_\PM}(\vec{\zeta}^\FU; \vec{\theta}^\FU)
    \label{equ:defofA^FU}, \\
    \mathcal{B}^\FU(\vec{\zeta}^\FU; \psi_\FF, \psi_\TR) &:=&    -\int_{\Omega_\FF} \left(\nabla \cdot \vec{v}_\FF\right) \psi_\FF ~\mathrm{d}\vec{x}-\int_{\Omega_\TR} \left(\nabla \cdot \vec{v}_\TR\right) \psi_\TR ~\mathrm{d}\vec{x},\label{equ:defofB^FU}
\end{eqnarray*}
for $\vec \zeta ^\FU:=(\vec{v}_\FF, \vec{v}_\TR, p_\PM) \in \mathcal{H}^\FU$, where 
\begin{eqnarray*}
    \mathcal{A}_\FF (\vec{v}_\FF; \vec{w}_\FF) := \int_{\Omega_\FF} \mu \nabla \vec{v}_\FF : \nabla \vec{w}_\FF~\mathrm{d}\vec{x}, \qquad 
    \mathcal{A}_\PM (p_\PM; \varphi_\PM)  := \int_{\Omega_\PM} \left( \frac{\ten{K}_\PM}{\mu} \nabla p_\PM\right) \cdot \nabla \varphi_\PM~\mathrm{d}\vec{x}, \hspace{+23.5ex}
    \\
    \mathcal{A}_\TR (\vec{v}_\TR; \vec{w}_\TR) := \int_{\Omega_\TR} \mu \left(\ten{K}_\TR^{-1} \vec{v}_\TR\right) \cdot \vec{w}_\TR~ \mathrm{d}\vec{x} + \int_{\Omega_\TR} \mu_\EF \nabla \vec{v}_\TR : \nabla\vec{w}_\TR ~\mathrm{d}\vec{x}, \hspace{+46ex}\\
    \mathcal{A}_{\gamma_\FF,\gamma_\PM}(\vec{\zeta}^\FU; \vec{\theta}^\FU) := \int_{\gamma_\FF} \frac{\mu }{\sqrt{{K}_\TR}} (\vec{\beta}\vec{v}_\FF) \cdot \vec{w}_\FF ~\mathrm{d} s-\int_{\gamma_{\PM}} p_\PM (\vec{w}_\TR\cdot \vec{n})~\mathrm{d} s+ \int_{\gamma_\PM} (\vec{v}_\TR \cdot \vec{n}) \varphi_\PM ~\mathrm{d} s   
    + \int_{\gamma_{\PM}}  \frac{\alpha\mu_\EF}{\sqrt{K_\PM}}(\vec{v}_\TR \cdot \vec{\tau})  (\vec{w}_\TR\cdot \vec{\tau})~\mathrm{d} s. 
\end{eqnarray*}
Note that $ \mathcal{A}_\FF$, $\mathcal{A}_\TR$,  and $\mathcal{A}_\PM$ are the bilinear operators in the corresponding flow regions and $\mathcal{A}_{\gamma_\FF,\gamma_\PM}$ represents the bilinear operator on the interfaces. Additionally, the linear functional is defined as 
\begin{eqnarray*}
    \mathcal{L}^\FU\left(\vec \theta^\FU\right) := \int_{\Omega_\FF} \vec{f}_\FF \cdot  \vec{w}_\FF ~\mathrm{d}\vec{x} +\int_{\Omega_\TR} \vec{f}_\TR \cdot \vec{w}_\TR ~\mathrm{d}\vec{x} +\int_{\Omega_\PM}q \varphi_\PM ~\mathrm{d}\vec{x}+ \int_{\Gamma_{N,\FF}}\overline{\vec{t}}_\FF \cdot \vec{w}_\FF~\mathrm{d} s  + \int_{\Gamma_{N,\TR}}\overline{\vec{t}}_\TR \cdot \vec{w}_\TR~\mathrm{d} s -\int_{\Gamma_{N,\PM}} \overline{v}_\PM \varphi_\PM ~\mathrm{d} s.
\end{eqnarray*}
Using the above notations, the weak formulation of the coupled full-dimensional problem \eqref{equ:stokes1}--\eqref{equ:BJS-ICPM} reads: \\
\emph{Find $\vec \zeta ^\FU \in \mathcal{H}^\FU$ and $\left(p_\FF, p_\TR \right)\in \mathcal{Z}^\FU$ such that }
\begin{subequations}
\begin{eqnarray}
    \mathcal{A}^\FU(\vec{\zeta}^\FU; \vec{\theta}^\FU) + \mathcal{B}^\FU(\vec{\theta}^\FU; p_\FF, p_\TR) &=& \mathcal{L}^\FU \left(\vec{\theta}^\FU\right),
    \hspace{+2ex}\forall \vec{\theta}^\FU \in \mathcal{H}^\FU, \label{equ:weakformulationfulldim1}   \\
    \mathcal{B}^\FU (\vec{\zeta}^\FU; \psi_\FF, \psi_\TR) &=& 0, 
    \hspace{+8.2ex}
    \forall \left(\psi_\FF, \psi_\TR\right) \in \mathcal{Z}^\FU.     \label{equ:weakformulationfulldim2}  
\end{eqnarray}
\label{equ:weakformulationfulldim}
\end{subequations}
\emph{Remark 1.} For non-homogeneous Dirichlet boundary data, we have  $(\overline{\vec{v}}_\FF, \overline{\vec{v}}_\TR, \overline{p}_\PM)\in\left(H^{1/2}\left(\Gamma_{D,\FF}\right)\right)^2\times\left(H^{1/2} \left(\Gamma_{D,\TR}\right)\right)^2\times H^{1/2}(\Gamma_{D,\PM})$. The trace operator is surjective. Therefore, we solve~\eqref{equ:weakformulationfulldim} with a lifting $\Xi^\FU \in \left(H^1(\Omega_\FF)\right)^2 \times \left(H^1(\Omega_\TR)\right)^2 \times H^1(\Omega_\PM)$ of $\hat{\vec \zeta}^\FU:=(\hat{\vec{v}}_\FF, \hat{\vec{v}}_\TR, \hat{p}_\PM)\in \left(H^1(\Omega_\FF)\right)^2 \times \left(H^1(\Omega_\TR)\right)^2 \times H^1(\Omega_\PM)$ such that $\hat{\vec \zeta}^\FU - \Xi^\FU = \vec \zeta^\FU \in \mathcal{H}^\FU$.

\subsection{Analysis of the full-dimensional model}
\label{sec:ana-full}
In this section, we establish existence and uniqueness for the full-dimensional Stokes--Brinkman--Darcy model~\eqref{equ:weakformulationfulldim}.
\begin{theorem}[Well-posedness of full-dimensional model] \label{thm:fulldimwellposedness}
    The coupled full-dimensional problem~\eqref{equ:weakformulationfulldim} has a unique solution $\left(\vec{\zeta}^\FU, p_\FF,p_\TR\right)\in\mathcal{H}^\FU\times \mathcal{Z}^\FU$. 
\end{theorem}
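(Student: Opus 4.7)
The weak problem~\eqref{equ:weakformulationfulldim} is a saddle-point system, so the natural plan is to verify the hypotheses of the Brezzi--Babu\v{s}ka theorem for the triple $(\mathcal{A}^\FU,\mathcal{B}^\FU,\mathcal{L}^\FU)$ on the product space $\mathcal{H}^\FU\times\mathcal{Z}^\FU$. First I would check continuity: the bulk terms in $\mathcal{A}_\FF$, $\mathcal{A}_\TR$ and $\mathcal{A}_\PM$ are bounded by Cauchy--Schwarz together with~\eqref{equ:boundedK}, while the interface terms in $\mathcal{A}_{\gamma_\FF,\gamma_\PM}$ and $\mathcal{B}^\FU$ are controlled through the trace theorem applied to $H^1(\Omega_\FF)$, $H^1(\Omega_\TR)$ and $H^1(\Omega_\PM)$, and the positivity bound~\eqref{equ:boundedbeta}. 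Continuity of $\mathcal{L}^\FU$ follows from the assumed regularity of the source and boundary data via Cauchy--Schwarz and the trace inequality.

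The central step is coercivity of $\mathcal{A}^\FU$ on $\mathcal{H}^\FU$ (which is stronger than coercivity on $\ker\mathcal{B}^\FU$ and therefore sufficient). The crucial observation is that the two cross-coupling terms on $\gamma_\PM$ in $\mathcal{A}_{\gamma_\FF,\gamma_\PM}$ are skew: testing with $\vec{\theta}^\FU=\vec{\zeta}^\FU$ gives
\begin{equation*}
-\int_{\gamma_\PM}p_\PM(\vec{v}_\TR\cdot\vec{n})\,\mathrm{d}s+\int_{\gamma_\PM}(\vec{v}_\TR\cdot\vec{n})p_\PM\,\mathrm{d}s=0,
\end{equation*}
so those terms drop out. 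What remains are the nonnegative contributions $\mu\|\nabla\vec{v}_\FF\|_{L^2(\Omega_\FF)}^2$, $\mu_\EF\|\nabla\vec{v}_\TR\|_{L^2(\Omega_\TR)}^2+\mu\int_{\Omega_\TR}(\ten{K}_\TR^{-1}\vec{v}_\TR)\cdot\vec{v}_\TR\,\mathrm{d}\vec{x}$, $\mu^{-1}\int_{\Omega_\PM}(\ten{K}_\PM\nabla p_\PM)\cdot\nabla p_\PM\,\mathrm{d}\vec{x}$, and the two nonnegative interface forms involving $\vec{\beta}$ and $\alpha$. Using~\eqref{equ:boundedK} in the Brinkman region, the $L^2$-norm of $\vec{v}_\TR$ is directly controlled by $k_{\max,\TR}^{-1}\mu\|\vec{v}_\TR\|_{L^2(\Omega_\TR)}^2$, giving full $H_\TR$-coercivity. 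Since $\Gamma_{D,\FF}\neq\emptyset$ and (as required for well-posedness) $\Gamma_{D,\PM}\neq\emptyset$, Poincar\'e's inequality on $H_\FF$ and $H_\PM$ upgrades the gradient control to full norm control, so $\mathcal{A}^\FU(\vec{\zeta}^\FU;\vec{\zeta}^\FU)\ge C\|\vec{\zeta}^\FU\|_{\mathcal{H}^\FU}^2$ with $C>0$ depending on $\mu,\mu_\EF,k_{\max,\TR},k_{\min,\PM}$ and the Poincar\'e constants.

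For the inf-sup condition on $\mathcal{B}^\FU$ I would argue component-wise: $\mathcal{B}^\FU$ decouples into the two Stokes-type divergence pairings on $\Omega_\FF$ and $\Omega_\TR$. Given any $(\psi_\FF,\psi_\TR)\in\mathcal{Z}^\FU$, the classical surjectivity of $\nabla\cdot:(H^1_0(\Omega_i))^2\to L^2_0(\Omega_i)$ (or the variants allowing a nontrivial Dirichlet part) produces, on each subdomain separately, a velocity $\vec{w}_i$ vanishing on $\gamma_i$ (so the matching condition $\vec{w}_\FF=\vec{w}_\TR$ on $\gamma_\FF$ in $\mathcal{H}^\FU$ is satisfied trivially by extending with zero traces at $\gamma_\FF$) with $\nabla\cdot\vec{w}_i=-\psi_i$ and $\|\vec{w}_i\|_{H^1(\Omega_i)}\le C\|\psi_i\|_{L^2(\Omega_i)}$. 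Taking $\vec{\theta}^\FU=(\vec{w}_\FF,\vec{w}_\TR,0)$ yields the required inf-sup bound.

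The main obstacle is the last step: constructing an inf-sup test function that simultaneously lies in $\mathcal{H}^\FU$ (hence respecting the continuity $\vec{w}_\FF=\vec{w}_\TR$ on $\gamma_\FF$) and realises the divergence of the two separate $L^2$ functions $\psi_\FF$ and $\psi_\TR$; choosing the Bogovski\u{\i}-type lifting with zero trace on $\gamma_\FF$ is the clean way around it, but one must be careful that the resulting $(\psi_\FF,\psi_\TR)$ space is the \emph{full} $Z_\FF\times Z_\TR$ (not a mean-zero subspace), which is permissible here because the Neumann/Dirichlet structure of the outer boundary removes the usual compatibility constraint. Once continuity, coercivity and inf-sup are established, Brezzi's theorem gives existence and uniqueness of $(\vec{\zeta}^\FU,p_\FF,p_\TR)$, and the non-homogeneous Dirichlet case is reduced to the homogeneous one by the lifting $\Xi^\FU$ described in Remark~1.
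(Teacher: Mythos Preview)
Your proposal follows the same framework as the paper: verify continuity of $\mathcal{A}^\FU$ and $\mathcal{B}^\FU$, coercivity of $\mathcal{A}^\FU$, and the inf--sup condition for $\mathcal{B}^\FU$, then invoke the Brezzi theorem. The continuity and coercivity arguments are essentially identical to the paper's---Cauchy--Schwarz, the ellipticity bounds~\eqref{equ:boundedK}--\eqref{equ:boundedbeta}, trace inequalities, and Poincar\'e in $\Omega_\FF$ and $\Omega_\PM$; the skew cancellation on $\gamma_\PM$ that you single out is exactly what makes $\mathcal{A}_{\gamma_\FF,\gamma_\PM}(\vec{\zeta}^\FU;\vec{\zeta}^\FU)$ sign-definite in the paper's computation as well.

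The only real difference is the inf--sup construction. The paper takes $\vec{y}_i=\nabla\xi_i$ where $\xi_i$ solves an auxiliary Poisson problem $-\Delta\xi_i=\psi_i$ with mixed boundary data (in the style of \cite[Sec.~7.1.2]{boffi2013mixed}), and then uses $H^2$-regularity together with the identity $\|\nabla(\nabla\xi_i)\|_{L^2}=\|\Delta\xi_i\|_{L^2}$ to bound $\|\vec{y}_i\|_{H_i}$ by $\|\psi_i\|_{L^2}$. You instead invoke a Bogovski\u{\i}-type right inverse of the divergence with zero trace imposed on $\gamma_\FF$, so that the matching condition $\vec{w}_\FF=\vec{w}_\TR$ built into $\mathcal{H}^\FU$ is satisfied automatically. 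Your route makes the compatibility with the coupling constraint explicit and avoids elliptic regularity; the paper's route stays closer to the classical Stokes argument but is terser about how the pair $(\vec{y}_\FF,\vec{y}_\TR)$ actually meets the constraint $\vec{y}_\FF|_{\gamma_\FF}=\vec{y}_\TR|_{\gamma_\FF}$. Both approaches are standard and lead to the same conclusion, under the same implicit assumption (a nontrivial Neumann portion on each outer boundary) that justifies taking $Z_i=L^2(\Omega_i)$ rather than $L^2_0(\Omega_i)$.
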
 
%
\begin{proof}
To prove the well-posedness of \eqref{equ:weakformulationfulldim}, we verify conditions \eqref{continuous1}--\eqref{inf-sup} presented in \ref{sec:appB}.
%
%
To obtain the continuity of $\mathcal{A}^\FU$, we take into account the Cauchy--Schwarz inequality and the uniform ellipticity conditions~\eqref{equ:boundedK}, \eqref{equ:boundedbeta}
that yields
\begin{eqnarray*}
        | \mathcal{A}^\FU(\vec{\zeta}^\FU; \vec{\theta}^\FU)| \le |\mathcal{A}_\FF(\vec{v}_\FF; \vec{w}_\FF) | + |\mathcal{A}_\TR (\vec{v}_\TR; \vec{w}_\TR)| + |\mathcal{A}_\PM (p_\PM; \varphi_\PM)| + |\mathcal{A}_{\gamma_\FF,\gamma_\PM}(\vec{\zeta}^\FU; \vec{\theta}^\FU)|\hspace{+19.4ex} \\
        \le \mu \| \nabla \vec{v}_\FF \|_{L^2(\Omega_\FF)} \| \nabla\vec{w}_\FF \|_{L^2 (\Omega_\FF)} + \frac{\mu}{ k_{\min, \TR}} \| \vec{v}_\TR\|_{L^2( \Omega_\TR)} \|\vec{w}_\TR\|_{L^2( \Omega_\TR)} + \mu_\EF \| \nabla\vec{v}_\TR\|_{L^2(\Omega_\TR)} \|\nabla\vec{w}_\TR\|_{L^2(\Omega_\TR)} \hspace{+3.7ex}\\
         + \frac{k_{\max, \PM}}{\mu} \|\nabla p_\PM\|_{L^2(\Omega_\PM)}\|\nabla \varphi_\PM\|_{L^2(\Omega_\PM)} + \frac{\mu \beta_{\max}}{\sqrt{K_\TR}}  \|\vec{v}_\FF\|_{L^2 (\gamma_\FF)} \|\vec{w}_\FF \|_{L^2(\gamma_\FF)}  + \| p_\PM\|_{L^2(\gamma_\PM)} \|\vec{w}_\TR \|_{L^2(\gamma_\PM)} \hspace{-2ex}\\ 
         + \|\vec{v}_\TR \|_{L^2(\gamma_\PM)} \|\varphi_\PM\|_{L^2(\gamma_\PM)}  + \frac{\alpha \mu_\EF}{\sqrt{K_\PM}}  \|\vec{v}_\TR \|_{L^2(\gamma_\PM)} \|\vec{w}_\TR \|_{L^2(\gamma_\PM)}.\hspace{+29ex}
    \end{eqnarray*}
Applying \eqref{equ:tracegeneral} from the trace theorem on $\gamma_\FF$ and $\gamma_\PM$:
\begin{eqnarray}
    \exists \hspace{+1ex}C_{\gamma_i,i}\hspace{+2ex}>0 \hspace{+0.5ex} \textnormal{ s.t. } \hspace{+2ex} \|\vec{v}_i\|_{L^2(\gamma_i)}\hspace{+1ex} \le C_{\gamma_i,i} \|\vec{v}_i \|_{H_i} \hspace{+4ex} \textnormal{ for } i \in \{\FF, \TR\},\label{equ:traceFFTR}\\
        \exists\hspace{0.5ex} C_{\gamma_\PM,\PM}>0 \hspace{+0.5ex}\textnormal{ s.t. }\hspace{+0.5ex} \|p_\PM \|_{L^2(\gamma_\PM)} \le C_{\gamma_\PM,\PM} \|p_\PM \|_{H_\PM},\label{equ:tracePM} \hspace{+10.3ex}
\end{eqnarray}
and inequality \eqref{equ:quadraticrule}, we estimate further  
    \begin{eqnarray*}
     | \mathcal{A}^\FU(\vec{\zeta}^\FU; \vec{\theta}^\FU)| 
     \le  \left(\mu+\frac{\mu \beta_{\max}}{ \sqrt{K_{\TR}}} C_{\gamma_\FF,\FF}^2 \right) \| \vec{v}_\FF \|_{H_\FF} \|\vec{w}_\FF \|_{H_\FF} \!+ \!\left(\frac{\mu}{ k_{\min, \TR}}+\mu_\EF+\frac{\alpha \mu_\EF}{ \sqrt{K_{\PM}} } C_{\gamma_\PM,\TR}^2\right) \| \vec{v}_\TR\|_{H_\TR} \|\vec{w}_\TR\|_{H_\TR}\hspace{+1.5ex}\\
     + \frac{k_{\max, \PM}}{\mu} \|p_\PM \|_{H_\PM}\|\varphi_\PM\|_{H_\PM} 
     + C_{\gamma_\PM,\TR}C_{\gamma_\PM,\PM} \left(\| p_\PM\|_{H_\PM} \|\vec{w}_\TR \|_{H_\TR} +  \|\vec{v}_\TR\|_{H_\TR} \| \varphi_\PM\|_{H_\PM}\right)  \\
    \le  \frac{C_{\mathcal{A}^\FU}}{4}\left(\| \vec{v}_\FF \|_{H_\FF}+ \| \vec{v}_\TR\|_{H_\TR}+\|p_\PM\|_{H_\PM} \right) \left(\|\vec{w}_\FF \|_{H_\FF}+ \|\vec{w}_\TR\|_{H_\TR} +\|\varphi_\PM\|_{H_\PM}\right)\hspace{+11.4ex}\\
    \le C_{\mathcal{A}^\FU} \| \vec{\zeta}^\FU\|_{\mathcal{H}^\FU}\| \vec{\theta}^\FU\|_{\mathcal{H}^\FU},\hspace{+53.5ex}
\end{eqnarray*}
where 
\begin{eqnarray*}
    C_{\mathcal{A}^\FU}:= 4\max \left\{ \mu\left(1+\frac{\beta_{\max}}{\sqrt{K_\TR}} C_{\gamma_\FF,\FF}^2 \right),\,  \frac{\mu }{k_{\min, \TR}}+\mu_\EF\left(1+\frac{ \alpha }{ \sqrt{K_{\PM}}} C_{\gamma_\PM,\TR}^2\right),\,  \frac{k_{\max, \PM}}{\mu},\, C_{\gamma_\PM,\TR}C_{\gamma_\PM,\PM}\right\}.
\end{eqnarray*}
Thus, the continuity of $\mathcal{A}^\FU$ is ensured.

In a similar way, taking the first expression in \eqref{equ:quadraticrule} into account, we obtain
\begin{eqnarray*}
    | \mathcal{B}^\FU(\vec{\zeta}^\FU; \psi_\FF, \psi_\TR) | &\le& \bigg|\int_{\Omega_\FF} (\nabla \cdot \vec{v}_\FF) \psi_\FF ~\mathrm{d}\vec{x}\bigg| + \bigg| \int_{\Omega_\TR} (\nabla \cdot \vec{v}_\TR) \psi_\TR ~\mathrm{d}\vec{x}\bigg|  
    \le \| \vec{v}_\FF \| _{H_\FF} \| \psi_\FF\|_{Z_\FF} + \|\vec{v}_\TR\|_{H_\TR}\|\psi_\TR\|_{Z_\TR} \\
    &\le&
    \left(\| \vec{v}_\FF\|_{H_\FF}+\| \vec{v}_\TR\|_{H_\TR} \right) \left(\| \psi_\FF\|_{Z_\FF}  +\|\psi_\TR\|_{Z_\TR} \right)\le 2 \| \vec{\zeta}^\FU\|_{\mathcal{H}^\FU} \| (\psi_\FF, \psi_\TR)\|_{\mathcal{Z}^\FU}.
\end{eqnarray*}
Therefore, the continuity of the bilinear form $\mathcal{B}^\FU$ is guaranteed.

Now, we show the coercivity of the bilinear form $\mathcal{A}^\FU$ on $\displaystyle \mathit{Kern}(\mathcal{B}^\FU)=\left\{ \vec{\theta}^\FU \in \mathcal{H}^\FU | \nabla \cdot \vec{w}_\FF = 0,\, \nabla \cdot \vec{w}_\TR=0 \right\}$. Recalling the uniform ellipticity conditions \eqref{equ:boundedK}, \eqref{equ:boundedbeta}, the auxiliary result \eqref{equ:auxiliarypoincare} from the Poincar{\'e} theorem and taking into account the physical parameters $\mu$, $\mu_\EF$, $\alpha$, $\sqrt{K_\TR}$, $\sqrt{K_\PM} > 0$ and $\beta_{\min} \geq 0$, we obtain
\begin{eqnarray*}
        \mathcal{A}^\FU(\vec{\zeta}^\FU; \vec{\zeta}^\FU) = \mathcal{A}_\FF(\vec{v}_\FF; \vec{v}_\FF)  + \mathcal{A}_\TR (\vec{v}_\TR; \vec{v}_\TR) + \mathcal{A}_\PM (p_\PM; p_\PM) +\mathcal{A}_{\gamma_\FF,\gamma_\PM}(\vec{\zeta}^\FU; \vec{\zeta}^\FU)\hspace{+34ex}
        \\
                \ge   \frac{\mu}{\tilde{C}_{P,\FF}} \|\vec{v}_\FF \|_{H_\FF}^2 + \min\bigg\{ \frac{\mu}{ k_{\max, \TR}}, \mu_\EF\bigg\}\| \vec{v}_\TR \|^2_{H_\TR} + \frac{k_{\min, \PM}}{\mu\tilde{C}_{P,\PM}} \|p_\PM\|^2_{H_\PM}+ \frac{\mu \beta_{\min}}{  \sqrt{K_{\TR}}} \|\vec{v}_\FF\|_{L^2(\gamma_\FF)}^2 + \frac{ \alpha \mu_\EF}{\sqrt{K_\PM}}  \|\vec{v}_\TR\cdot \vec{\tau}\|_{L^2(\gamma_\PM)}^2 \\
        \ge  C^*_{\mathcal{A}^\FU} \|\vec{\zeta}^\FU\|^2_{\mathcal{H}^\FU}
   ,\qquad  C^*_{\mathcal{A}^\FU}:= \min \left\{  \frac{\mu}{\tilde{C}_{P,\FF}}, \frac{\mu}{ k_{\max, \TR}}, \mu_\EF,  \frac{k_{\min, \PM}}{\mu\tilde{C}_{P,\PM}}\right\}.\hspace{+36ex}
\end{eqnarray*}
Therefore, the coercivity of $\mathcal{A}^\FU$ is proved. 

In the last step, we show that $\mathcal{B}^\FU$ is inf-sup-stable following the idea from~\cite[Section 7.1.2]{boffi2013mixed}.  We choose $\psi_i \in Z_i$, $i\in\{\FF,\TR\}$ as a source term in the Poisson problem
\begin{eqnarray}
        -\Delta \xi_i = \psi_i \quad \textnormal{in }  \Omega_i,\qquad \textnormal{with}\quad 
        \xi_i = 0 \quad \textnormal{on }  \Gamma_{D,i}, \quad
        \nabla \xi_i \cdot \vec{n}_i = 0 \quad \textnormal{on } \Gamma_{N,i}, \label{equ:Poisson}
\end{eqnarray} 
where  $\xi_i \in H^2(\Omega_i)$.  
Application of integration by parts twice in Eq.~\eqref{equ:Poisson} yields
\begin{eqnarray}
    \| \nabla\left(\nabla  \xi_i\right) \|_{L^2(\Omega_i)} ^2 =  \|\Delta  \xi_i \|_{L^2(\Omega_i)}^2 .\label{equ:H2tolaplace}
\end{eqnarray}
We define ${\vec{y}}_i: = \nabla \xi_i$ for $i\in \{\FF,\TR\}$.  Considering \eqref{equ:Poisson}, \eqref{equ:H2tolaplace} and \eqref{equ:auxiliarypoincare}, we obtain 
\begin{eqnarray*}
    \| \vec y _i\|_{H_i}^2 =\| {\vec y }_i\|_{L^2(\Omega_i)}^2 + \| \nabla {\vec y }_i \|_{L^2(\Omega_i)}^2 \le  \tilde{C}_{P,i} \|\nabla {\vec y }_i \|_{L^2(\Omega_i)}^2
     = \tilde{C}_{P,i} \| \nabla(\nabla  \xi_i) \|_{L^2(\Omega_i)}^2   = \tilde{C}_{P,i} \|\Delta  \xi_i \|_{L^2(\Omega_i)}^2 = \tilde{C}_{P,i} \| \psi_i \|_{L^2(\Omega_i)}^2.
\end{eqnarray*}
This leads to
\begin{eqnarray}
\| ({\vec y}_\FF, {\vec y}_\TR, 0)\|_{\mathcal{H}^\FU}^2 = \| {\vec y }_\FF\|_{H_\FF}^2+\| {\vec y }_\TR\|_{H_\TR}^2\le \left(  \tilde{C}_{P,\FF}+\tilde{C}_{P,\TR}\right)\| (\psi_\FF, \psi_\TR )\|_{\mathcal{Z}^\FU}^2.  \label{equ:infsup2-1}
\end{eqnarray}
Considering $\nabla \cdot \vec{y}_i = \Delta \xi_i = - \psi_i$, we get
\begin{eqnarray}
    \mathcal{B}^\FU(({\vec{y}}_\FF, {\vec{y}}_\TR, 0); \psi_\FF, \psi_\TR)=-\int_{\Omega_\FF} \left(\nabla \cdot {\vec{y}}_\FF\right) \psi_\FF~\mathrm{d}\vec{x}  -\int_{\Omega_\TR} \left(\nabla \cdot {\vec{y}}_\TR\right) \psi_\TR~\mathrm{d}\vec{x}    =\| \psi_\FF \|_{L^2(\Omega_\FF)}^2 +  \| \psi_\TR \|_{L^2(\Omega_\TR)}^2 =\| (\psi_\FF, \psi_\TR)\|^2_{\mathcal{Z}^\FU}.\label{equ:infsup2-2}
\end{eqnarray}
Taking equations \eqref{equ:infsup2-1} and \eqref{equ:infsup2-2} into account, we obtain 
\begin{eqnarray*}
    \sup_{\vec{\zeta}^\FU\in \mathcal{H}^\FU}  \frac{\mathcal{B}^\FU(\vec{\zeta}^\FU; \psi_\FF, \psi_\TR) }{\| \vec{\zeta}^\FU\|_{\mathcal{H}^\FU}} \ge  \frac{\mathcal{B}^\FU( ({\vec{y}}_\FF, {\vec{y}}_\TR, 0); \psi_\FF, \psi_\TR)}{ \|( {\vec{y}}_\FF, {\vec{y}}_\TR, 0)\|_{\mathcal{H}^\FU}}   \ge  C^*_{\mathcal{B}^\FU} \|(\psi_\FF, \psi_\TR) \|_{\mathcal{Z}^\FU},\qquad
 C_{\mathcal{B}^\FU}^*:= \left(\tilde{C}_{P,\FF}+\tilde{C}_{P,\TR}\right)^{-1/2}. 
\end{eqnarray*}
Thus, $\mathcal{B}^\FU$ is inf-sup-stable. This completes the proof. 
\end{proof}


\subsection{Weak formulation of the dimensionally reduced model}
\label{sec:weak-reduced}
In this section, we derive the weak formulation of the coupled dimensionally reduced model. The boundary $\partial \gamma =  \partial \gamma_D \cup \partial \gamma_N$ (Fig.~\ref{fig:Fig1intro}, right) consists of two points, where either the averaged Dirichlet or the averaged Neumann boundary conditions~\eqref{equ:TRBC} are prescribed. 
We consider test function spaces
$\displaystyle \vec{W}\in H_\gamma : = \left\{ \vec{W}\in \left( H^1\left(\gamma\right)\right)^2: \vec{W}|_{\partial \gamma_D} = \vec{0}\right\}$ and $\displaystyle \Psi \in Z_\gamma := L^2 \left(\gamma\right)$ 
equipped with the norms 
$\displaystyle \|\vec{W}\|_{H_\gamma}^2:= \|\vec{W} \|_{L^2(\gamma)}^2+\left\|(\partial \vec{W}) \big/ (\partial \vec{\tau}) \right\|_{L^2(\gamma)}^2$ and $\displaystyle \| \Psi \|_{Z_\gamma} :=\| \Psi \|_{L^2(\gamma)}$.

\subsubsection{Flow problems in \texorpdfstring{$\Omega_\FF$}{Omega_ff} and \texorpdfstring{$\Omega_\PM$}{Omega_pm}}
We begin with the weak form for the flow problems in the free-flow and porous-medium subdomains. 
Decomposing the normal stress over $\gamma_\FF$ in~Eq.~\eqref{equ:weakformulationofStokesMomentum} and  applying the transmission conditions \eqref{equ:transmissionFFnormal} and  \eqref{equ:transmissionPM}, we get  
\begin{eqnarray}
\int_{\gamma_{\FF}}\left( \vec{n}\cdot \ten{T}\left(\vec{v}_\FF, p_\FF\right) \cdot \vec{n}\right)  (\vec{w}_\FF\cdot \vec{n})~\mathrm{d} s 
      =\int_{\gamma_\FF} \Bigg( -\frac{ \mu_\EF\left(\lambda_1^2-\lambda_2^2\right)}{\lambda_1 d} V_\vec{n}- \frac{\lambda_1 +\lambda_2 }{\lambda_1} P +   \frac{\mu_\EF(\lambda_1^2 -\lambda_2^2)}{\lambda_1 d} \vec{v}_\FF\cdot \vec{n} + \frac{\lambda_2}{\lambda_1}p_\PM |_{\gamma_\PM}\nonumber\hspace{+6.5ex}\\
       + \frac{\mu }{\sqrt{K_{\TR}}}\left(\vec{\beta}\vec{v}_\FF\right)\cdot\vec{n}  \Bigg)\left(\vec{w}_\FF \cdot \vec{n}\right)~\mathrm{d} s .\label{equ:reducedweakformulationFFTensornormal}\hspace{+38ex}
\end{eqnarray}
Analogously, substituting the transmission condition~\eqref{equ:transmissionFFtangential} into the tangential part of the normal stress tensor over $\gamma_\FF$ in \eqref{equ:weakformulationofStokesMomentum}, we obtain
\begin{eqnarray}
    \int_{\gamma_{\FF}}\left( \vec{\tau}\cdot \ten{T}\left(\vec{v}_\FF, p_\FF\right) \cdot \vec{n}\right)  \left(\vec{w}_\FF\cdot \vec{\tau}\right)~\mathrm{d} s
    =\int_{\gamma_\FF} \Bigg( - \frac{ \mu_\EF\left(6 \alpha d + 12\sqrt{K_\PM}\right) }{d\left(\alpha d+4 \sqrt{K_\PM}\right)} V_\vec{\tau}
     +\frac{  \mu_\EF\left(4 \alpha d + 12 \sqrt{{K}_\PM}\right) }{d\left( \alpha d+4\sqrt{{K}_\PM} \right)}\vec{v}_\FF \cdot \vec{\tau}\hspace{+11ex}
     \nonumber\\
     + \frac{\mu }{\sqrt{{K}}_\TR} \left(\vec{\beta}\vec{v}_\FF\right) \cdot \vec{\tau}  \Bigg)\left(\vec{w}_\FF \cdot \vec{\tau}\right)~\mathrm{d} s. \hspace{+35.5ex} \label{equ:reducedweakformulationFFTensortangential}
\end{eqnarray}
%
Using the transmission condition \eqref{equ:transmissionPM} in the normal component of the porous-medium velocity over $\gamma_\PM$ in Eq.~\eqref{equ:weakformulationofDarcy}, we get
\begin{eqnarray}
     \int_{\gamma_\PM} \left(\vec{v}_\PM \cdot \vec{n}\right)\varphi_\PM ~\mathrm{d} s=\int_{\gamma_\PM} \Bigg( \frac{\lambda_1 + \lambda_2}{\lambda_1}V_\vec{n} - \frac{d}{\mu_\EF\lambda_1} P 
     +\frac{d}{\mu_\EF\lambda_1}p_\PM- \frac{\lambda_2}{\lambda_1} \vec{v}_\FF\cdot \vec{n} |_{\gamma_\FF} \Bigg) \varphi_\PM ~\mathrm{d} s. \label{equ:reducedweakformulationPMTtransmission}
\end{eqnarray}

Substituting \eqref{equ:reducedweakformulationFFTensornormal}--\eqref{equ:reducedweakformulationPMTtransmission} in \eqref{equ:weakformulationofStokesMomentum}--\eqref{equ:weakformulationofDarcy}, we obtain the weak form for the free-flow and porous-medium subdomains:\\
\emph{Find $(\vec{v}_\FF, p_\PM) \in H_\FF \times H_\PM $ and $p_\FF\in Z_\FF$ such that }
\begin{subequations}
\begin{eqnarray}
    \hspace{-2ex}\mathcal{A}_{\FP} (\vec{v}_\FF, p_\PM; \vec{w}_\FF, \varphi_\PM) + \mathcal{B}_{\FP}(\vec{w}_\FF, \varphi_\PM;p_\FF )&=&f_{\FP} (\vec{V}, P; \vec{w}_\FF, \varphi_\PM)+\mathcal{L}_{\FP}(\vec{w}_\FF, \varphi_\PM), \hspace{+1ex} \forall (\vec{w}_\FF, \varphi_\PM)\in H_\FF\times H_\PM,\label{equ:WeakUncoupledFlow1}\\
    \mathcal{B}_{\FP}( \vec{v}_\FF, p_\PM;\psi_\FF )&= &0 , \hspace{+30ex}\forall \psi_\FF\in Z_\FF.
 \label{equ:WeakUncoupledFlow2}
\end{eqnarray} \label{equ:WeakUncoupledFlow}
\end{subequations}
 Here, the bilinear forms
\begin{eqnarray*}
    \mathcal{A}_{\FP} (\vec{v}_\FF, p_\PM; \vec{w}_\FF, \varphi_\PM) &:=&\mathcal{A}_\FF ( \vec{v}_\FF;  \vec{w}_\FF) +\mathcal{A}_\PM (p_\PM; \varphi_\PM)
    + \mathcal{A}_{\FP,\gamma}(\vec{v}_\FF, p_\PM; \vec{w}_\FF, \varphi_\PM) \\
    \mathcal{B}_{\FP}( \vec{v}_\FF, p_\PM;\psi_\FF ) &:=& - \int_{\Omega_\FF} \left(\nabla \cdot \vec{v}_\FF \right)\psi_\FF~\mathrm{d}\vec{x},
    \end{eqnarray*}
are defined with
\begin{eqnarray*}
   \mathcal{A}_{\FP,\gamma}(\vec{v}_\FF, p_\PM; \vec{w}_\FF, \varphi_\PM):=   \int_{\gamma_\FF} \frac{\mu_\EF(\lambda_1^2 -\lambda_2^2)}{\lambda_1 d} \left(\vec{v}_\FF\cdot \vec{n} \right)\left(\vec{w}_\FF \cdot \vec{n}\right)~\mathrm{d} s +\int_{\gamma_\FF}  \frac{  \mu_\EF\left(4 \alpha d + 12\sqrt{{K}_\PM}\right) }{d\left( \alpha d+4\sqrt{{K}_\PM} \right)}\left( \vec{v}_\FF \cdot \vec{\tau} \right)\left(\vec{w}_\FF \cdot \vec{\tau}\right)~\mathrm{d} s \hspace{+1ex}\\
     + \int_{\gamma_\FF}\frac{\mu }{\sqrt{{K}_{\TR}}}(\vec{\beta}\vec{v}_\FF) \cdot\vec{w}_\FF~\mathrm{d} s +   \int_{\gamma_\FF}   \frac{\lambda_2}{\lambda_1}p_\PM |_{\gamma_\PM}  (\vec{w}_\FF \cdot \vec{n})~\mathrm{d} s
  -  \int_{\gamma_\PM}  \frac{\lambda_2}{\lambda_1} \left(\vec{v}_\FF\cdot \vec{n} |_{\gamma_\FF} \right) \varphi_\PM ~\mathrm{d} s +  \int_{\gamma_\PM} \frac{d}{\mu_\EF\lambda_1}p_\PM  \varphi_\PM ~\mathrm{d} s.
\end{eqnarray*}
Additionally, we have the bilinear operator
\begin{eqnarray*}
    f_{\FP} (\vec{V}, P; \vec{w}_\FF, \varphi_\PM) :=\int_{\gamma_\FF} \frac{\mu_\EF \left(\lambda_1^2-\lambda_2^2\right)}{\lambda_1 d} V_\vec{n} \left(\vec{w}_\FF \cdot \vec{n}\right)~\mathrm{d} s+\int_{\gamma_\FF} \frac{\lambda_1+\lambda_2 }{\lambda_1} P  \left(\vec{w}_\FF \cdot \vec{n}\right)~\mathrm{d} s\hspace{+27ex}\\
     +\int_{\gamma_\FF} \frac{ \mu_\EF\left(6 \alpha d + 12\sqrt{K_\PM}\right) }{d\left( \alpha d+4\sqrt{{K}_\PM}\right)} V_\vec{\tau}\left(\vec{w}_\FF \cdot \vec{\tau}\right)~\mathrm{d} s+\int_{\gamma_\PM}\frac{d}{\mu_\EF\lambda_1} P \varphi_\PM ~\mathrm{d} s-\int_{\gamma_\PM}\frac{\lambda_1 + \lambda_2}{\lambda_1}V_\vec{n} \varphi_\PM ~\mathrm{d} s,
\end{eqnarray*}
and the linear functional
    \begin{eqnarray*}
    \mathcal{L}_{\FP}(\vec{w}_\FF, \varphi_\PM) :=\int_{\Omega_\FF} \vec{f}_\FF \cdot \vec{w}_\FF ~\mathrm{d}\vec{x}
    +\int_{\Omega_\PM}q \varphi_\PM ~\mathrm{d}\vec{x}+\int_{\Gamma_{N,\FF}}\overline{\vec{t}}_\FF \cdot \vec{w}_\FF~\mathrm{d} s-\int_{\Gamma_{N,\PM}}\overline{v}_\PM \varphi_\PM ~\mathrm{d} s.
\end{eqnarray*}
According to the assumption on the geometry (section~\ref{sec:derivation-dimred}) the interfaces $\gamma_\FF$ and $\gamma_\PM$ approach the complex interface~$\gamma$ when $d\to 0$. Therefore, the integrals over $\gamma_\FF$ and $\gamma_\PM$ in  $\mathcal{A}_{\FP,\gamma}$, $f_\FP$ from \eqref{equ:WeakUncoupledFlow1} are considered as the integrals over $\gamma$. 

\subsubsection{Reduced problem on \texorpdfstring{$\gamma$}{gamma}}
To derive the weak form for the reduced problem on $\gamma$, we start with the boundary conditions on~$\partial \gamma= \partial \gamma_D \cup \partial \gamma_N$. 
For the Dirichlet boundary conditions, we define 
\begin{eqnarray}
     \overline{\vec V} :=\frac{1}{d} \int^{d/2}_{-d/2} \overline{\vec{v}}_\TR~\mathrm{d} n \quad \textnormal{on } \partial \gamma_D, \label{equ:averageDirichlet} 
\end{eqnarray}
where $\overline{\vec{V}}\in \left(H^{1/2}\left(\partial \gamma_D\right)\right)^2$. Averaging the Neumann boundary condition in~\eqref{equ:TRBC} across the transition region, we obtain
\begin{eqnarray}
    \overline{T}_{\vec{n}} = \frac{1}{d} \int^{d/2}_{-d/2} \overline{\vec{t}}_\TR \cdot \vec{n}~\mathrm{d} n = \mu_\EF \frac{\partial V_\vec{n}}{\partial \vec \tau},\quad 
    \overline{T}_\vec{\tau} = \frac{1}{d} \int^{d/2}_{-d/2}\overline{\vec{t}}_\TR \cdot \vec{\tau} ~\mathrm{d} n =\mu_\EF \frac{\partial V_\vec{\tau}}{\partial \vec \tau}-P \quad \textnormal{on } \partial \gamma_N, \label{equ:averageNeuman} 
\end{eqnarray}
where
$\overline{\vec{T}}=\left(\overline{T}_\vec{n}, \overline{T}_\vec{\tau}\right)^\top \in \left(H^{-1/2}(\partial \gamma_N)\right)^2$.

We derive the weak formulation for the averaged mass conservation equation~\eqref{equ:averagedMass} in the standard way, i.e. multiply it by the test function $\Psi \in Z_\gamma$, integrate over $\gamma$ and consider the transmission condition~\eqref{equ:transmissionPM}. The result reads
\begin{eqnarray}
    \int_\gamma d \frac{\partial V_\vec{\tau}}{\partial \vec{\tau}}  \Psi ~\mathrm{d} s   
    =- \int_\gamma \frac{\lambda_1+\lambda_2}{\lambda_1}\left(\vec{v}_\FF\cdot \vec{n} | _{\gamma_\FF}\right)\Psi~\mathrm{d} s +\int_\gamma  \frac{d}{\mu_\EF\lambda_1}  \left(p_\PM |_{\gamma_\PM} \right) \Psi~\mathrm{d} s  -\int_\gamma  \frac{d}{\mu_\EF\lambda_1} P\Psi~\mathrm{d} s  +\int_\gamma  \frac{\lambda_1 + \lambda_2}{\lambda_1}V_\vec{n}  \Psi~\mathrm{d} s.\label{equ:weakformlationAveragedBrinkmanMass}
\end{eqnarray}
Substituting Eqs.~\eqref{equ:constantpressure}, \eqref{equ:generalnormalclosure-1} and \eqref{equ:generalnormalclosure-2} into Eq.~\eqref{equ:averagedMomentumnormaloriginal}, we get the following equivalent formulation for the normal component of the averaged momentum conservation equation 
\begin{eqnarray}
    \left(\frac{2\mu_\EF(\lambda_1 +\lambda_2)}{d} + d\mu \vec{M}_{\vec{n}\vec{n}}\right) V_{\vec{n}}+ d\mu \vec{M}_{\vec{n}\vec{\tau}} V_\vec{\tau} - d\mu_\EF \frac{\partial^2 V_\vec{n}}{\partial \vec{\tau}^2} = d F_\vec{n} +\frac{\mu_\EF\left(\lambda_1+\lambda_2\right)}{d} \vec{v}_\FF \cdot \vec{n}|_{\gamma_\FF}+ \frac{\mu_\EF( \lambda_1 + \lambda_2)}{d} \vec{v}_\PM\cdot \vec{n}|_{\gamma_\PM}.\label{equ:aveBrinkmannormalUpdate}
\end{eqnarray}
We consider test functions $ \vec{W} := \left(W_{\vec{n}},W_{\vec{\tau}}\right)^\top \in H_\gamma$. Multiplying Eq.~\eqref{equ:aveBrinkmannormalUpdate} with  $W_\vec{n}$, integrating over $\gamma$ by parts and substituting the normal component of the porous-medium velocity from Eq.~\eqref{equ:transmissionPM}, we obtain
\begin{eqnarray}
    \int_\gamma\left(\frac{\mu_\EF\left(\lambda_1^2 -\lambda_2^2\right)}{\lambda_1 d} + d\mu \vec{M}_{\vec{n}\vec{n}}\right) V_{\vec{n}}W_{\vec{n}}~\mathrm{d} s+  \int_\gamma d\mu \vec{M}_{\vec{n}\vec{\tau}} V_\vec{\tau}W_{\vec{n}}~\mathrm{d} s 
      +  \int_\gamma d\mu_\EF \frac{\partial V_\vec{n}}{\partial \vec{\tau}} \frac{\partial W_{\vec{n}}}{\partial \vec\tau }~\mathrm{d} s +\int_\gamma \frac{ \lambda_1 + \lambda_2}{\lambda_1 } P W_{\vec{n}}~\mathrm{d} s  \label{equ:weakformlationAveragedBrinkmanMomentumNormal}\hspace{+6ex}\\
      =  \int_\gamma d F_\vec{n}W_{\vec{n}}~\mathrm{d} s + d\left[ \overline{T}_{\vec n} W_{\vec n}\right]_{\partial \gamma_N}
      +  \int_\gamma\frac{\mu_\EF\left(\lambda_1^2-\lambda_2^2\right)}{\lambda_1 d} \left(\vec{v}_\FF \cdot \vec{n}|_{\gamma_\FF}\right) W_{\vec{n}}~\mathrm{d} s
     +\int_\gamma\frac{ \lambda_1 + \lambda_2}{\lambda_1 }\left( p_\PM|_{\gamma_\PM} \right) W_{\vec{n}}~\mathrm{d} s. \nonumber
\end{eqnarray}
Analogously, we obtain the tangential component of the averaged momentum conservation equation on $\gamma$ by substituting \eqref{equ:BJS-ICPM}, \eqref{equ:closure_tangential1} and \eqref{equ:closure_tangential2} into~\eqref{equ:averagedMomentumtangentialoriginal}:
\begin{eqnarray}
    d \mu \vec{M}_{\vec{\tau}\vec{n}} V_{\vec{n}}+ \left(\frac{\mu_\EF\left(12 \alpha d + 12\sqrt{K_\PM}\right)}{d \left(\alpha d+4\sqrt{K_\PM}\right)}  +  d\mu \vec{M}_{\vec{\tau\tau}} \right) V_\vec{\tau} 
    -  d \left(\mu_\EF   \frac{\partial^2 V_\vec{\tau}}{\partial \vec{\tau}^2} - \frac{\partial P}{\partial \vec{\tau}}\right) 
    =  d  F_\vec{\tau}+ \frac{\mu_\EF\left(6\alpha d +12\sqrt{K_\PM}\right)}{d \left(\alpha d+4\sqrt{K_\PM}\right)} \vec{v}_\FF \cdot \vec{\tau} |_{\gamma_\FF}. \label{equ:aveBrinkmantangentialUpdate}
    \end{eqnarray} 
Multiplying Eq.~\eqref{equ:aveBrinkmantangentialUpdate} with the test function $W_\vec{\tau}$, integrating over $\gamma$ by parts, we have
\begin{eqnarray}
     \int_\gamma d \mu \vec{M}_{\vec{\tau}\vec{n}} V_{\vec{n}}W_{\vec{\tau}}~\mathrm{d} s+ \int_\gamma\left(\frac{\mu_\EF\left(12 \alpha  d + 12\sqrt{K_\PM}\right)}{d \left(\alpha d+4\sqrt{K_\PM}\right)}  +  d\mu \vec{M}_{\vec{\tau\tau}} \right) V_\vec{\tau} W_{\vec{\tau}}~\mathrm{d} s + \int_\gamma   d\left(  \mu_\EF \frac{\partial V_\vec{\tau}}{\partial \vec{\tau}} -   P\right)\frac{\partial W_{\vec{\tau}}}{\partial \vec{\tau}}~\mathrm{d} s  \label{equ:weakformlationAveragedBrinkmanMomentumtangential} \hspace{+10ex} \\
     =\int_\gamma  d  F_\vec{\tau} W_{\vec{\tau}}~\mathrm{d} s+d\left[\overline{T}_\vec{\tau}W_{\vec \tau} \right]_{\partial \gamma_N}  
     + \int_\gamma \frac{\mu_\EF\left(6\alpha d +12\sqrt{K_\PM}\right)}{ d \left(\alpha d+4\sqrt{K_\PM}\right)} \left(\vec{v}_\FF \cdot \vec{\tau} |_{\gamma_\FF}\right) W_{\vec{\tau}}~\mathrm{d} s. \nonumber 
\end{eqnarray}

The uncoupled weak formulations on $\gamma$ given in Eqs.~\eqref{equ:weakformlationAveragedBrinkmanMass}, \eqref{equ:weakformlationAveragedBrinkmanMomentumNormal} and \eqref{equ:weakformlationAveragedBrinkmanMomentumtangential} can be written as:\\
\emph{Find $\vec{V}=(V_{\vec n},V_\vec{\tau})^\top \in H_\gamma$, $\Psi \in Z_\gamma$ such that}
\begin{subequations}
\begin{eqnarray}
        \mathcal{A}_\gamma\left( \vec{V}; \vec{W}\right)   + \mathcal{B}_\gamma\left(\vec{W}; P\right)  &=&  f_\gamma( \vec{v}_\FF, p_\PM;\vec{W})+\mathcal{L}_\gamma\left( \vec{W}\right) , \quad  \forall\vec{W} \in H_\gamma,  \label{equ:weakUncoupledgamma1} \\
    \mathcal{B}_\gamma \left( \vec{V}; \Psi \right) - \mathcal{E}_\gamma\left(P;\Psi \right)&=& g_\gamma (\vec{v}_\FF, p_\PM; \Psi ), \hspace{+12.2ex} \forall \Psi\hspace{+0.5ex}\in\hspace{+0.2ex} Z_\gamma. \label{equ:weakUncoupledgamma2}
\end{eqnarray}\label{equ:weakUncoupledgamma}
\end{subequations}
Here, the following bilinear operators and the linear functionals are given  
\begin{eqnarray*}
    \mathcal{A}_\gamma\left( \vec{V}; \vec{W}\right) &:=&    \int_\gamma d\mu (\ten{K}_\TR^{-1} \vec{V}) \cdot \vec{W}~\mathrm{d} s +\int_\gamma \frac{\mu_\EF\left(\lambda_1^2 -\lambda_2^2\right)}{\lambda_1 d} V_{\vec{n}}W_{\vec{n}}~\mathrm{d} s +  \int_\gamma d\mu_\EF \frac{\partial V_\vec{n}}{\partial \vec{\tau}} \frac{\partial W_{\vec{n}}}{\partial \vec\tau }~\mathrm{d} s  \\
     &\quad &+ \int_\gamma   d  \mu_\EF \frac{\partial V_\vec{\tau}}{\partial \vec{\tau}}\frac{\partial W_{\vec{\tau}}}{\partial \vec{\tau}}~\mathrm{d} s+ \int_\gamma \frac{\mu_\EF\left(12 \alpha  d + 12\sqrt{K_\PM}\right)}{d \left( \alpha d+4\sqrt{K_\PM}\right)} V_\vec{\tau} W_{\vec{\tau}}~\mathrm{d} s,  \\
    \mathcal{B}_\gamma\left(\vec{W}; P\right)  &:=& - \int_\gamma   d P \frac{\partial W_{\vec{\tau}}}{\partial \vec{\tau}}~\mathrm{d} s +\int_\gamma   \frac{ \lambda_1 + \lambda_2}{\lambda_1 } P  W_{\vec{n}}~\mathrm{d} s, \qquad
    \mathcal{E}_\gamma\left(P;\Psi \right) := \int_\gamma \frac{d}{\mu_\EF\lambda_1} P  \Psi ~\mathrm{d} s,\\
     f_\gamma( \vec{v}_\FF, p_\PM;\vec{W})&:=&  \int_\gamma  \frac{\mu_\EF\left(\lambda_1^2-\lambda_2^2\right)}{\lambda_1 d}(\vec{v}_\FF \cdot \vec{n}|_{\gamma_\FF})W_{\vec{n}}~\mathrm{d} s  +  \int_\gamma  \frac{( \lambda_1 + \lambda_2)}{\lambda_1 }\left(p_\PM|_{\gamma_\PM}   \right) W_{\vec{n}}~\mathrm{d} s \\
     &\quad& + \int_\gamma \frac{\mu_\EF\left(6 \alpha d +12\sqrt{K_\PM}\right)}{d \left(\alpha d+4\sqrt{K_\PM}\right)} \left(\vec{v}_\FF \cdot \vec{\tau} |_{\gamma_\FF}\right) W_{\vec{\tau}}~\mathrm{d} s, \\
    g_\gamma (\vec{v}_\FF, p_\PM;\Psi )&:=& \int_\gamma \frac{\lambda_1 +\lambda_2}{\lambda_1} \left(\vec{v}_\FF\cdot \vec{n} | _{\gamma_\FF}\right)\Psi~\mathrm{d} s -\int_\gamma \frac{d}{\mu_\EF\lambda_1}\left(p_\PM |_{\gamma_\PM}\right)\Psi~\mathrm{d} s,\\
     \mathcal{L}_\gamma\left( \vec{W}\right)&:=& \int_\gamma d F_\vec{n}W_{\vec{n}}~\mathrm{d} s +\int_\gamma  d  F_\vec{\tau} W_{\vec{\tau}}~\mathrm{d} s+ d\left[ \overline{\vec{T}} \cdot \vec{W} \right]_{\partial \gamma_N}. 
\end{eqnarray*}

\subsection{Analysis of the dimensionally reduced model}
\label{sec:ana-reduced}
For the well-posedness analysis of the dimensionally reduced model, we define the test function spaces
$\displaystyle \vec \theta ^\RE: =( \vec{w}_\FF, \varphi_\PM, \vec{W}, \Psi) \in \mathcal{H}^\RE := H_\FF \times H_\PM \times H_\gamma\times Z_\gamma $ and  $\displaystyle \psi_\FF \in \mathcal{Z}^\RE:= Z_\FF$
equipped with the norms $\displaystyle  \| \vec \theta ^\RE \|_{\mathcal{H}^\RE}^2 :=  \| \vec{w}_\FF \|_{H_\FF}^2  + \|\varphi_\PM \|_{H_\PM}^2+ \| \vec{W}\|_{H_\gamma}^2 + \|\Psi\|_{Z_\gamma}^2$ and $\displaystyle  \| \psi_\FF \|_{\mathcal{Z}^\RE} :=  \|\psi_\FF\|_{Z_\FF}$, respectively.
Taking into account Eqs.~\eqref{equ:WeakUncoupledFlow} and \eqref{equ:weakUncoupledgamma}, we obtain the weak formulation of the coupled dimensionally reduced model:\\
\emph{Find $\vec{\zeta}^\RE:=(\vec{v}_\FF, p_\PM, \vec{V},P)\in \mathcal{H}^\RE $ and  $p_\FF \in Z^\RE$ such that}
\begin{subequations}
\begin{eqnarray}
    \mathcal{A}^\RE \left( \vec{\zeta}^\RE ; \vec{\theta}^\RE \right) + \mathcal{B}^\RE \left(\vec{\theta}^\RE; p_\FF \right) &=& \mathcal{L}^\RE (\vec \theta^\RE), 
    \quad  \forall \vec{\theta}^\RE \in \mathcal{H}^\RE, \label{equ:weakformulationofCoupledreducedproblem1}\\
     \mathcal{B}^\RE \left(\vec{\zeta}^\RE; \psi_\FF \right) &=& 0,
     \hspace{+7.6ex}
     \forall \psi_\FF \in \mathcal{Z}^\RE, \label{equ:weakformulationofCoupledreducedproblem2}
\end{eqnarray}%
\label{eq:weakformulationofCoupledreducedproblem}\end{subequations}
where the bilinear operators $\mathcal{A}^\RE: \mathcal{H}^\RE \times \mathcal{H}^\RE \to \mathbb{R}$, $\mathcal{B}^\RE: \mathcal{H}^\RE \times \mathcal{Z}^\RE \to \mathbb{R} $ and the linear functional $\mathcal{L}^\RE: \mathcal{H}^\RE \to \mathbb{R}$ are
\begin{eqnarray*}
    \mathcal{A}^\RE \left( \vec{\zeta}^\RE ; \vec{\theta}^\RE \right) &:=&
    \mathcal{A}_{\FP} (\vec{v}_\FF, p_\PM; \vec{w}_\FF, \varphi_\PM) 
    +\mathcal{A}_\gamma\left( \vec{V}; \vec{W}\right)   + \mathcal{B}_\gamma\left(\vec{W}; P\right) - \mathcal{B}_\gamma \left( \vec{V}; \Psi \right) + \mathcal{E}_\gamma \left(P; \Psi \right)\\
    &\quad&- f_{\FP} (\vec{V}, P; \vec{w}_\FF, \varphi_\PM) - f_\gamma( \vec{v}_\FF, p_\PM;\vec{W}) +g_\gamma (\vec{v}_\FF, p_\PM;\Psi) ,\\
    \mathcal{B}^\RE \left(\vec{\zeta}^\RE; \psi_\FF \right) &:=& \mathcal{B}_\FP (\vec{v}_\FF, p_\PM; \psi_\FF),
   \qquad \mathcal{L}^\RE(\vec \theta^\RE): =  \mathcal{L}_{\FP} (\vec{w}_\FF, \varphi_\PM)+\mathcal{L}_\gamma(\vec{W}) .
\end{eqnarray*}
\emph{Remark 2.} We treat the non-homogeneous Dirichlet boundary conditions $(\overline{\vec{v}}_\FF, \overline{p}_\PM, \overline{\vec V}) \in \left(H^{1/2}\left(\Gamma_{D,\FF}\right)\right)^2 \times H^{1/2}(\Gamma_{D,\PM}) \times \left(H^{1/2}\left(\partial\gamma_{D}\right)\right)^2$ by considering the corresponding lifting $\Xi^\RE \in \left(H^1(\Omega_\FF)\right)^2 \times H^1(\Omega_\PM) \times \left(H^1(\gamma)\right)^2$ of $(\hat{\vec{v}}_\FF, \hat{p}_\PM, \hat{\vec{V}})\in \left(H^1(\Omega_\FF)\right)^2 \times H^1(\Omega_\PM) \times \left(H^1(\gamma)\right)^2$ such that $((\hat{\vec{v}}_\FF, \hat{p}_\PM, \hat{\vec{V}})-\Xi^\RE,P)=\vec{\zeta}^\RE\in \mathcal{H}^\RE$ in the formulation~\eqref{eq:weakformulationofCoupledreducedproblem}.

\begin{theorem}[Well-posedness of dimensionally reduced model]\label{thm:reduceddimwellposedness}
There exists a positive parameter $d^*\in \mathbb{R}^+$. Under the assumption the transition region thickness satisfies \mbox{$d \in (0, d^*)$}, the solution 
    of the coupled dimensionally reduced problem \eqref{eq:weakformulationofCoupledreducedproblem} exists and is unique. 
\end{theorem}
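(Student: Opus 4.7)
The plan is to apply the Brezzi saddle-point framework stated in~\ref{sec:appB}, the same one used in the proof of Theorem~\ref{thm:fulldimwellposedness}, and to verify continuity of $\mathcal{A}^\RE$ and $\mathcal{B}^\RE$, coercivity of $\mathcal{A}^\RE$ on $\mathit{Kern}(\mathcal{B}^\RE)$, and the inf--sup stability of $\mathcal{B}^\RE$. The upper bound $d^*$ enters to guarantee that the geometric identification $\gamma_\FF\simeq\gamma_\PM\simeq\gamma$ used in the cross-term cancellations below is meaningful and that the $d$-dependent continuity and coercivity constants remain in a regime compatible with the saddle-point theorem.

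Continuity of $\mathcal{A}^\RE$ follows routinely from Cauchy--Schwarz, the uniform ellipticity bounds~\eqref{equ:boundedK},\eqref{equ:boundedbeta}, the bulk trace estimates~\eqref{equ:traceFFTR},\eqref{equ:tracePM}, and the one-dimensional embedding $H^1(\gamma)\hookrightarrow C(\overline{\gamma})$ needed for the $\partial\gamma_N$-boundary term inside $\mathcal{L}_\gamma$; continuity of $\mathcal{B}^\RE$ is identical to that in Theorem~\ref{thm:fulldimwellposedness}. Since $\mathcal{B}^\RE(\vec\zeta^\RE;\psi_\FF)$ depends only on $\nabla\cdot\vec v_\FF$, the inf--sup stability reduces to the classical Stokes inf--sup on $\Omega_\FF$: given $\psi_\FF\in Z_\FF$, I solve the Poisson problem~\eqref{equ:Poisson} on $\Omega_\FF$, set $\vec y_\FF:=\nabla\xi_\FF$, and test with $(\vec y_\FF,0,\vec 0,0)\in\mathcal{H}^\RE$, reproducing the last step of the proof of Theorem~\ref{thm:fulldimwellposedness}.

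The main effort is coercivity. Setting $\vec\theta^\RE=\vec\zeta^\RE$ makes the skew pair $\mathcal{B}_\gamma(\vec W;P)-\mathcal{B}_\gamma(\vec V;\Psi)$ vanish and the two $\lambda_2/\lambda_1$-cross-entries inside $\mathcal{A}_{\FP,\gamma}$ cancel. The remaining off-diagonal contributions $-f_\FP(\vec V,P;\vec v_\FF,p_\PM)-f_\gamma(\vec v_\FF,p_\PM;\vec V)+g_\gamma(\vec v_\FF,p_\PM;P)$ combine with the diagonal interface entries of $\mathcal{A}_{\FP,\gamma}$, $\mathcal{A}_\gamma$ and $\mathcal{E}_\gamma$ into the non-negative sum
\[
\int_\gamma\frac{\mu_\EF(\lambda_1^2-\lambda_2^2)}{\lambda_1 d}\bigl(V_{\vec n}-\vec v_\FF\cdot\vec n\bigr)^2\,\mathrm{d} s + \int_\gamma\frac{d}{\mu_\EF\lambda_1}\bigl(P-p_\PM\bigr)^2\,\mathrm{d} s + \int_\gamma\frac{\mu_\EF\,Q(V_{\vec\tau},\vec v_\FF\cdot\vec\tau)}{d(\alpha d+4\sqrt{K_\PM})}\,\mathrm{d} s,
\]
with $Q(x,y):=(12\alpha d+12\sqrt{K_\PM})x^2+(4\alpha d+12\sqrt{K_\PM})y^2-2(6\alpha d+12\sqrt{K_\PM})xy$ positive semi-definite thanks to the algebraic identity $(12\alpha d+12\sqrt{K_\PM})(4\alpha d+12\sqrt{K_\PM})-(6\alpha d+12\sqrt{K_\PM})^2=12(\alpha d)^2+48\alpha d\sqrt{K_\PM}\ge 0$. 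What is left, namely $\mathcal{A}_\FF(\vec v_\FF;\vec v_\FF)$, $\mathcal{A}_\PM(p_\PM;p_\PM)$ and the $\ten K_\TR^{-1}$-mass term together with the two tangential $H^1$-semi-norm contributions in $\mathcal{A}_\gamma$, yields coercivity of $\|\vec v_\FF\|_{H_\FF}^2$, $\|p_\PM\|_{H_\PM}^2$ and $\|\vec V\|_{H_\gamma}^2$ through~\eqref{equ:auxiliarypoincare} and~\eqref{equ:boundedK}. Coercivity of $\|P\|_{Z_\gamma}^2$ is then recovered from the pressure square above via $\|P\|_{L^2(\gamma)}\le\|P-p_\PM\|_{L^2(\gamma)}+\|p_\PM\|_{L^2(\gamma)}$ together with the trace bound~\eqref{equ:tracePM} applied to the already-controlled $\|p_\PM\|_{H_\PM}$.

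The principal obstacle I anticipate is the exact sign- and magnitude-bookkeeping that makes the above grouping work: every $O(1/d)$ cross entry must be matched by a diagonal entry of identical weight, and in the tangential direction the three coefficients coming from $\mathcal{A}_\gamma$, $\mathcal{A}_{\FP,\gamma}$ and the cross terms must conspire so that $Q$ is positive semi-definite rather than indefinite. The assumption $d\in(0,d^*)$ is used to absorb the $d$-dependent constants produced by this absorption argument and by the continuity estimate, so that the coercivity and continuity conditions of the saddle-point theorem in~\ref{sec:appB} hold simultaneously.
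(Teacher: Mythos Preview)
Your proposal is correct and follows the same saddle-point strategy as the paper, but your coercivity argument is organised differently and, in one respect, more transparently: where the paper splits the cross terms $f_\FP+f_\gamma-g_\gamma$ via Young's inequality with carefully tuned parameters $\delta_1=\delta_3=1$, $\delta_2=2$ and a free $\delta\in(0,1)$, you complete squares directly. Your quadratic-form check for $Q(V_{\vec\tau},\vec v_\FF\cdot\vec\tau)$ is correct, and the three perfect squares indeed absorb all the $O(1/d)$ cross terms exactly, leaving the bulk forms $\mathcal{A}_\FF$, $\mathcal{A}_\PM$ and the $\ten K_\TR^{-1}$- and $\partial_{\vec\tau}$-contributions of $\mathcal{A}_\gamma$ untouched. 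This avoids the parameter bookkeeping of the paper's argument at no cost.

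Where your proposal is imprecise is the role of $d^*$. The restriction $d<d^*$ does \emph{not} enter through the geometric identification $\gamma_\FF\simeq\gamma_\PM\simeq\gamma$ (that identification is part of the model definition and holds for any $d$), nor through the continuity estimate (the continuity constant $C_{\mathcal{A}^\RE}$ may blow up as $d\to 0$, but the Brezzi theorem only asks for finiteness). It enters exactly in the step you sketch last: recovering $\|P\|_{Z_\gamma}^2$ from $\tfrac{d}{\mu_\EF\lambda_1}\|P-p_\PM\|_{L^2(\gamma)}^2$ via the triangle inequality and the trace bound~\eqref{equ:tracePM} subtracts $\tfrac{d}{\mu_\EF\lambda_1}C_{\gamma_\PM,\PM}^2\|p_\PM\|_{H_\PM}^2$ from the already-controlled bulk term $\tfrac{k_{\min,\PM}}{\mu\tilde C_{P,\PM}}\|p_\PM\|_{H_\PM}^2$, and the resulting coefficient stays positive precisely when $d<d^*=\mu_\EF\lambda_1 k_{\min,\PM}/(\mu\tilde C_{P,\PM}C_{\gamma_\PM,\PM}^2)$. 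This is exactly the mechanism the paper isolates; you should state it explicitly rather than attributing $d^*$ to the model geometry.
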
 
\begin{proof}
To show the well-posedness of \eqref{eq:weakformulationofCoupledreducedproblem}, we verify conditions \eqref{continuous1}--\eqref{inf-sup} for $\mathcal{A}^\RE$ and $\mathcal{B}^\RE$. Following the same steps as in the proof of Theorem~\ref{thm:fulldimwellposedness} for $\mathcal{B}^\FU$, we show $\mathcal{B}^\RE$ is continuous and inf-sup stable.

To verify the continuity of $\mathcal{A}^\RE$, we first need to find the upper bounds of $\mathcal{A}_\FP$ in Eq.~\eqref{equ:WeakUncoupledFlow1} and $\mathcal{A}_\gamma$, $\mathcal{B}_\gamma$, $\mathcal{E}_\gamma$ in Eqs.~\eqref{equ:weakUncoupledgamma}.  Taking into account the Cauchy--Schwarz inequality, the uniform ellipticity conditions \eqref{equ:boundedK}, \eqref{equ:boundedbeta}, the trace theorem results~\eqref{equ:traceFFTR}, \eqref{equ:tracePM} and the fact that $\lambda_1 > \lambda_2 \ge 0$ (see Table~\ref{tab:closureconditionnormal}), we obtain 
\begin{eqnarray}
 |\mathcal{A}_{\FP} (\vec{v}_\FF, p_\PM; \vec{w}_\FF, \varphi_\PM)| 
 \le | \mathcal{A}_\FF \left( \vec{v}_\FF;  \vec{w}_\FF\right)| +|\mathcal{A}_\PM (p_\PM; \varphi_\PM)|  + |\mathcal{A}_{\FP,\gamma} (\vec{v}_\FF, p_\PM; \vec{w}_\FF, \varphi_\PM)|  \hspace{+22ex} \nonumber\\
     \le\mu \| \nabla \vec{v}_\FF \|_{L^2(\Omega_\FF)} \| \nabla\vec{w}_\FF \|_{L^2 (\Omega_\FF)} + \frac{k_{\max, \PM}}{\mu} \|\nabla p_\PM\|_{L^2(\Omega_\PM)}\|\nabla \varphi_\PM\|_{L^2(\Omega_\PM)}+ \frac{\mu_\EF\left(\lambda_1^2 -\lambda_2^2\right)}{\lambda_1 d} \| \vec{v}_\FF\cdot \vec{n}\|_{L^2(\gamma)} \| \vec{w}_\FF \cdot \vec{n}\|_{L^2(\gamma)}\nonumber\\ 
    +  \frac{  \mu_\EF\left(4 \alpha d + 12\sqrt{{K}_\PM}\right) }{d\left( \alpha d+4\sqrt{{K}_\PM} \right)}\| \vec{v}_\FF \cdot \vec{\tau} \|_{L^2(\gamma)} \|\vec{w}_\FF \cdot \vec{\tau}\|_{L^2(\gamma)}
     + \frac{\mu \beta_{\max}}{\sqrt{{K}_{\TR}}}\| \vec{v}_\FF\|_{L^2(\gamma)} \| \vec{w}_\FF\|_{L^2(\gamma)}  +    \frac{\lambda_2}{\lambda_1} \| p_\PM \|_{L^2(\gamma)}  \|\vec{w}_\FF \cdot \vec{n}\|_{L^2(\gamma)} \nonumber\\
   +    \frac{\lambda_2}{\lambda_1} \| \vec{v}_\FF\cdot \vec{n} \|_{L^2(\gamma)}\|\varphi_\PM\|_{L^2(\gamma)}   +   \frac{d}{\mu_\EF\lambda_1} \| p_\PM\|_{L^2(\gamma)}  \|\varphi_\PM \|_{L^2(\gamma)}\hspace{+41.5ex}\nonumber
   \\
   %
   \le \left(\mu  + \left( \frac{\mu_\EF\left(\lambda_1^2 -\lambda_2^2\right)}{\lambda_1 d} + \frac{  \mu_\EF\left(4 \alpha d + 12\sqrt{{K}_\PM}\right) }{d\left( \alpha d+4\sqrt{{K}_\PM} \right)}+ \frac{\mu \beta_{\max}}{\sqrt{{K}_{\TR}}}\right)  C_{\gamma_\FF,\FF}^2\right) \|  \vec{v}_\FF \|_{H_\FF} \| \vec{w}_\FF \|_{H_\FF} \hspace{+23.5ex}\nonumber \\
    +    \frac{\lambda_2}{\lambda_1} C_{\gamma_\PM,\PM}  C_{\gamma_\FF,\FF}\| p_\PM \|_{H_\PM}  \|\vec{w}_\FF \|_{H_\FF} 
    +  \frac{\lambda_2}{\lambda_1} C_{\gamma_\PM,\PM}  C_{\gamma_\FF,\FF} \| \vec{v}_\FF \|_{H_\FF}\|\varphi_\PM\|_{H_\PM}  \hspace{+30.5ex}  \nonumber\\
    + \left(\frac{k_{\max, \PM}}{\mu} +  \frac{d}{\mu_\EF\lambda_1}  C_{\gamma_\PM,\PM}^2\right) \| p_\PM\|_{H_\FF}  \|\varphi_\PM \|_{H_\PM}\hspace{+49.8ex}\nonumber\\
    \le C_{\mathcal{A}_\FP}\left( \| \vec{v}_\FF\|_{H_\FF} + \| p_\PM\|_{H_\PM} \right)\left(\| \vec{w}_\FF \|_{H_\FF} +\| \varphi_\PM\|_{H_\PM}\right) \hspace{+49ex} \label{equ:limit_A_fp}
   %
\end{eqnarray}
with 
\begin{eqnarray*}
C_{\mathcal{A}_\FP} :=\max \left\{ \mu  + \left( \frac{\mu_\EF\left(\lambda_1^2 -\lambda_2^2\right)}{\lambda_1 d} + \frac{  \mu_\EF\left(4 \alpha d + 12\sqrt{{K}_\PM}\right) }{d\left( \alpha d+4\sqrt{{K}_\PM} \right)}+ \frac{\mu \beta_{\max}}{\sqrt{{K}_{\TR}}}\right)  C_{\gamma_\FF,\FF}^2, \,   \frac{\lambda_2}{\lambda_1} C_{\gamma_\PM,\PM}  C_{\gamma_\FF,\FF},  \,   \frac{k_{\max, \PM}}{\mu}+\frac{d}{\mu_\EF\lambda_1}  C_{\gamma_\PM,\PM}^2 \right\}.
\end{eqnarray*}
In a similar manner, we estimate
\begin{eqnarray}
        |\mathcal{A}_\gamma\left( \vec{V}; \vec{W}\right)| \le   \frac{ d\mu}{k_{\min,\TR}} \| \vec{V}\|_{L^2(\gamma)} \| \vec{W}\|_{L^2(\gamma)}  + \frac{\mu_\EF\left(\lambda_1^2 -\lambda_2^2\right)}{\lambda_1 d} \| V_{\vec{n}}\|_{L^2(\gamma)} \|W_{\vec{n}}\|_{L^2(\gamma)} +   d\mu_\EF \left\|\frac{\partial V_\vec{n}}{\partial \vec{\tau}} \right\|_{L^2(\gamma)} \left\|\frac{\partial W_{\vec{n}}}{\partial \vec\tau }\right\|_{L^2(\gamma)}  \nonumber\hspace{+2ex}\\
     +    d  \mu_\EF \left\| \frac{\partial V_\vec{\tau}}{\partial \vec{\tau}}\right\|_{L^2(\gamma)} \left\|\frac{\partial W_{\vec{\tau}}}{\partial \vec{\tau}}\right\|_{L^2(\gamma)}+  \frac{\mu_\EF\left(12 \alpha  d + 12\sqrt{K_\PM}\right)}{d \left( \alpha d+4\sqrt{K_\PM}\right)} \| V_\vec{\tau}\|_{L^2(\gamma)}\| W_{\vec{\tau}} \|_{L^2(\gamma)}\nonumber\hspace{+15ex}\\
    \le C_{\mathcal{A}_\gamma} \| \vec{V}\|_{H_\gamma} \| \vec{W}\|_{H_\gamma}, \qquad C_{\mathcal{A}_\gamma} : = \frac{ d\mu}{k_{\min,\TR}} + \frac{\mu_\EF\left(\lambda_1^2 -\lambda_2^2\right)}{\lambda_1 d}+ 2d\mu_\EF+\frac{\mu_\EF\left(12 \alpha  d + 12\sqrt{K_\PM}\right)}{d \left( \alpha d+4\sqrt{K_\PM}\right)} ,\hspace{+1ex} \label{equ:limit_A_gamma}\\
     |\mathcal{B}_\gamma\left(\vec{W}; P\right)|  \le   d \| P\|_{Z_\gamma} \left\|\frac{\partial W_{\vec{\tau}}}{\partial \vec{\tau}} \right\|_{L^2(\gamma)}+   \frac{ \lambda_1 + \lambda_2}{\lambda_1 } \| P\|_{Z_\gamma}  \| W_{\vec{n}}\|_{L^2(\gamma)}\le C_{\mathcal{B}_\gamma } \| P\|_{Z_\gamma} \| \vec{W}\|_{H_\gamma}, \qquad C_{\mathcal{B}_\gamma } : = d + \frac{ \lambda_1 + \lambda_2}{\lambda_1 },\label{equ:limit_B_gamma}\hspace{+0.5ex}
\end{eqnarray}
and
\begin{eqnarray}
    |\mathcal{E}_\gamma \left(P; \Psi\right)| \le \frac{d}{\mu_\EF \lambda_1} \| P\|_{Z_\gamma} \|\Psi \|_{Z_\gamma}= C_{\mathcal{E}_\gamma} \| P\|_{Z_\gamma} \|\Psi \|_{Z_\gamma}, \qquad C_{\mathcal{E}_\gamma}:=\frac{d}{\mu_\EF \lambda_1}.\hspace{+27ex}\label{equ:limit_E_gamma}
    \end{eqnarray}

In the next step, we bound $f_\FP$ defined in \eqref{equ:WeakUncoupledFlow1} and $f_\gamma$, $g_\gamma$ in~\eqref{equ:weakUncoupledgamma}. Considering the Cauchy--Schwarz inequality and the results~\eqref{equ:traceFFTR}, \eqref{equ:tracePM} from the trace theorem, we obtain for $f_\FP$ the upper bound
\begin{eqnarray}
    \left|f_{\FP} (\vec{V}, P; \vec{w}_\FF, \varphi_\PM)\right|
    %
    %
    \le  \left(\frac{\mu_\EF \left(\lambda_1^2-\lambda_2^2\right)}{\lambda_1 d}  + \frac{ \mu_\EF\left(6 \alpha d + 12\sqrt{K_\PM}\right) }{d\left( \alpha d+4\sqrt{{K}_\PM}\right)}  \right)C_{\gamma_\FF,\FF}\| \vec V \|_{H_\gamma} \|\vec{w}_\FF \|_{H_\FF} 
    + \frac{\lambda_1+\lambda_2 }{\lambda_1} C_{\gamma_\FF, \FF} \|P\|_{Z_\gamma}  \|\vec{w}_\FF \|_{H_\FF}\nonumber\\
     +\frac{d}{\mu_\EF\lambda_1}C_{\gamma_\PM,\PM} \|P\|_{Z_\gamma}\|\varphi_\PM\|_{H_\PM} 
     + \frac{\lambda_1 + \lambda_2}{\lambda_1}C_{\gamma_\PM,\PM}\|\vec V \|_{H_\gamma} \|\varphi_\PM\|_{H_\PM}\hspace{+23ex}\nonumber\\
     \le C_{f_\FP} \left( \|\vec V\|_{H_\gamma} +\|P\|_{Z_\gamma} \right)\left( \|\vec{w}_\FF\|_{H_\FF} + \|\varphi_\PM\|_{H_\PM}\right),\hspace{+40.2ex} \label{equ:limit_f_fp}
\end{eqnarray}
where  
\begin{eqnarray*}
    C_{f_\FP} : = \max \left\{ \left(\frac{\mu_\EF \left(\lambda_1^2-\lambda_2^2\right)}{\lambda_1 d}  + \frac{ \mu_\EF\left(6 \alpha d + 12\sqrt{K_\PM}\right) }{d\left( \alpha d+4\sqrt{{K}_\PM}\right)} \right) C_{\gamma_\FF,\FF},\, \frac{\lambda_1+\lambda_2 }{\lambda_1} C_{\gamma_\FF, \FF},\, \frac{d}{\mu_\EF\lambda_1}C_{\gamma_\PM,\PM},\, \frac{\lambda_1 + \lambda_2}{\lambda_1}C_{\gamma_\PM,\PM}\right\}.
\end{eqnarray*}
In a similar way, we obtain the following bound
\begin{eqnarray}
     \left|f_\gamma( \vec{v}_\FF, p_\PM;\vec{W})\right|+ \left|g_\gamma (\vec{v}_\FF, p_\PM;\Psi) \right|
    %
    \le C_{fg_\gamma}\left(\|\vec{v}_\FF \|_{H_\FF}+\| p_\PM \|_{H_\PM} \right)\left(\| \vec W\|_{H_\gamma} + \| \Psi\|_{Z_\gamma} \right), \qquad C_{fg_\gamma}:=C_{f_\FP}.\label{equ:limit_fg_gamma}
\end{eqnarray}
%
Taking into account the upper bounds of the bilinear operators $\mathcal{A}_\FP$, $f_\FP$, $\mathcal{A}_\gamma$, $\mathcal{B}_\gamma$, $\mathcal{E}_\gamma$, $f_\gamma$, $g_\gamma$ from Eqs.~\eqref{equ:limit_A_fp}--\eqref{equ:limit_fg_gamma}  and inequality~\eqref{equ:quadraticrule}, the continuity of $\mathcal{A}^\RE$ is ensured
\begin{eqnarray*}
    \left|\mathcal{A}^\RE \left( \vec{\zeta}^\RE ; \vec{\theta}^\RE \right)\right| 
      &\le& \frac{ C_{\mathcal{A}^\RE}}{4}\left(\| \vec{v}_\FF\|_{H_\FF} +\| p_\PM\|_{H_\PM} +\| \vec{V}\|_{H_\gamma} +\|P\|_{Z_\gamma} \right)\left(\| \vec{w}_\FF\|_{H_\FF} +\| \varphi_\PM\|_{H_\PM}+\| \vec{W}\|_{H_\gamma}+\|\Psi\|_{Z_\gamma}  \right)\\
     &\le& C_{\mathcal{A}^\RE} \|\vec \zeta^\RE\|_{\mathcal{H}^\RE} \|\vec \theta^\RE \|_{\mathcal{H}^\RE},\qquad
         C_{\mathcal{A}^\RE}:=4\max \left\{ C_{\mathcal{A}_\FP}, \, C_{f_\FP},\, C_{\mathcal{A}_\gamma} , \, C_{\mathcal{E}_\gamma},\, C_{\mathcal{B}_\gamma } 
    \right\}.
\end{eqnarray*}

In the last step, we prove the coercivity of $\mathcal{A}^\RE$ on $\mathit{Kern}(\mathcal{B}^\RE)=\{ \vec{\theta}^\RE \in \mathcal{H}^\RE | \nabla \cdot \vec{w}_\FF = 0 \}$ as follows
\begin{eqnarray}
    \mathcal{A}^\RE (\vec{\zeta}^\RE;\vec{\zeta}^\RE)
    \ge  \mathcal{A}_{\FP}(\vec{v}_\FF, p_\PM; \vec{v}_\FF, p_\PM)\!+\!\mathcal{A}_\gamma\left( \vec{V}; \vec{V}\right) \!+\! \mathcal{E}_\gamma \left(P, P \right)
    -\left|f_{\FP} (\vec{V}, P; \vec{v}_\FF, p_\PM)\!+\! f_\gamma( \vec{v}_\FF, p_\PM;\vec{V}) \!-\! g_\gamma (\vec{v}_\FF, p_\PM;P) \right| . \label{equ:reduceddimcoercivity}
\end{eqnarray}
Application of the ellipticity conditions \eqref{equ:boundedK}, \eqref{equ:boundedbeta} to the bilinear terms $\mathcal{A}_\FP$, $\mathcal{A}_\gamma$, $\mathcal{E}_\gamma$ in~\eqref{equ:reduceddimcoercivity} yields
\begin{eqnarray}
    \mathcal{A}_{\FP} (\vec{v}_\FF, p_\PM; \vec{v}_\FF, p_\PM)+\mathcal{A}_\gamma\left( \vec{V}; \vec{V}\right) + \mathcal{E}_\gamma \left(P, P \right) \ge \mu \| \nabla \vec{v}_\FF \|^2_{L^2(\Omega_\FF)} + \frac{k_{\min,\PM}}{\mu} \|\nabla p_\PM\|^2_{L^2(\Omega_\PM)} 
    + \frac{\mu_\EF\left(\lambda_1^2 -\lambda_2^2\right)}{ \lambda_1 d} \| \vec{v}_\FF \cdot \vec{n} \|^2_{L^2(\gamma)}\nonumber\\
    +\frac{\mu_\EF\left(4 \alpha  d + 12\sqrt{K_\PM}\right)}{d \left( \alpha d+4\sqrt{K_\PM}\right)}\| \vec{v}_\FF \cdot \vec{\tau} \|^2_{L^2(\gamma)} + \frac{\mu \beta_{\min}}{\sqrt{K_\TR}}\| \vec{v}_\FF\|^2_{L^2(\gamma)}
    + \frac{d}{\mu_\EF\lambda_1} \| p_\PM\|^2_{L^2(\gamma)} +\frac{d \mu}{k_{\max, \TR}} \| \vec{V}\|^2_{L^2(\gamma)} \nonumber\hspace{+4.7ex}\\
    +\frac{\mu_\EF\left(\lambda_1^2 - \lambda_2^2\right)}{\lambda_1 d} \|V_\vec{n} \|^2_{L^2(\gamma)} + d\mu_\EF \left\| \frac{\partial \vec{V}}{\partial \vec{\tau}}\right\|^2_{L^2(\gamma)}  
    + \frac{\mu_\EF\left(12 \alpha  d + 12\sqrt{K_\PM}\right)}{d \left( \alpha d+4\sqrt{K_\PM}\right)} \|V_\vec{\tau} \|^2_{L^2(\gamma)} +\frac{d}{\mu_\EF\lambda_1} \| P\|^2_{L^2(\gamma)}.  \label{equ:reduceddimcoercivity1part}
\end{eqnarray}
Applying the Cauchy--Schwarz and the generalised Young,   
$ 2ab \le a^2/\delta + \delta b^2\textnormal{ for } \delta>0$, 
inequalities to the remaining terms in~\eqref{equ:reduceddimcoercivity}, we get
\begin{eqnarray}
    \left| f_{\FP} (\vec{V}, P; \vec{v}_\FF, p_\PM)+f_\gamma( \vec{v}_\FF, p_\PM;\vec{V} ) - g_\gamma  (\vec{v}_\FF, p_\PM;P) \right|
      \le 2 \frac{\mu_\EF\left(\lambda_1^2-\lambda_2^2\right)}{\lambda_1 d} \left|\int_\gamma \left(\vec{v}_\FF \cdot \vec{n}|_{\gamma_\FF}\right)V_{\vec{n}}~\mathrm{d} s \right|\nonumber \hspace{+17ex}\\ 
      +2\frac{d}{\mu_\EF\lambda_1}\left|\int_\gamma \left(p_\PM |_{\gamma_\PM}\right)P~\mathrm{d} s\right|  + 2 \frac{\mu_\EF\left(6 \alpha d +12\sqrt{K_\PM}\right)}{d \left(\alpha d+4\sqrt{K_\PM}\right)} \left|\int_\gamma\left(\vec{v}_\FF \cdot \vec{\tau} |_{\gamma_\FF}\right) V_{\vec{\tau}}~\mathrm{d} s\right| \nonumber\hspace{+21ex}\\
      \le \frac{d}{\mu_\EF\lambda_1} \left(\frac{1}{\delta}\| p_\PM \|_{L^2(\gamma)}^2 + \delta \| P\|^2_{L^2(\gamma)}\right) + \frac{\mu_\EF\left(\lambda_1^2-\lambda_2^2\right)}{\lambda_1 d} \left( \frac{1}{\delta_1}\| \vec{v}_\FF \cdot \vec{n}\|_{L^2(\gamma)}^2+\delta_1 \| V_{\vec{n}}\|^2_{L^2(\gamma)} \right) 
      \nonumber\hspace{+18ex}\\ 
      +  \frac{6 \alpha \mu_\EF }{ \alpha d+4\sqrt{K_\PM}} \left( \frac{1}{\delta_2}\left\|\vec{v}_\FF \cdot \vec{\tau}  \right\|^2_{L^2(\gamma)}+ \delta_2 \left\|V_\vec{\tau}  \right\|^2_{L^2(\gamma)}\right)
      +  \frac{12\mu_\EF\sqrt{K_\PM}}{d \left(\alpha d+4\sqrt{K_\PM}\right)} \left( \frac{1}{\delta_3}\left\|\vec{v}_\FF \cdot \vec{\tau}  \right\|^2_{L^2(\gamma)}+ \delta_3 \left\|V_\vec{\tau}  \right\|_{L^2(\gamma)}^2\right),\label{equ:reduceddimcoercivity2part}
\end{eqnarray}
for $\delta, \delta_1, \delta_2, \delta_3 >0$. We choose $\delta_1=\delta_3=1$ and $\delta_2 =2$ in~\eqref{equ:reduceddimcoercivity2part}. Substituting~\eqref{equ:reduceddimcoercivity1part} and \eqref{equ:reduceddimcoercivity2part} into \eqref{equ:reduceddimcoercivity}, taking into account the trace theorem result~\eqref{equ:tracePM} and the Poincar{\'e} inequality \eqref{equ:auxiliarypoincare}, we obtain
\begin{eqnarray}
    \mathcal{A}^\RE (\vec{\zeta}^\RE;\vec{\zeta}^\RE)\ge\mu \| \nabla \vec{v}_\FF \|^2_{L^2(\Omega_\FF)}+ \frac{k_{\min,\PM}}{\mu} \|\nabla p_\PM\|^2_{L^2(\Omega_\PM)}+
    \frac{d \mu}{k_{\max, \TR}} \| \vec{V}\|^2_{L^2(\gamma)}
    + d\mu_\EF \left\| \frac{\partial \vec{V}}{\partial \vec{\tau}}\right\|^2_{L^2(\gamma)} 
    +(1-\delta)\frac{d}{\mu_\EF\lambda_1} \| P\|^2_{L^2(\gamma)}\nonumber\hspace{+3ex}\\
    + \frac{\mu\beta_{\min}}{\sqrt{K_\TR}} \| \vec{v}_\FF\|^2_{L^2(\gamma)} +\frac{\alpha   \mu_\EF}{ \alpha d+4\sqrt{K_\PM}}\| \vec{v}_\FF \cdot \vec{\tau} \|^2_{L^2(\gamma)} 
    +\frac{d}{\mu_\EF\lambda_1 } \| p_\PM\|^2_{L^2(\gamma)} -\frac{1}{\delta}\frac{d C^2_{\gamma_\PM, \PM}}{\mu_\EF\lambda_1} \| p_\PM\|^2_{H_\PM} \nonumber\hspace{+10ex}\\
    \ge  \frac{\mu}{\tilde{C}_{P,\FF}}\|  \vec{v}_\FF \|^2_{H_\FF} + \left(\frac{k_{\min,\PM}}{\mu\tilde{C}_{P,\PM}}  -\frac{1}{\delta}\frac{d C^2_{\gamma_\PM, \PM}}{\mu_\EF\lambda_1} \right)\| p_\PM\|^2_{H_\PM} + d\min \left\{ \frac{ \mu}{k_{\max, \TR}} ,  \mu_\EF \right\} \| \vec{V}\|^2_{H_\gamma}
    +(1-\delta)\frac{d}{\mu_\EF\lambda_1} \| P\|^2_{Z_\gamma}. \label{equ:reducedmodelcoercivityEnd}
\end{eqnarray}
For $0<\delta<1$ and small enough thickness $0<d<d^*$ such that 
$d^*:=\left(\delta  \mu_\EF\lambda_1 k_{\min,\PM}\right) \Big/ \left(\mu\tilde{C}_{P,\PM}C^2_{\gamma_\PM, \PM}\right)$,
the coercivity of $\mathcal{A}^\RE$ is guaranteed
\begin{eqnarray*}
    \mathcal{A}^\RE(\vec{\zeta}^\RE; \vec{\zeta}^\RE) \ge     C^*_{\mathcal{A}^\RE}\| \vec{\zeta}^\RE\|^2_{\mathcal{H}^\RE},\qquad C^*_{\mathcal{A}^\RE}: =\min \left\{  \frac{\mu}{\tilde{C}_{P,\FF}}, \, 
    \left(\frac{k_{\min,\PM}}{\mu\tilde{C}_{P,\PM}}  -\frac{1}{\delta}\frac{d C^2_{\gamma_\PM, \PM}}{\mu_\EF\lambda_1} \right), \, \frac{ d\mu}{k_{\max, \TR}} ,  d\mu_\EF  , \, \frac{(1-\delta)d}{\mu_\EF\lambda_1}  \right\}>0.
\end{eqnarray*}
This completes the proof. 
\end{proof} 

\section{Numerical simulation results}
\label{sec:numresults}
In this section, we report numerical simulation results for the proposed full- and reduced-dimensional models and validate both models against the analytical solutions. \Rthree{Additionally, we perform a comparative analysis of the two proposed models in the context of an industrial filtration problem.} In the full-dimensional case, the models in all three flow regions are discretised by the second-order finite volume method (MAC scheme). We consider uniform rectangular grids that conform on the top and on the bottom of the transition zone. For the reduced-dimensional model, we keep the same discretisations in the free-flow and porous-medium subdomains, and apply the second-order finite difference method at the interface $\gamma$. The two flow problems are solved monolithically using our in-house implementation.

The coupled flow domain is set as $\Omega = [0,L_x]\times[0,L_y]$ with interfaces $\gamma_\FF = \left(0,L_x\right)\times \{y_{\gamma_\FF}\}$ and $\gamma_\PM = \left(0,L_x\right)\times \{y_{\gamma_\PM}\}$. 
The unit tangential and normal vectors are $\vec{\tau}=\vec{e}_1$ and $\vec{n} = \vec{e}_2$ (Fig.~\ref{fig:Fig1intro}). The following physical parameters are chosen $\mu= 1$, $\mu_\EF=1$, $\alpha = 0.1$ and $\vec{\beta}=\vec{0}$. We consider isotropic, homogeneous transition zone and porous medium with the permeability tensors $\ten{K}_\TR = \ten{K}_\PM = 10^{-2}\ten{I}$, and hence the permeability references  $K_\TR=K_\PM=10^{-2}$.


\subsection{Full-dimensional model}
In the first test case, we conduct numerical convergence study of the full-dimensional model \eqref{equ:stokes1}--\eqref{equ:TRBC}, \eqref{equ:darcyupdated}--\eqref{equ:BJS-ICPM}. We choose the analytical solution as follows
\begin{eqnarray}
        u_\FF=\cos{ \left(x_1 \right) } \exp{( x_2-y_{\gamma_\PM})},\hspace{+2.4ex} v_\FF =\sin{\left( x_1\right)} \exp{(x_2-y_{\gamma_\PM})}, \hspace{+2ex}   p_\FF\, =\, \sin{( x_1 + x_2 - y_{\gamma_\PM})},\nonumber\hspace{+3.7ex}\\
        u_\TR = \cos{ (x_1 ) } \exp{( x_2-y_{\gamma_\PM})},\hspace{+2.4ex} 
        v_\TR = \sin{( x_1)} \exp{(x_2-y_{\gamma_\PM})}, \hspace{+2.2ex} 
        p_\TR \,=\,\sin{(x_1 + x_2 - y_{\gamma_\PM})},\hspace{+3.6ex}\label{equ:exactsolution}\\
        p_\PM =-100(x_2-y_{\gamma_\PM})\sin{( x_1)},\nonumber
\end{eqnarray}
where $\vec{v}_\FF= \left(u_\FF, v_\FF\right)^\top $ and $\vec{v}_\TR=\left(u_\TR, v_\TR\right)^\top$. Note that Eqs.~\eqref{equ:exactsolution} satisfy the mass conservation equations~\eqref{equ:stokes1}, \eqref{equ:Brinkman1} and the interface conditions \eqref{equ:continuityICFF}--\eqref{equ:BJS-ICPM}.
\begin{figure}[h!]
    \centering
    \includegraphics[scale = 0.75]{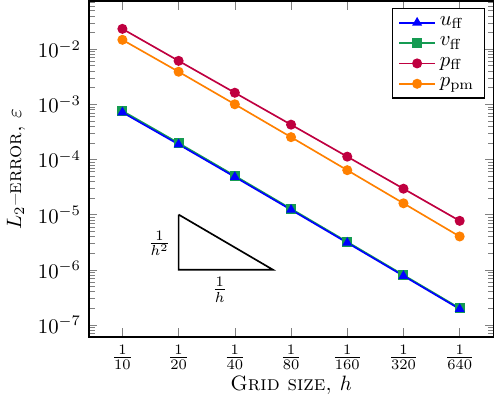}\hspace{4ex} \includegraphics[scale=0.75]{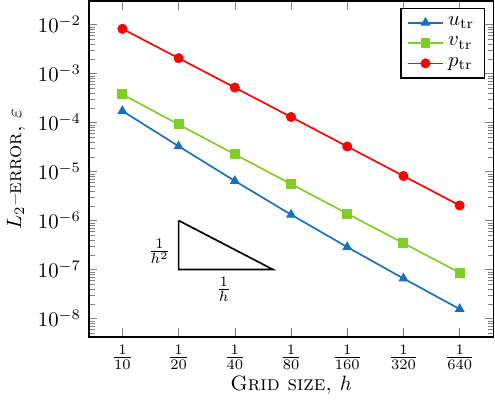}
    \caption{Second-order convergence for the full-dimensional model}
    \label{fig:convergenceanalysisfull}
\end{figure}

The size of the computation domain is $L_x = 1$ and $L_y=2$, where the positions of the interfaces are $y_{\gamma_\PM}=0.9$ and $y_{\gamma_\FF}=1.1$ leading to the transition region thickness $d=0.2$. We choose the Dirichlet boundary conditions on the outer boundaries $\Gamma_\FF$, $\Gamma_\TR$ and $\Gamma_\PM$ (Fig.~\ref{fig:Fig1intro}, left). 
Substituting the exact solution \eqref{equ:exactsolution} with the chosen parameters into Eqs.~\eqref{equ:stokes2}, \eqref{equ:FFBC}, \eqref{equ:Brinkman2}, \eqref{equ:TRBC}, \eqref{equ:darcyupdated} and \eqref{equ:PMBC}, we obtain the source terms and the corresponding boundary conditions.

For the convergence analysis, we compare the numerical simulation results against the analytical solution for all primary variables
\begin{equation}
    \varepsilon_f = \| f - f_h\|_{L^2(\Omega)}, \quad f \in \{u_\FF, v_\FF,p_\FF,  u_\TR, v_\TR, p_\TR, p_\PM\}, \label{equ:L2errorFull}
\end{equation}
where $f$ and $f_h$ denote the analytical and numerical solutions, respectively. 
We report numerical results in Fig.~\ref{fig:convergenceanalysisfull}. They confirm convergence of the discretisation scheme with the second order in all three regions.

\subsection{Reduced-dimensional model}
For the reduced-dimensional model \eqref{equ:stokes1}--\eqref{equ:FFBC}, \eqref{equ:darcyupdated}, \eqref{equ:PMBC}, \eqref{equ:averagedMass}, \eqref{equ:averagedMomentumnormal}, \eqref{equ:averagedMomentumtangentialupdate}, \eqref{equ:averageDirichlet}, \eqref{equ:averageNeuman} with the transmission conditions \eqref{equ:transmissionFFnormal}--\eqref{equ:transmissionPM}, we choose the same analytical solution \eqref{equ:exactsolution} in the free-flow and porous-medium subdomains. The exact solution $\left(U,V,P\right)$ on $\gamma$ is obtained by averaging the exact solution $(u_\TR,v_\TR, p_\TR)$ given in~\eqref{equ:exactsolution} across the transition zone
\begin{eqnarray}
    U = \frac{\cos{ \left(x_1 \right) }}{d} \left(\exp{(d)}-1\right), \quad
    V = \frac{\sin{ \left(x_1 \right) }}{d} \left(\exp{(d)}-1\right),\quad 
    P = -\frac{1}{d}\left(\cos{\left(x_1 + d\right)}-  \cos{\left(x_1 \right)}\right).
\end{eqnarray}
The geometry of the computational domain is $L_x = 1$, $L_y=1+d$, $\, y_{\gamma_\PM}=0.5$ and $y_{\gamma_\FF}=0.5+d$, where a narrow transition zone with $d=5\cdot10^{-4}$ is considered. We choose $\lambda_1=4$ and $\lambda_2=2$, which refer to the quadratic profile of the normal velocity across the transition region (Table~\ref{tab:closureconditionnormal}). The Neumann conditions are imposed on $\partial \gamma$ as well as on the left and right sides of $\Gamma_\FF$~(Fig~\ref{fig:Fig1intro}, right). The Dirichlet conditions are set on the remaining outer boundaries. We obtain the averaged source terms $F_{\vec{n}}$, $F_{\vec{\tau}}$ in \eqref{equ:averagedMomentumnormal} and \eqref{equ:averagedMomentumtangentialupdate} by averaging $\vec{f}_\TR$ from Eq.~\eqref{equ:Brinkman2} across the transition region.

In addition to the $L_2$-errors for $u_\FF$, $v_\FF$, $p_\FF$ and $p_\PM$ as in \eqref{equ:L2errorFull},  we compute the errors for the averaged velocity and pressure
\begin{equation}
    \varepsilon_f = \| f - f_h\|_{L^2(\gamma)}, \quad f \in \{U, V, P\},
\end{equation}
where $f$ is the averaged analytical solution and $f_h$ denotes the numerical solution on the complex interface $\gamma$.
The results presented in Fig.~\ref{fig:convergenceanalysisreduced} confirm second-order convergence of the discretisation scheme for the reduced-dimensional model as well, and validate the proposed model with the derived transmission conditions.
\begin{figure}[h!]
    \centering
    \includegraphics[scale = 0.75]{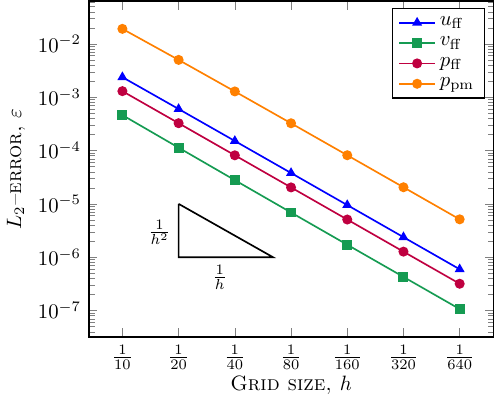}\hspace{4ex} \includegraphics[scale=0.75]{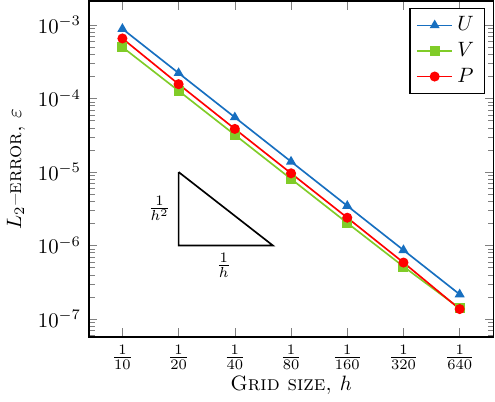}
    \caption{Second-order convergence for the reduced-dimensional model}
    \label{fig:convergenceanalysisreduced}
\end{figure}

\Rthree{\subsection{Filtration problem}
In the last test case, we investigate the proposed full- and hybrid-dimensional Stokes--Brinkman--Darcy models based on a typical industrial filtration problem. Here, the fluid enters from the bottom of the porous medium, passes through the transition region, and exits through the free-flow domain. In this problem, the flow is arbitrary near the fluid--porous interface. The size of the flow domain is $L_x= 1$ and $L_y = 1.005$, where the positions of the top and the bottom of the transition region are $y_{\gamma_\PM}=0.5$ and $y_{\gamma_\FF} = 0.505$ and the thickness is $d=0.005$. The grid size is fixed $h=h_x = h_y =1/800$. In contrast to the previous test cases with analytical solutions, we consider here relevant physical parameters: $\mu = \mu_\EF = 10^{-3}$, $\alpha = 1$, $\vec{\beta}=\vec{0}$, $\ten{K}_\TR= 10^{-3}\ten{I}$, and $\ten{K}_\PM=10^{-8}\ten{I}$. The source terms are $\vec{f}_\FF=\vec{0}$, $\vec{f}_\TR=\vec{0}$ and $q=0$.

The boundary conditions in  this test case are schematically presented in  Fig.~\ref{fig:FilteredProblem} (left). On the ``wall" boundaries $\Gamma_{\FF, \mathrm{wall}}= \left(\{0\}\times [0.505, 1.005]\right)\cup\left( [0,1] \times \{1.005\}\right)$,  $\Gamma_{\TR, \mathrm{wall}}= \left(\{0\}\times [0.5, 0.505]\right)\cup \left(\{1\}\times [0.5, 0.505]\right)$, $\partial \gamma_\mathrm{wall} = \{(0,0.5025), (1,0.5025)\}$ and $\Gamma_{\PM, \mathrm{wall}}= \left( \{0\} \times [0,0.5]\right) \cup \left( [0,0.25] \times \{0\}\right)\cup \left( [0.75,1]\times \{0\}\right) \cup\left(\{1\} \times [0,0.5] \right) $, we consider the no-slip boundary conditions 
\begin{equation}
    \vec{v}_\FF = \vec{0} \quad \textnormal{on } \Gamma_{\FF, \mathrm{wall}}, \quad  \vec{v}_\TR = \vec{0} \quad \textnormal{on } \Gamma_{\TR, \mathrm{wall}}, \quad \vec{V} = \vec{0} \quad \textnormal{on } \partial \gamma_\mathrm{wall},  \quad \textnormal{and} \quad \vec{v}_\PM \cdot \vec{n}_\PM = 0 \quad \textnormal{on } \Gamma_{\PM, \mathrm{wall}}. 
\end{equation}
On the bottom of the porous-medium region, a parabolic ``inflow" boundary conditions are taken into account 
\begin{equation}
    \vec{v}_\PM \cdot \vec{n}_\PM= -0.1(x_1 - 0.25)(x_1-0.75) \quad \textnormal{on } \Gamma_{\PM, \mathrm{in}} = [0.25, 0.75] \times \{0\}. 
\end{equation}
On the right external boundary of the free-flow region, the ``do-nothing" boundary conditions 
\begin{equation}
    \ten{T}(\vec{v}_\FF, p_\FF) \cdot \vec{n}_\FF = \vec{0}\quad \textnormal{on }\Gamma_{\FF, \mathrm{dn}} = \{1\} \times [0.505, 1.005], 
\end{equation}
are imposed, allowing the flow to exit the domain naturally. The velocity magnitude and streamlines for the reduced-dimensional model are shown in Fig.~\ref{fig:FilteredProblem} (right). The simulation results for the full-dimensional model are visually nearly identical, therefore, we omit the corresponding plot and focus on the quantitative differences instead.
\begin{figure}[h!]
    \centering
    \includegraphics[scale =1]{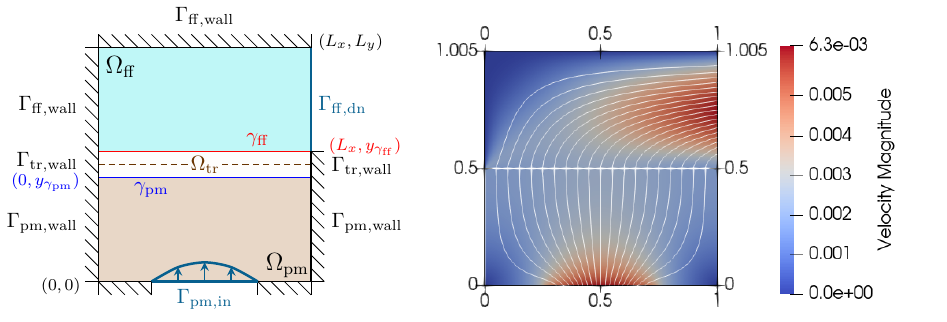}
    \caption{Geometric setting of the filtration problem (left) and velocities for the reduced-dimensional model (right)}
  \label{fig:FilteredProblem}
\end{figure}

To compare the full- and reduced-dimensional models, we first compute the averaged velocities and pressure of the full-dimensional model across the transition region
\begin{equation}
u^\AV_\TR :=  \frac{1}{d}  \int^{y_{\gamma_\FF}}_{y_{\gamma_\PM}} u_\TR \,\mathrm{d} x_2, \quad  v^\AV_\TR :=  \frac{1}{d}  \int^{y_{\gamma_\FF}}_{y_{\gamma_\PM}} v_\TR \,\mathrm{d} x_2, \quad  p^\AV_\TR :=  \frac{1}{d}  \int^{y_{\gamma_\FF}}_{y_{\gamma_\PM}} p_\TR \,\mathrm{d} x_2.
\end{equation}
Then, we compare these values with the solutions $U, V, P$ of the reduced-dimensional model on the complex interface~$\gamma$ as follows
\begin{equation}
    \varepsilon_{u}^r = \frac{\| u^\AV_\TR  - U  \|_{L^2(\gamma)}}{\|u^\AV_\TR\|_{L^2(\gamma)}},   \quad  \varepsilon_{v}^r = \frac{\| v^\AV_\TR  - V \|_{L^2(\gamma)}}{\|v^\AV_\TR\|_{L^2(\gamma)}}, \quad \textnormal{and}\quad    \varepsilon_{p}^r = \frac{\| p^\AV_\TR  - P  \|_{L^2(\gamma)}}{\|p^\AV_\TR\|_{L^2(\gamma)}}.
\end{equation}
The relative deviations are presented in Table~\ref{tab:relativeDEV} for different choices of the normal velocity profile (Table~\ref{tab:closureconditionnormal}). The numerical simulation results with quadratic normal velocity profile yield the smallest relative deviations. 
In addition, we provide the comparison of the normal and tangential velocity profiles on $\gamma_\FF$ ($x_2=0.505$) for both models in Fig.~\ref{fig:velocitycptfullreduced}. Here, the quadratic assumption for the normal velocity profile ($\lambda_1=4$, $\lambda_2 = 2$) is applied for the reduced-dimensional model.

\begin{table}[h]
\begin{center}
\begin{tabular}{r@{\qquad}r@{\quad}r@{\quad}r@{\quad}r@{\quad}r@{\quad}r@{\quad}r}
\hline
\multicolumn{1}{c}{\rule{0pt}{12pt}Profiles}&\multicolumn{1}{c}{$\varepsilon^r_u$}&\multicolumn{1}{c}{$\varepsilon^r_v$}&\multicolumn{1}{c}{$\varepsilon^r_p$}\\[2pt]
\hline\rule{0pt}{12pt}
Linear $(\lambda_1 = 2, \lambda_2 =0)$  & 8.0680e-2 & 3.1463e-2 & 1.3016e-1 \\
Piecewise linear  $(\lambda_1 = 3, \lambda_2 =1)$ &3.5585e-2 & 2.2407e-2 & 9.1480e-2 \\
Quadratic  $(\lambda_1 = 4, \lambda_2 =2)$  & 2.8920e-2 & 1.8101e-2 &7.1120e-2\\
\hline
\end{tabular}
\end{center}
\caption{Relative derivations for different normal velocity profiles}\label{tab:relativeDEV}
\end{table}

\begin{figure}[h!]
    \centering
    \includegraphics[scale = 0.75]{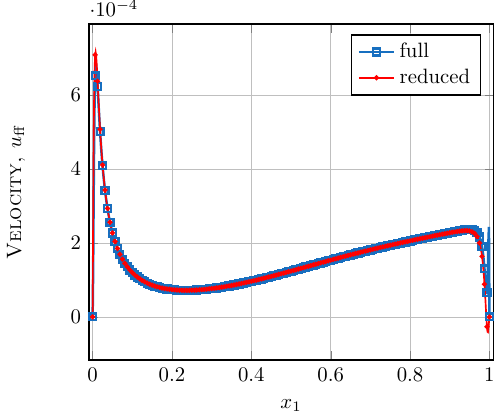}\hspace{4ex} \includegraphics[scale=0.75]{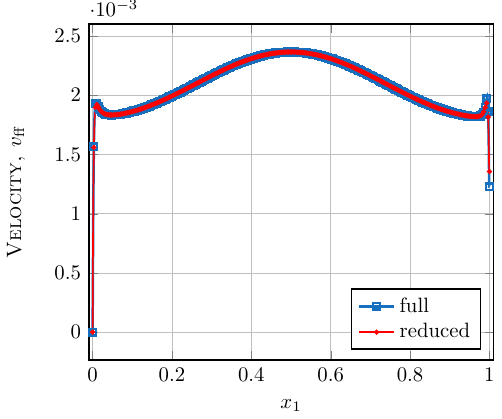}
    \caption{Normal (left) and tangential (right) velocity profiles at $x_2= 0.505$ for the coupled full-dimensional and dimensionally reduced models}
    \label{fig:velocitycptfullreduced}
\end{figure}
}
\Rthree{Although the numerical simulation results obtained from the full-dimensional and reduced-dimensional models are very similar, their computational times differ significantly. CPU time for the full-dimensional model is 359.58~[s], compared to 228.18~[s] for the reduced-dimensional model. This test case demonstrates 36.55\% decrease in CPU time when using the reduced-dimensional model. The computations are carried out on a personal computer with a 12\textsuperscript{th} Gen Intel\textsuperscript{\textregistered} Core\texttrademark{} i7-1255U processor and 32.0\,GB RAM using our in-house C++ code.}

\section{Conclusions}
\label{sec:conclusion}
In this work, we propose the full-dimensional Stokes--Brinkman--Darcy model and derive its dimensionally reduced counterpart to describe fluid flows in joint plain-fluid and porous-medium regions. The full-dimensional model is composed of the equi-dimensional Stokes equations, the Brinkman equations and Darcy's law in the respective flow domains. To couple the equations in the transition zone and in the free flow, continuity of velocity and  stress jump conditions are imposed on the top of the transition region. On the bottom of the transition zone, we apply the classical set of coupling conditions using the Beavers--Joseph--Saffman concept.

Due to multiphysics nature of the coupled flows, the full-dimensional model requires fine grids in the transition zone.  Therefore, we derive the dimensionally reduced model, which is an effective approximation of the proposed full-dimensional model. In the reduced-dimensional formulation, the transition region thickness is significantly smaller in comparison to the length of the entire flow domain. Thus, we represent the transition region as a complex interface, which is able to store and transport mass and momentum. We average the Brinkman equations across the transition region that yields the model of co-dimension one, and consider the appropriate transmission conditions to get the closed model formulation. The transmission conditions are derived using \emph{a priori} hypotheses on the pressure and velocity profiles in the transition zone. A constant pressure profile and a quadratic tangential velocity profile are applied in this work. For the normal velocity component, we consider three cases: a linear, a piecewise linear and a quadratic profile.

We conduct the well-posedness analysis for both models by verifying the Babu\v{s}ka--Brezzi conditions and prove existence and uniqueness of weak solutions for the two Stokes--Brinkman--Darcy problems. In the hybrid-dimensional case, there is a restriction on the transition region thickness depending on the physical parameters. Some numerical simulations for the proposed models are reported. They demonstrate the second-order convergence of the chosen discretisation schemes and indicate that the models in the transition zone are capable to describe fluid flow between the free-flow and porous-medium domains.
\Rthree{Furthermore, we conduct a comparison study of the full-dimensional and reduced-dimensional models proposed in this work in the context of industrial filtration.} \Rone{ Comparison study} of the dimensionally reduced model \Rone{ with} alternative interface conditions available in the literature is \Rone{ presented in our recent  work~\cite{Ruan_Rybak_24}. In that study, we validated the hybrid-dimensional Stokes–Brinkman–Darcy model proposed in this work against the pore scale resolved model considering multiple test cases. Additionally, we compared numerical simulation results obtained using the hybrid-dimensional model with those from the Stokes--Darcy model employing both classical and generalised interface conditions. The proposed hybrid-dimensional Stokes--Brinkman--Darcy model demonstrated its applicability to describe arbitrary flows in coupled systems, showed its advantages in comparison to the classical coupling conditions and slightly more accurate results in comparison to the generalised conditions.} Further extension of this study involves the Navier--Stokes--Brinkman--Darcy models to handle multi-dimensional inertial flows in fluid--porous systems.

\appendix

\section{Auxiliary results}
\renewcommand{\thesection}{\Alph{section}}
\label{sec:appB}
We introduce some useful inequities as well as the well-posedness theorem~\cite[Cor.~I.4.1]{girault2012finite}, the trace theorem~\cite[Thm.~I.1.5]{girault2012finite} and the Poincar{\'e} theorem~\cite[Thm.~I.1.1]{girault2012finite} used in the analysis of the proposed coupled models. 

The following inequalities hold true
\begin{eqnarray}
    (a+b)^2 \le 2 (a^2 + b^2),\qquad
    (a+b+c+d)^2 \le  4 \left( a^2 + b^2 +c^2+d^2\right), \quad \forall a, b, c ,d \in \mathbb{R}^+_0.\label{equ:quadraticrule}
\end{eqnarray}

\begin{appendixtheorem}[Well-posedness] \label{thm:wellposedness}Let $\mathcal{H}$ and $\mathcal{Z}$ be two Hibert spaces with the norms $\|\cdot\|_{\mathcal{H}}$ and $\|\cdot\|_{\mathcal{Z}}$, respectively. 
We consider the following variational problem for the bilinear operators $\mathcal{A}:\mathcal{H} \times \mathcal{H} \to \mathbb{R}$, $\mathcal{B}: \mathcal{H}\times \mathcal{Z} \to \mathbb{R}$ and the linear functional $\mathcal{L}: \mathcal{H}\to \mathbb{R}:$ \par
    Find $\vec \zeta \in \mathcal{H}$ and $\vec \xi \in \mathcal{Z}$ such that 
    \begin{subequations}
    \begin{eqnarray}
        \mathcal{A}(\vec{\zeta}; \vec{\theta}) + \mathcal{B}(\vec{\theta}; \vec \xi ) &=& \mathcal{L} \left(\vec{\theta}\right),    \hspace{+1ex} \forall \vec{\theta} \in \mathcal{H}, \label{equ:thmwellposed1}   \\
        \mathcal{B} (\vec{\zeta}; \vec \psi) &=& 0, \hspace{+5ex}  \forall \vec \psi \in \mathcal{Z}.     \label{equ:thmwellposed2} 
\end{eqnarray}\label{equ:thmwellposed} 
    \end{subequations}
Problem \eqref{equ:thmwellposed} is well-posed if and only if the following 
conditions are fulfilled:
\begin{itemize}
    \item $\mathcal{A}$ and $\mathcal{B}$ are continuous:
    \begin{eqnarray}
        | \mathcal{A}(\vec{\zeta}; \vec{\theta})| \le C_{\mathcal{A}}\| \vec{\zeta}\|_{\mathcal{H}} \|  \vec{\theta}\|_{\mathcal{H}},& C_{\mathcal{A}}>0, &  \forall \vec{\zeta},  \vec{\theta}\in \mathcal{H}, \label{continuous1}\\ 
        | \mathcal{B}(\vec{\zeta}; \vec \psi) |\le C_{\mathcal{B}}\| \vec{\zeta}\|_{\mathcal{H}} \| \vec \psi\|_\mathcal{Z},& C_{\mathcal{B}}>0, & \forall   \hspace{+1ex}\vec{\zeta}  \in \mathcal{H}, \,\hspace{+1ex}  \vec\psi \in \mathcal{Z}, \label{continuous2}
    \end{eqnarray}
    \item $\mathcal{A}$ is coercive on $\mathit{Kern}(\mathcal{B})$:
    \begin{eqnarray}
        \mathcal{A}(\vec{\zeta}; \vec{\zeta}) \ge C^*_{\mathcal{A}}\| \vec{\zeta}\|_{\mathcal{H}}^2, \quad C^*_{\mathcal{A}}>0, \quad \forall \vec{\zeta} \in \mathcal{H}, \label{coecitive}
    \end{eqnarray}
    \item $\mathcal{B}$ is inf-sup-stable:
    \begin{equation}
        \inf_{\vec \psi\in \mathcal{Z}} \sup_{\vec{\zeta}\in \mathcal{H}} \frac{ \mathcal{B}(\vec{\zeta}; \vec \psi) }{\| \vec{\zeta} \|_{\mathcal{H}} \| \vec\psi \|_\mathcal{Z}} \ge C^*_{\mathcal{B}}, \quad C^*_{\mathcal{B}}>0. \label{inf-sup}
    \end{equation}
\end{itemize}
\end{appendixtheorem}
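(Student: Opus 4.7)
\textbf{Plan of proof for Theorem~\ref{thm:reduceddimwellposedness}.}
The strategy is to apply the abstract Babu\v{s}ka--Brezzi result (Theorem~\ref{thm:wellposedness}) to the saddle-point problem \eqref{eq:weakformulationofCoupledreducedproblem}, verifying the four conditions \eqref{continuous1}--\eqref{inf-sup} for the bilinear forms $\mathcal{A}^\RE$ and $\mathcal{B}^\RE$. Since $\mathcal{B}^\RE(\vec{\zeta}^\RE;\psi_\FF)=\mathcal{B}_\FP(\vec{v}_\FF,p_\PM;\psi_\FF) = -\int_{\Omega_\FF}(\nabla\cdot\vec{v}_\FF)\psi_\FF\,\mathrm{d}\vec{x}$ depends only on the free-flow velocity component of $\vec{\zeta}^\RE$, its continuity \eqref{continuous2} follows immediately from Cauchy--Schwarz, and its inf-sup stability \eqref{inf-sup} is obtained by exactly the same Poisson-lifting argument used for $\mathcal{B}^\FU$ in Theorem~\ref{thm:fulldimwellposedness}, restricted to $\Omega_\FF$ alone. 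This reduces the problem to establishing the continuity and coercivity of $\mathcal{A}^\RE$.

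For continuity \eqref{continuous1}, I would split $\mathcal{A}^\RE$ into the seven constituent pieces $\mathcal{A}_\FP$, $\mathcal{A}_\gamma$, $\mathcal{B}_\gamma$, $\mathcal{E}_\gamma$, $f_\FP$, $f_\gamma$, $g_\gamma$, and bound each by Cauchy--Schwarz together with the ellipticity bounds \eqref{equ:boundedK}, \eqref{equ:boundedbeta} and the trace inequalities \eqref{equ:traceFFTR}, \eqref{equ:tracePM}. The interface integrals involving $V_\vec{n}$, $V_{\vec\tau}$ and $P$ are estimated directly using the $L^2(\gamma)$-norms built into $\|\vec{W}\|_{H_\gamma}$ and $\|\Psi\|_{Z_\gamma}$, while integrals involving $\vec{v}_\FF\cdot\vec{n}|_{\gamma_\FF}$ or $p_\PM|_{\gamma_\PM}$ are lifted to the bulk norms via the trace theorem. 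Collecting these into a single constant $C_{\mathcal{A}^\RE}$ via the inequality \eqref{equ:quadraticrule} completes the continuity argument.

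The main obstacle is the coercivity of $\mathcal{A}^\RE$ on $\mathit{Kern}(\mathcal{B}^\RE)$, because $\mathcal{A}^\RE$ contains indefinite cross-coupling terms coming from $f_\FP$, $f_\gamma$ and $g_\gamma$ that mix the bulk unknowns $(\vec{v}_\FF,p_\PM)$ with the interface unknowns $(\vec{V},P)$. Setting $\vec\theta^\RE=\vec\zeta^\RE$, the diagonal contributions $\mathcal{A}_\FP$, $\mathcal{A}_\gamma$, $\mathcal{E}_\gamma$ produce $L^2$-norms of $\nabla\vec{v}_\FF$, $\nabla p_\PM$, $\vec V$, $\partial\vec V/\partial\vec\tau$, $P$ and traces of $\vec v_\FF$, $p_\PM$ on $\gamma$ after invoking ellipticity. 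The cross terms $f_\FP(\vec V,P;\vec v_\FF,p_\PM)$, $f_\gamma(\vec v_\FF,p_\PM;\vec V)$, $g_\gamma(\vec v_\FF,p_\PM;P)$ must be absorbed into these diagonal contributions by a generalised Young inequality $2ab\le a^2/\delta+\delta b^2$ with carefully chosen parameters $\delta,\delta_1,\delta_2,\delta_3>0$; the key pairings are $(\vec v_\FF\cdot\vec n,V_\vec{n})$, $(p_\PM,P)$, and $(\vec v_\FF\cdot\vec\tau,V_\vec{\tau})$, where the coefficient structure is symmetric enough that choosing $\delta_1=\delta_3=1$, $\delta_2=2$ cancels the boundary-trace terms of $\vec v_\FF$ exactly against the coercivity constants coming from $\mathcal{A}_\FP$ on the interface.

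The remaining obstruction is the $\|p_\PM\|_{L^2(\gamma)}^2$ term produced by Young's inequality on the $(p_\PM,P)$ pairing: using the trace bound \eqref{equ:tracePM} this becomes $\tfrac{1}{\delta}\tfrac{dC^2_{\gamma_\PM,\PM}}{\mu_\EF\lambda_1}\|p_\PM\|_{H_\PM}^2$, which must be dominated by the bulk coercivity contribution $\tfrac{k_{\min,\PM}}{\mu\tilde C_{P,\PM}}\|p_\PM\|_{H_\PM}^2$ from $\mathcal{A}_\PM$ combined with the Poincar\'e inequality \eqref{equ:auxiliarypoincare}. Picking any $\delta\in(0,1)$ (so the $P$-diagonal coefficient $(1-\delta)d/(\mu_\EF\lambda_1)$ stays positive) and then requiring
\[
d<d^*:=\frac{\delta\,\mu_\EF\lambda_1\,k_{\min,\PM}}{\mu\,\tilde{C}_{P,\PM}\,C^2_{\gamma_\PM,\PM}}
\]
guarantees positivity of the pressure coefficient and yields the coercivity constant $C^*_{\mathcal{A}^\RE}$ as the minimum of the five resulting positive quantities. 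This smallness condition on $d$ is exactly the origin of the restriction in the theorem statement; physically, it reflects that the dimensional reduction is faithful only when the transition zone is thin enough that the interface pressure trace cannot dominate its bulk energy. Having verified all four abstract conditions, Theorem~\ref{thm:wellposedness} delivers existence and uniqueness of $(\vec\zeta^\RE,p_\FF)\in\mathcal{H}^\RE\times\mathcal{Z}^\RE$, completing the proof.
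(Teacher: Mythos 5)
Your proposal does not address the statement in question. The statement is the abstract saddle--point well-posedness result (Theorem~\ref{thm:wellposedness} in the appendix, the Babu\v{s}ka--Brezzi theorem for a generic pair of Hilbert spaces $\mathcal{H}$, $\mathcal{Z}$ and generic forms $\mathcal{A}$, $\mathcal{B}$), whereas what you have written is a plan of proof for Theorem~\ref{thm:reduceddimwellposedness}, i.e.\ the \emph{application} of that abstract theorem to the dimensionally reduced Stokes--Brinkman--Darcy system. You use the appendix theorem as a black box ("Having verified all four abstract conditions, Theorem~\ref{thm:wellposedness} delivers existence and uniqueness\dots"), which is precisely the statement you were supposed to establish. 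Nothing in your text argues why continuity, coercivity on $\mathit{Kern}(\mathcal{B})$, and the inf-sup condition together imply (and are implied by) well-posedness of \eqref{equ:thmwellposed}.

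For the record, the paper does not prove this theorem either; it cites it from Girault and Raviart [Cor.~I.4.1]. A self-contained proof of the "if" direction would proceed by splitting $\mathcal{H}=\mathit{Kern}(\mathcal{B})\oplus\mathit{Kern}(\mathcal{B})^{\perp}$, observing that the inf-sup condition \eqref{inf-sup} makes the operator induced by $\mathcal{B}$ an isomorphism from $\mathit{Kern}(\mathcal{B})^{\perp}$ onto $\mathcal{Z}'$ (closed range theorem), solving for the component of $\vec\zeta$ in $\mathit{Kern}(\mathcal{B})$ by Lax--Milgram using \eqref{coecitive}, and then recovering $\vec\xi$ uniquely from \eqref{equ:thmwellposed1} via the surjectivity of the adjoint of $\mathcal{B}$; the "only if" direction requires showing that boundedness of the solution operator for arbitrary data forces \eqref{coecitive} (in the appropriate weakened form of an inf-sup condition for $\mathcal{A}$ on the kernel) and \eqref{inf-sup}. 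Your verification of the hypotheses for the concrete forms $\mathcal{A}^\RE$, $\mathcal{B}^\RE$ is a reasonable sketch of the paper's Theorem~\ref{thm:reduceddimwellposedness}, but it is not a proof of the statement at hand.
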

The inf-sup condition \eqref{inf-sup} is also called the Babu\v{s}ka--Brezzi condition~\cite{babuvska1973finite, brezzi1974existence}.


\begin{appendixtheorem}
[Trace theorem]\label{thm:tracetheorem}
Let $\Omega \subset \mathbb{R}^n$ be a bounded Lipschitz domain with boundary $\Gamma$. Then the mapping $f\to f|_{\Gamma}$  has a unique linear continuous extension as an operator from 
    $H^1(\Omega)$ onto $H^{1/2}(\Gamma).$
\end{appendixtheorem}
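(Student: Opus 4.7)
The plan is to prove the trace theorem by the classical localization approach: establish the trace estimate on the model half-space $\mathbb{R}^n_+$ via Fourier analysis, transfer it to a bounded Lipschitz domain through a partition of unity subordinated to a Lipschitz atlas of $\Gamma$, and finally construct a continuous right inverse to obtain surjectivity.

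First I would define the trace operator $\gamma_0 f:=f|_\Gamma$ on the dense subspace $\mathcal{D}(\overline\Omega):=\{u|_{\overline\Omega}:u\in C^\infty_c(\mathbb{R}^n)\}$ of $H^1(\Omega)$, density being available for Lipschitz domains via the Calderon--Stein extension theorem. On the half-space, for $f\in C^1_c(\overline{\mathbb{R}^n_+})$, the identity
\begin{equation*}
|f(x',0)|^2=-2\,\mathrm{Re}\!\int_0^\infty f(x',t)\,\overline{\partial_n f(x',t)}\,dt,
\end{equation*}
combined with the Fourier characterization $\|g\|_{H^{1/2}(\mathbb{R}^{n-1})}^2=\int(1+|\xi|^2)^{1/2}|\widehat g(\xi)|^2\,d\xi$ and Plancherel's theorem in the tangential variable $x'$, reduces the trace estimate to a pointwise one-dimensional inequality on the partial Fourier transform $\widehat f(\xi,\cdot)$. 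This yields $\|f(\cdot,0)\|_{H^{1/2}(\mathbb{R}^{n-1})}\le C\|f\|_{H^1(\mathbb{R}^n_+)}$.

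Next I would localize. Since $\partial\Omega$ is Lipschitz, cover $\Gamma$ by finitely many open sets $U_j$ on each of which a bi-Lipschitz chart $\Phi_j:U_j\cap\overline\Omega\to\mathbb{R}^n_+\cap V_j$ flattens the boundary. With a subordinated partition of unity $\{\varphi_j\}$, apply the half-space estimate to each $(\varphi_j f)\circ\Phi_j^{-1}$ and sum. Bi-Lipschitz pull-backs preserve $H^1$ and $H^{1/2}$ up to constants depending only on the atlas, giving the global estimate $\|f|_\Gamma\|_{H^{1/2}(\Gamma)}\le C\|f\|_{H^1(\Omega)}$ on the dense subspace; continuous extension then produces the bounded operator $\gamma_0:H^1(\Omega)\to H^{1/2}(\Gamma)$, which is moreover unique since $\mathcal{D}(\overline\Omega)$ is dense.

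For surjectivity I would exhibit a bounded right inverse $E:H^{1/2}(\Gamma)\to H^1(\Omega)$. On the half-space, given $g\in H^{1/2}(\mathbb{R}^{n-1})$, the harmonic extension $(Eg)(x',t):=\mathcal{F}^{-1}_{\xi\to x'}\!\bigl[e^{-t|\xi|}\widehat g(\xi)\bigr]$ satisfies $\|Eg\|_{H^1(\mathbb{R}^n_+)}\le C\|g\|_{H^{1/2}(\mathbb{R}^{n-1})}$ by a direct Fourier-side computation involving $\int_0^\infty e^{-2t|\xi|}(|\xi|^2+|\xi|^2)\,dt=|\xi|$, and by construction $\gamma_0(Eg)=g$. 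Assembling these local liftings through the same partition of unity yields the global right inverse, hence $\gamma_0$ is onto. The main obstacle is the Lipschitz (rather than smooth) regularity of $\Gamma$: one must verify that $H^{1/2}(\Gamma)$, defined locally via Slobodeckij norms transported through the charts $\Phi_j$, is independent of the atlas and compatible with bi-Lipschitz pull-backs — this consistency check, together with the boundary density of $\mathcal{D}(\overline\Omega)$ for Lipschitz $\Omega$, is the technically delicate part of the argument that distinguishes the Lipschitz case from the smooth one.
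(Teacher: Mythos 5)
The paper does not prove this statement at all: it is quoted verbatim as a standard result, with the proof delegated to the cited reference \cite[Thm.~I.1.5]{girault2012finite}. So there is no internal argument to compare yours against; what you have written is the classical textbook proof (half-space estimate by partial Fourier transform, localization through a Lipschitz atlas and partition of unity, surjectivity via an explicit lifting), and as an outline it is the right strategy and essentially sound. Your handling of the genuinely delicate points --- density of restrictions of $C^\infty_c(\mathbb{R}^n)$ functions via the Calder\'on--Stein extension, bi-Lipschitz invariance of $H^1$ and of the Slobodeckij $H^{1/2}$ norm, and atlas-independence of $H^{1/2}(\Gamma)$ --- is correctly identified even if not carried out in detail.

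One concrete flaw in the surjectivity step: the harmonic extension $(Eg)(x',t)=\mathcal{F}^{-1}_{\xi\to x'}\bigl[e^{-t|\xi|}\widehat g(\xi)\bigr]$ does \emph{not} satisfy $\|Eg\|_{H^1(\mathbb{R}^n_+)}\le C\|g\|_{H^{1/2}(\mathbb{R}^{n-1})}$ as claimed. Your Fourier computation controls only the gradient; for the $L^2$ part one gets
\begin{equation*}
\|Eg\|_{L^2(\mathbb{R}^n_+)}^2=\int_{\mathbb{R}^{n-1}}\frac{|\widehat g(\xi)|^2}{2|\xi|}\,d\xi,
\end{equation*}
which can diverge at low frequencies for $g\in H^{1/2}$ (the weight $|\xi|^{-1}$ is not dominated by $(1+|\xi|^2)^{1/2}$ near $\xi=0$). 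The standard fix is to replace $e^{-t|\xi|}$ by $e^{-t(1+|\xi|^2)^{1/2}}$, which yields $\int_0^\infty e^{-2t(1+|\xi|^2)^{1/2}}\,dt=\tfrac12(1+|\xi|^2)^{-1/2}$ and hence bounds both $\|Eg\|_{L^2}$ and $\|\nabla Eg\|_{L^2}$ by $C\|g\|_{H^{1/2}}$ while still restricting to $g$ on $t=0$; alternatively, multiply your $Eg$ by a cutoff in $t$, which is in any case needed before transporting the lifting back through the charts. With that correction the argument closes.
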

Note that Theorem~\ref{thm:tracetheorem} implies
\begin{eqnarray}
        \exists C_{\Gamma}>0 &\textnormal{s.t.}& \| f|_{\Gamma} \|_{L^2(\Gamma)} \le \|f |_{\Gamma}\|_{H^{1 / 2}(\Gamma)} \le C_{\Gamma} \|f \|_{H^1 (\Omega)}.\label{equ:tracegeneral}
\end{eqnarray}

\begin{appendixtheorem}[Poincar{\'e} theorem] \label{thm:poicare} Let the domain $\Omega_i$ for $i\in \{\FF,\TR,\PM\}$ be connected and bounded. Then there exists $C_{P,i}=C_{P,i} (\Omega_i)>0$ such that 
\begin{eqnarray}
    \| f \|_{L^2(\Omega_i)} \le  C_{P,i} \| \nabla f \|_{L^2(\Omega_i)}, \quad \forall f \in H^1_0(\Omega_i).
\end{eqnarray}
\end{appendixtheorem}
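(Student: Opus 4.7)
The plan is to verify the three Babu\v{s}ka--Brezzi hypotheses \eqref{continuous1}--\eqref{inf-sup} from Theorem~\ref{thm:wellposedness} for the bilinear forms $\mathcal{A}^\RE$ and $\mathcal{B}^\RE$ in the reduced formulation \eqref{eq:weakformulationofCoupledreducedproblem}, paralleling the structure of the proof of Theorem~\ref{thm:fulldimwellposedness} but carefully accounting for the new interface bilinear forms and the off-diagonal couplings $f_\FP$, $f_\gamma$, $g_\gamma$ induced by the transmission conditions.

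First I would dispose of $\mathcal{B}^\RE$. Since $\mathcal{B}^\RE(\vec{\zeta}^\RE;\psi_\FF)=-\int_{\Omega_\FF}(\nabla\cdot\vec{v}_\FF)\psi_\FF\,\mathrm{d}\vec{x}$ depends only on $\vec{v}_\FF$ and $\psi_\FF$, its continuity and inf-sup stability reduce to the corresponding statement for the Stokes divergence, which we establish by solving the auxiliary Poisson problem~\eqref{equ:Poisson} in $\Omega_\FF$ alone (embedding its gradient as $({\vec y}_\FF,0,\vec 0,0)\in\mathcal{H}^\RE$ yields exactly the same inf-sup constant $\tilde C_{P,\FF}^{-1/2}$). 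Next I would prove continuity of $\mathcal{A}^\RE$ term-by-term. Splitting $\mathcal{A}^\RE$ into its constituents $\mathcal{A}_\FP$, $\mathcal{A}_\gamma$, $\mathcal{B}_\gamma$, $\mathcal{E}_\gamma$, $f_\FP$, $f_\gamma$, $g_\gamma$, one bounds each using Cauchy--Schwarz, the uniform ellipticity \eqref{equ:boundedK}--\eqref{equ:boundedbeta}, and the trace inequalities \eqref{equ:traceFFTR}--\eqref{equ:tracePM} to convert $L^2(\gamma)$ norms of $\vec{v}_\FF$ and $p_\PM$ into $H_\FF$ and $H_\PM$ norms. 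Combining these estimates with the quadratic inequality~\eqref{equ:quadraticrule} yields a single constant $C_{\mathcal{A}^\RE}$ bounding $|\mathcal{A}^\RE(\vec{\zeta}^\RE;\vec{\theta}^\RE)|$ by $\|\vec{\zeta}^\RE\|_{\mathcal{H}^\RE}\|\vec{\theta}^\RE\|_{\mathcal{H}^\RE}$.

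The main obstacle is coercivity of $\mathcal{A}^\RE$ on $\mathit{Kern}(\mathcal{B}^\RE)$. Testing with $\vec{\theta}^\RE=\vec{\zeta}^\RE$ one observes that the symmetric diagonal parts $\mathcal{A}_\FP$, $\mathcal{A}_\gamma$, $\mathcal{E}_\gamma$ produce the desired positive quadratic contributions in $\|\nabla\vec{v}_\FF\|_{L^2(\Omega_\FF)}^2$, $\|\nabla p_\PM\|_{L^2(\Omega_\PM)}^2$, $\|\vec{V}\|_{H_\gamma}^2$ and $\|P\|_{L^2(\gamma)}^2$ (after invoking the Poincar\'e inequality~\eqref{equ:auxiliarypoincare}). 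The obstruction lies in the cross terms $-f_\FP(\vec{V},P;\vec{v}_\FF,p_\PM)-f_\gamma(\vec{v}_\FF,p_\PM;\vec{V})+g_\gamma(\vec{v}_\FF,p_\PM;P)$, which are sign-indefinite and mix variables across the subdomains and the interface. I would control them by applying the generalised Young inequality $2ab\le a^2/\delta+\delta b^2$ with a carefully chosen set of weights $\delta,\delta_1,\delta_2,\delta_3>0$, so that each cross term splits into a piece absorbed by the interface positivity (traces of $\vec{v}_\FF$ on $\gamma$ and $V_{\vec n},V_{\vec\tau}$ on $\gamma$) and a piece proportional to $\|p_\PM\|_{L^2(\gamma)}^2$ or $\|P\|_{L^2(\gamma)}^2$. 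The term $\|P\|^2_{L^2(\gamma)}$ is directly absorbed by $\mathcal{E}_\gamma$ provided $\delta<1$, while the $\|p_\PM\|_{L^2(\gamma)}^2$ piece must be passed through the trace inequality \eqref{equ:tracePM} to become $C_{\gamma_\PM,\PM}^2\|p_\PM\|_{H_\PM}^2$, which is absorbed by the Darcy coercivity only when the prefactor $(dC_{\gamma_\PM,\PM}^2)/(\delta\mu_\EF\lambda_1)$ is strictly smaller than $k_{\min,\PM}/(\mu\tilde C_{P,\PM})$.

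Solving this inequality for $d$ yields the required threshold
\begin{equation*}
d^*:=\frac{\delta\,\mu_\EF\lambda_1\,k_{\min,\PM}}{\mu\,\tilde C_{P,\PM}\,C_{\gamma_\PM,\PM}^2},\qquad \delta\in(0,1),
\end{equation*}
and for any $d\in(0,d^*)$ one obtains a positive coercivity constant
\begin{equation*}
C^*_{\mathcal{A}^\RE}=\min\left\{\tfrac{\mu}{\tilde C_{P,\FF}},\;\tfrac{k_{\min,\PM}}{\mu\tilde C_{P,\PM}}-\tfrac{dC_{\gamma_\PM,\PM}^2}{\delta\mu_\EF\lambda_1},\;\tfrac{d\mu}{k_{\max,\TR}},\;d\mu_\EF,\;\tfrac{(1-\delta)d}{\mu_\EF\lambda_1}\right\}>0.
\end{equation*}
With all three Babu\v{s}ka--Brezzi conditions in hand, Theorem~\ref{thm:wellposedness} delivers existence and uniqueness of $(\vec{\zeta}^\RE,p_\FF)\in\mathcal{H}^\RE\times\mathcal{Z}^\RE$. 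The non-trivial calibration of the Young weights to keep every quadratic contribution strictly positive is the delicate step; once this balancing is achieved, the remaining verifications are routine extensions of the full-dimensional argument.
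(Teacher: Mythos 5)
Your proposal does not prove the statement it was asked to prove. The statement is the Poincar\'e inequality $\|f\|_{L^2(\Omega_i)}\le C_{P,i}\|\nabla f\|_{L^2(\Omega_i)}$ for all $f\in H^1_0(\Omega_i)$ on a bounded connected domain --- a classical auxiliary result that the paper simply quotes from the literature (Girault--Raviart, Thm.~I.1.1) without proof. What you have written instead is a sketch of the proof of Theorem~\ref{thm:reduceddimwellposedness} (well-posedness of the dimensionally reduced model): verification of the Babu\v{s}ka--Brezzi conditions for $\mathcal{A}^\RE$ and $\mathcal{B}^\RE$, the Young-inequality balancing of the cross terms $f_\FP$, $f_\gamma$, $g_\gamma$, and the derivation of the thickness threshold $d^*$. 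That argument in fact \emph{uses} the Poincar\'e inequality (via \eqref{equ:auxiliarypoincare}) as an ingredient, so it cannot serve as a proof of it; taken as a proof of the stated theorem it is circular and, more fundamentally, simply off-target.

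A correct proof of the actual statement would proceed along entirely different, standard lines: by density it suffices to treat $f\in C_c^\infty(\Omega_i)$; extend $f$ by zero to a strip $\mathbb{R}^{n-1}\times(a,b)$ containing the bounded domain $\Omega_i$; write $f(x',x_n)=\int_a^{x_n}\partial_n f(x',t)\,\mathrm{d}t$ by the fundamental theorem of calculus; apply the Cauchy--Schwarz inequality to get $|f(x',x_n)|^2\le (b-a)\int_a^b|\partial_n f(x',t)|^2\,\mathrm{d}t$; and integrate over the strip to obtain $\|f\|_{L^2}^2\le (b-a)^2\|\partial_n f\|_{L^2}^2\le (b-a)^2\|\nabla f\|_{L^2}^2$, so $C_{P,i}=b-a$ works (connectedness is not even needed for the $H^1_0$ version). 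Alternatively one can argue by contradiction using the Rellich--Kondrachov compact embedding. None of this machinery appears in your proposal.
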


Taking $\|f\|_{H^1(\Omega_i)} ^2 = \|f\|_{L^2(\Omega_i)} ^2 +\|\nabla f\|_{L^2(\Omega_i)} ^2 $ into account, we get
\begin{eqnarray}
    \|f\|_{H^1(\Omega_i)} ^2 \le \tilde{C}_{P,i} \|\nabla f\|_{L^2(\Omega_i)} ^2, \quad \tilde{C}_{P,i}=1+C_{P,i}^2>1. \label{equ:auxiliarypoincare}
\end{eqnarray}

\renewcommand{\thesection}{\appendixname\ \Alph{section}}

\section*{Acknowledgement}
\vspace{-2ex}
The work is funded by the Deutsche Forschungsgemeinschaft (DFG, German Research Foundation) -- Project Number 490872182 and Project Number 327154368~-- SFB 1313.
\vspace{-3ex}
\section*{Data availability}
\vspace{-2ex}
The datasets generated and analysed during the current study are 
available from the corresponding author on reasonable request.
\vspace{-3ex}
\section*{Declaration of interest}
\vspace{-2ex}
The authors have no competing interests to declare that are relevant to the content of this paper.
\vspace{-3ex}
\section*{Declaration of generative AI and AI-assisted technologies in the writing process}
\vspace{-2ex}
During the preparation of this work, the authors did not use any generative AI and AI-assisted technologies.

\bibliographystyle{elsarticle-num} 
\bibliography{FLUPOR}


\end{document}